\newtheorem{theorem}{Theorem}[subsection]
\newtheorem*{theoremp}{Theorem}
\newtheorem{lemma}[theorem]{Lemma}
\newtheorem{claim}[theorem]{Claim}
\newtheorem{corollary}[theorem]{Corollary}
\newtheorem{example}[theorem]{Example}
\newtheorem*{problem}{Problem}
\newtheorem{definition}{Definition}
\def\rr{\mathds{R}}
\DeclareMathOperator{\conv}{conv}
\DeclareMathOperator{\vol}{vol}
\DeclareMathOperator{\diam}{diam}
\title{Quantitative combinatorial geometry for concave functions}
\author[Sarkar]{Sherry Sarkar}
\address{Georgia Institute of Technology, North Ave NW, Atlanta, GA 30332, United States} 
\email{ssarkar44@gatech.edu}
\author[Xue]{Alexander Xue}
\address{Cornell University, Ithaca, NY 14850, United States} 
\email{ajx3@cornell.edu}
\author[Sober\'on]{Pablo Sober\'on}\address{Baruch College, City University of New York, One Bernard Baruch Way, New York, NY 10010, United States} 
\email{pablo.soberon-bravo@baruch.cuny.edu}
\thanks{
This research project was done as
part of the 2019 CUNY Combinatorics REU, supported by NSF awards
DMS-1802059 and DMS-1851420.  Sober\'on's research is also supported by PSC-CUNY grant 62639.}
\begin{document}

\maketitle

\begin{abstract}
We prove several exact quantitative versions of Helly's and Tverberg's theorems, which guarantee that a finite family of convex sets in $\rr^d$ has a large intersection.  Our results characterize conditions that are sufficient for the intersection of a family of convex sets to contain a ``witness set'' which is large under some concave or log-concave measure.  The possible witness sets include ellipsoids, zonotopes, and $H$-convex sets.  Our results show that several new optimization problems can be solved with algorithms for LP-type problems.  We obtain colorful and fractional variants of all our Helly-type theorems.
\end{abstract}

\noindent {\em Keywords: Helly's theorem, Tverberg's theorem, Convex bodies, Minkowski sum, Ellipsoids}
\tableofcontents

\section{Introduction}

The study of the intersection patterns of convex sets is a substantial part of combinatorial geometry.  Helly's theorem and Tverberg's theorem are among the best-known results of this area.  Helly's theorem says that \textit{given a finite family of convex sets in $\rr^d$, if every $d+1$ or fewer sets have non-empty intersection, then the whole family has non-empty intersection} \cite{Helly:1923wr}.  Tverberg's theorem, on the other hand, says that \textit{given $(r-1)(d+1)+1$ points in $\rr^d$, there exists a partition of them into $r$ parts whose convex hulls intersect} \cite{Tverberg:1966tb}.  Many generalizations and extensions of Helly's and Tverberg's theorems have been proven, with classical examples including colorful, topological, and integer versions for both theorems \cite{Amenta:2017ed, Holmsen:2017uf, Blagojevic:2017bl, Barany:2018fy, DeLoera:2019jb}.

A particular family of generalizations of both theorems, called the \textit{quantitative} versions, give conditions that guarantee that the intersection of a family of convex sets in $\rr^d$ is large.  For example, we can ask for bounds on the volume of the intersection of a family of convex sets.

\begin{theoremp}[B\'ar\'any, Katchalski, Pach 1982 \cite{Barany:1982ga}]
	Let $\mathcal{F}$ be a finite family of convex sets in $\rr^d$.  If the intersection of every $2d$ or fewer sets in $\mathcal{F}$ has volume at least one, then the volume of $\cap \mathcal{F}$ is at least $d^{-2d^2}$.
\end{theoremp}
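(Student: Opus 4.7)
The plan is to show first that $K := \bigcap \mathcal{F}$ has positive volume, and then to lower-bound that volume by finding a small subfamily of $\mathcal{F}$ whose intersection is not too much larger than $K$ itself, so that the volume-one hypothesis translates directly into a lower bound on $\vol(K)$.

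For non-emptiness, Helly's theorem applied with parameter $d+1 \leq 2d$ immediately gives $K \neq \emptyset$. To upgrade this to $\vol(K) > 0$, I would argue by contradiction: if $K$ were contained in an affine hyperplane $H$, then for any $\epsilon > 0$ the open $\epsilon$-slab around $H$ contains $K$, so adjoining to $\mathcal{F}$ either of the two closed halfspaces on the far side of this slab produces a family with empty intersection. Helly then extracts $d$ members of $\mathcal{F}$ lying entirely on one side, and $d$ members on the other side; the resulting $2d$-fold intersection is contained in a slab of width $2\epsilon$, which contradicts the volume-one hypothesis once $\epsilon$ is small and one restricts attention to a bounded region (e.g.\ by intersecting further with a large ball coming from a separate Helly argument).

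To lower-bound $\vol(K)$, I would apply an affine transformation that places the maximum-volume ellipsoid inscribed in $K$ at the standard unit ball $B$; by John's theorem this gives $B \subseteq K \subseteq dB$. Since the affine transformation scales all volumes (including that of the $2d$-wise intersections) by the same factor, I may work in the normalized coordinates and then translate back. The next step is to select at most $2d$ members $C_1,\dots, C_{2d} \in \mathcal{F}$ whose intersection $P$ is a good outer approximation of $K$. For each of the $2d$ coordinate directions $\pm e_i$, the supporting halfspace of $K$ in that direction lies at signed distance at most $d$ from the origin; a limiting/compactness argument on $\mathcal{F}$ produces $C_{\pm e_i} \in \mathcal{F}$ whose corresponding supporting halfspace matches $K$'s up to arbitrarily small error in each of these $2d$ directions. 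The intersection $P$ of the selected $2d$ sets is then trapped inside a coordinate box sitting in $[-d,d]^d$.

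The main obstacle is the final volume bookkeeping, because a naive comparison with the containing box of volume $(2d)^d$ only yields a bound of the form $\vol(K) \geq d^{-O(d)}$, which is far stronger than $d^{-2d^2}$ and therefore cannot be what the argument is actually producing; the weaker target bound suggests that one cannot realize the supporting halfspaces of $K$ well enough in each axis direction in a single pass. I would therefore proceed recursively: use the pair $C_{\pm e_1}$ to trap $K$ in a slab, then slice $K$ by a hyperplane through the center of $B$, and apply the inductive hypothesis in one lower dimension to the slice, accumulating a volume-loss factor of roughly $d^{2d}$ at each of the $d$ levels. Handling this recursion cleanly — especially matching the supporting halfspaces of $K$ with genuine members of $\mathcal{F}$ at every level, and tracking the normalization back through the affine transformation at the end — is where the technical weight of the argument lies and is what delivers the specific exponent $2d^2$.
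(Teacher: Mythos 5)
This theorem is stated in the paper only as background; it is attributed to B\'ar\'any, Katchalski, and Pach and not reproved there, so there is no proof in the paper to compare against. Judged on its own, your sketch has the right high-level shape (positive volume of $K = \cap\mathcal{F}$, then John position, then a $2d$-member subfamily that cannot overshoot $K$ too much), but there is a genuine gap at the crucial step.

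The step that fails is the claim that a compactness argument produces, for each coordinate direction $\pm e_i$, a member $C_{\pm e_i}\in\mathcal{F}$ whose supporting halfspace in that direction matches that of $K$ up to small error. This is false in general: the point of $K$ extreme in direction $e_i$ is typically a lower-dimensional face where several members of $\mathcal{F}$ meet, and none of them has $e_i$ as (or near) its own outer normal there. In $\rr^2$, take $K$ a triangle given as the intersection of three halfplanes: the supporting line of $K$ with normal $e_1$ passes through a vertex and is not (even approximately) the bounding line of any of the three halfplanes. Consequently the intersection $P$ of your chosen $2d$ sets is \emph{not} trapped in a coordinate box near $[-d,d]^d$, and the naive volume comparison collapses. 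You sense this yourself when you observe that the naive bound would be far better than $d^{-2d^2}$; the correct reading of that observation is not ``the argument must be lossier,'' but ``the matching claim is simply not available.'' (In fact B\'ar\'any--Katchalski--Pach's $d^{-2d^2}$ is an artifact of their proof, and Nasz\'odi and Brazitikos later obtained $d^{-O(d)}$ by different means, so the weak exponent is not forced by the problem.)

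The recursive fix you propose does not repair this. Slicing $K$ by a hyperplane does produce a lower-dimensional convex body, but the hypothesis you would need in dimension $d-1$ --- that every $2(d-1)$ of the sliced sets have $(d-1)$-volume bounded below --- does not follow from the $2d$-wise $d$-volume hypothesis on $\mathcal{F}$, so the induction has no base to stand on. You would also need to justify that the slab in direction $e_1$ can be cut off by genuine members of $\mathcal{F}$, which is the same difficulty as above in one direction. Separately, the positive-volume preliminary also has a gap: after using the two auxiliary halfspaces $H^{\pm}$ and Helly to extract $d$ members of $\mathcal{F}$ whose intersection avoids $H^+$ and $d$ avoiding $H^-$, the resulting $2d$-fold intersection lies in a thin slab but could be unbounded and hence still have infinite volume. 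To finish you would need a separate boundedness argument applied to the \emph{same} $2d$ sets, and ``intersect with a large ball'' does not do this, because those intersections are no longer members of $\mathcal{F}$, so the $2d$-wise hypothesis does not apply to them. The standard proofs handle these points by first proving a purely geometric lemma about a convex body written as an intersection of halfspaces (find $2d$ of the halfspaces whose intersection is contained in a bounded dilate of the body), and then deducing the theorem; your outline skips to the deduction without establishing a usable version of that lemma.
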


One can easily show that we cannot expect to conclude that the volume of $\cap \mathcal{F}$ is at least one if $d \ge 2$, so there is no \textit{exact} Helly theorem for the volume.  The lower bound for the volume of $\cap \mathcal{F}$ has been improved recently.  First by Nasz\'odi, giving a bound of $d^{-2d+o(1)}$ \cite{Naszodi:2016he} and then by Brazitikos, giving a bound of $d^{-3d/2 +o(1)}$ \cite{Brazitikos:2017ts}.  If we know that the intersection of subfamilies of larger cardinality, $\alpha d$ for some constant $\alpha$, have volume greater than or equal to one, Brazitikos showed that we can get a lower bound of $d^{-d+o(1)}$ for the volume of $\cap \mathcal{F}$ \cite{Brazitikos:2017ts}.  If one is willing to check much larger subfamilies, it was shown that we can get a bound of $1-\varepsilon$ on the volume of $\cap \mathcal{F}$ if we know that the intersection of every $\Theta(\varepsilon^{-(d-1)/2})$ sets has volume at least one \cite{DeLoera:2017gt}.

Quantitative Tverberg theorems are much more recent, and there are several interpretations of what the correct version should be.  We consider versions as in \cite{Soberon:2016co}.  In those variations, we replace points by convex sets, and we seek a partition of the family such that the intersection of the convex hulls of the parts is large.  As an analog for the B\'ar\'any-Katchalski-Pach theorem, we obtain the following example, which we prove in Section \ref{section-tverberg}.  The number of sets needed can be reduced slightly if $r$ is a prime power.

\begin{theorem}[Tverberg for volume]\label{theorem-tverbergexample} Let $r, d$ be positive integers and $\mathcal{F}$ be a family of $(r-1)\left(\frac{d(d+3)}{2}+1\right) + 1$ sets of volume one in $\rr^d$.  Then, there exists a partition of $\mathcal{F}$ into $r$ parts $\mathcal{A}_1, \ldots, \mathcal{A}_r$ such that the volume of $\bigcap_{j=1}^r \conv \left( \cup \mathcal{A}_j\right)$ is at least $d^{-d}$.
\end{theorem}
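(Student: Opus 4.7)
The plan is to reduce Theorem \ref{theorem-tverbergexample} to a quantitative Tverberg statement about ellipsoidal witnesses, combined with John's theorem to convert the volume hypothesis into ellipsoid containment. The crucial numerical fact is that $d(d+3)/2+1$ is exactly the Helly number associated to the property ``contains an ellipsoid of prescribed volume'': an ellipsoid in $\rr^d$ is specified by a center ($d$ parameters) together with a symmetric positive definite matrix ($d(d+1)/2$ parameters), so its parameter space has dimension $d(d+3)/2$, and the problem ``maximize the volume of an ellipsoid lying inside the intersection of a family of convex sets'' is LP-type with this combinatorial dimension, giving Helly number $h := d(d+3)/2+1$.

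First I would establish an ellipsoidal Helly-type theorem: if every $h$ members of a family of convex sets in $\rr^d$ have intersection containing an ellipsoid of volume $\geq v$, then the whole family does. This is a direct application of the LP-type Helly theorem once one verifies that maximizing ellipsoid volume under convex containment constraints is LP-type with combinatorial dimension $d(d+3)/2$. The bridge from Theorem \ref{theorem-tverbergexample}'s volume hypothesis to this Helly theorem is John's theorem, which guarantees that every convex set of volume $\geq 1$, and hence every $\conv(\cup \mathcal{A})$ with $\mathcal{A}\subseteq\mathcal{F}$ non-empty, contains an ellipsoid of volume at least $d^{-d}$.

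The main step is the Tverberg-type transfer, responsible for the count $(r-1)h+1$. Following the colorful-to-Tverberg pipeline of \cite{Soberon:2016co}, I would prove a colorful version of the ellipsoidal Helly theorem with $h$ color classes and then extract, from $(r-1)h+1$ convex sets, a partition into $r$ parts $\mathcal{A}_1,\ldots,\mathcal{A}_r$ whose convex hulls $\conv(\cup \mathcal{A}_j)$ share a common ellipsoid of volume at least $d^{-d}$. This common ellipsoid is contained in $\bigcap_{j=1}^r \conv(\cup \mathcal{A}_j)$, so the intersection has volume at least $d^{-d}$, as desired.

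The step I expect to be the main obstacle is the Tverberg transfer itself. The parameter space of ellipsoids is a semialgebraic cone cut out by positive definiteness rather than a linear subspace, so the LP-type framework and its colorful Helly companion have to be adapted carefully to this nonlinear setting. Moreover, the remark that the count improves when $r$ is a prime power suggests the transfer is implemented topologically --- most likely via a configuration-space or topological-Tverberg argument in the $d(d+3)/2$-dimensional ellipsoid parameter space --- and showing that this argument is compatible with the ellipsoid-volume objective is the technical heart of the proof.
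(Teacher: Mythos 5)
Your skeleton has the right ingredients — the parameter count $d(d+3)/2$ for ellipsoids, John's theorem supplying the $d^{-d}$ factor, and the intuition that Tverberg with $(r-1)(d(d+3)/2+1)+1$ sets should be applied ``in ellipsoid parameter space'' — but the Tverberg mechanism itself, which you correctly identify as the heart of the matter, is left as a hand-wave and the route you sketch for it is not the one that works here. You propose a ``colorful-Helly-to-Tverberg pipeline,'' but such a pipeline is not what the cited source does, and you never say how a colorful Helly theorem with $h$ color classes would actually produce an $r$-partition of $(r-1)h+1$ sets with intersecting hulls. The paper's proof of Theorem \ref{theorem-tverbergexample} is, in fact, far more direct: it parametrizes ellipsoids by pairs $(a,A) \in \rr^d \times \mathcal{P}_d$ via $D(a,A) = a + A B_d$, observes that this parametrization satisfies $D(\conv F) \subset \conv D(F)$ (a consequence of the convexity of $S_K(M) = \{(a,A): a+AK \subset M\}$ proved in Section \ref{example-matrices}), and that $\det$ is log-concave, hence $\vol \circ D$ is min-concave. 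Then it simply lifts each ellipsoid to a point in $\rr^{d(d+3)/2}$ and applies the ordinary affine Tverberg theorem there; the Tverberg partition in parameter space pushes down through $D$ to a partition whose hulls share an ellipsoid of volume one. John's theorem then converts ``volume one'' into ``contains an ellipsoid of volume $d^{-d}$'' and back. No colorful Helly theorem, no LP-type machinery, and no topological argument is needed.

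Your worry about positive definiteness being a nonlinear constraint is a red herring: the affine Tverberg theorem is applied in the ambient linear space $\rr^d \times \operatorname{Sym}(d)$, and the positive definite cone is convex, so the Tverberg point (a convex combination of the lifted parameters) automatically lies in it and the log-concavity of $\det$ on that cone guarantees the witness ellipsoid has the right volume. Your remark about prime powers is also misdirected for this particular statement: the prime-power improvement (Theorem \ref{theorem-tverberg-topological-ellipsoids}) is a \emph{different}, sharper count of $(r-1)\frac{d(d+3)}{2}+1$, obtained by the topological Tverberg theorem; the count $(r-1)\left(\frac{d(d+3)}{2}+1\right)+1$ in Theorem \ref{theorem-tverbergexample} requires nothing beyond the classical affine Tverberg theorem and holds for all $r$.
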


Quantitative Helly and Tverberg theorems have been proven for other continuous functions, such as diameter or surface area \cite{Brazitikos:2016ja, Soberon:2016co, Rolnick:2017cm}.  In both cases, we have an unavoidable loss, similar to their volumetric versions.  There are few cases for which there is an \textit{exact} quantitative theorem, such as a Helly theorem for inradius.  However, that case follows directly from Helly for containing translates of a set, which is a common exercise.  A version of Tverberg for the inradius as Theorem \ref{theorem-tverbergexample} also follows trivially from applying Tverberg's theorem to the set of centers of the incircles of the sets.  Exact quantitative Helly and Tverberg theorems have been proven for discrete functions over the convex sets, such as ``\textit{the number of points with integer coordinates in the set}'' \cite{Aliev:2016il, DeLoera:2017bl, Averkov:2017ge, DeLoera:2017th}.

In this manuscript, we present new families of quantitative Helly and Tverberg theorems that have exact versions for continuous functions.  Most of our theorems extend to colorful versions.  The simplest way to state our results is that we can obtain exact quantitative theorems for continuous functions as long as we impose conditions on the sets that witness the desired property.  For example, we obtain such theorems for the properties ``\textit{containing ellipsoids of large volume}'' and ``\textit{containing zonotopes of large Gaussian measure}''.

Quantitative Helly theorems can be considered as a bridge between combinatorial geometry and analytic convex geometry.  The results of Nasz\'odi and Brazitikos show how they are related to the sparsification of John decompositions of the identity \cite{Naszodi:2016he, Brazitikos:2017ts, Brazitikos:2016ja}. The results of De Loera, La Haye, Rolnick, and Sober\'on show how they are related to the theory of approximation of convex sets by polytopes \cite{DeLoera:2017gt}.  The results of Rolnick and Sober\'on show how the colorful versions are related to the analytic properties of ``floating bodies'' \cite{Rolnick:2017cm}.  We continue this trend and show how our exact quantitative Helly theorems are related to the study of concave functions and Minkowski sums.  Some of our results use the topological versions of Helly's theorem and of Tverberg's theorem in their proofs.  Topological methods have not been used before for quantitative variations.

Our results depend on two main components: the function we work with, and the family of sets we use to witness that we achieve a desired value in the function.  The Helly numbers (i.e., the size of the subfamilies we must check) in our results are determined by the dimension of the space of possible witness sets, and they are often optimal.  Our Tverberg theorems have a similar dependence.  This gives an intuitive idea of why the loss of volume is unavoidable in the B\'ar\'any-Katchalski-Pach theorem: the space of convex sets in $\rr^d$ has infinite dimension.  We obtain results for a wide range of functions.  It's important to note that just finding good families of witness sets is not enough.  Otherwise, we would be able to obtain exact quantitative results for the diameter, as a segment always realizes it.  This would contradict the examples presented previously by the third author \cite{Soberon:2016co}.  We do obtain some exact quantitative results for the diameter under $\ell_1$-norm instead of $\ell_2$-norm, which we discuss in sections \ref{secion-more-ellipsoids} and \ref{section-affinetverberg}.

Our results can be split into two groups:

\begin{itemize}
	\item \textbf{Results with a geometric proof.}  Several of our results can boil down to standard combinatorial geometry theorems in higher-dimensional spaces.  For our parametrizations to work, we need strong conditions on the sets that witness a large intersection.  These results apply to a large family of functions, which includes all log-concave measures in $\rr^d$.  Moreover, in the cases that this framework applies, we get versions of almost every known variation of Helly and Tverberg's theorems, including quantitative $(p,q)$-type results \cite{Alon:1992gb}.
	\item \textbf{Results with a topological proof.} A simple contractibility argument allows us to reduce many quantitative Helly-type results to Kalai and Meshulam's topological colorful Helly theorem \cite{Kalai:2005tb}.  These results apply to a broad family of possible witness sets, at the cost of a reduced family of functions.  The topological properties of the spaces of witness sets allow us to obtain smaller Helly numbers.  The related Tverberg-type results can be proved with the topological version of Tverberg's theorem \cite{Barany:1981vh, Oza87, Volovikov:1996up}.  In those cases, we require some parameters to be prime powers.
\end{itemize}

Both cases are general enough to contain the volume as the target function.  We show that the topological colorful Helly theorem by Kalai and Meshulam has applications to purely geometric Helly-type problems.  This had been observed before for Carath\'eodory-type theorems \cite{Holmsen:2017iw}.  We first prove all our Helly-type results in sections \ref{section-geometric} and \ref{section-topological}.  Then, we show how the methods extend to Tverberg's theorem in Section \ref{section-tverberg}.

We present some volumetric Helly theorems in this section since they are the easiest to compare with previous quantitative Helly theorems.  First, let us introduce matroids.  There are plenty of equivalent definitions for matroids \cite{Oxley:2006uz}.  Given a finite set $V$ of vertices, we say a \textit{matroid} or matroidal complex $M$ on $V$ is a family of subsets of $V$ with three properties.
\begin{itemize}
	\item $\emptyset \in M$.
	\item If $A \subset B$ and $B \in M$, then $A \in M$.
	\item If $A, B \in M$ and $B$ has more elements than $A$, there exists an element $a \in B\setminus A$ such that $A \cup \{a\} \in M$.
\end{itemize}

We call the sets in $M$ \textit{independent}.  For a subset $V' \subset V$, we denote by $\rho(V')$ the rank of $V'$, which is the cardinality of the largest independent set contained in $V'$.

\begin{theorem}[Matroid Helly for ellipsoids of volume one]\label{theorem-colorfulhellyellipsoids}
Let $M$ be a matroid on a set $V$ of vertices with rank function $\rho$. For each $v$ in $V$, we are given a convex set $F_v$ in $\rr^d$.  We know that for each set $V' \in M$, there exists an ellipsoid of volume one contained in $\cap_{v \in V'} F_v$.  Then, there exists a set $\tau \subset V$ such that $\rho (V \setminus \tau) \le d(d+3)/2-1$ and for which there exists an ellipsoid of volume one contained in $\cap_{v \in \tau} F_v$.
\end{theorem}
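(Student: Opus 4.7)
The plan is to translate the problem into a matroid Helly statement for convex sets in a higher-dimensional Euclidean space. I parametrize ellipsoids in $\rr^d$ by pairs $(A,c)$ with $c \in \rr^d$ and $A$ a symmetric positive semi-definite $d \times d$ matrix, via $E(A,c) := c + AB^d$, where $B^d$ is the Euclidean unit ball. This identifies the space of ellipsoids with a subset of $\rr^{d(d+1)/2} \times \rr^d \cong \rr^N$, where $N := d(d+3)/2$. Note that $N$ matches exactly the Helly-type bound in the conclusion, which is what justifies the choice.

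For each $v \in V$, define
\[
K_v := \{(A,c) : A \text{ symmetric positive semi-definite},\; E(A,c) \subset F_v,\; \vol E(A,c) \ge 1\} \subset \rr^N.
\]
The crucial step is to verify that each $K_v$ is convex. Positive semi-definiteness is a convex condition on the symmetric matrices. The containment $c + A B^d \subset F_v$ rewrites, via support functions, as $\|A u\| + \langle c, u \rangle \le h_{F_v}(u)$ for every unit vector $u$; each of these inequalities is convex in $(A,c)$ since $A \mapsto \|Au\|$ is convex and the other terms are linear. Finally, $\vol E(A,c) = (\det A)\cdot \vol B^d$, and by the Minkowski determinant inequality $A \mapsto (\det A)^{1/d}$ is concave on the positive semi-definite cone, so the volume constraint carves out a convex super-level set.

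Under this translation, the hypothesis that every independent $V' \in M$ admits an ellipsoid of volume one inside $\bigcap_{v \in V'} F_v$ becomes precisely $\bigcap_{v \in V'} K_v \neq \emptyset$. I would then invoke the matroid version of Helly's theorem for convex sets in $\rr^N$, which is the combinatorial workhorse of the ``geometric proof'' results advertised in the introduction, to produce $\tau \subset V$ with $\rho(V \setminus \tau) \le N - 1 = d(d+3)/2 - 1$ and a point $(A,c) \in \bigcap_{v \in \tau} K_v$. The corresponding $E(A,c)$ is an ellipsoid of volume at least one contained in $\bigcap_{v \in \tau} F_v$, and shrinking it homothetically about its center yields one of volume exactly one.

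The main obstacle I anticipate is the convexity verification of $K_v$ and, relatedly, getting the parametrization right: using instead the quadratic-form parametrization $\{x : (x-c)^T Q (x-c) \le 1\}$ would make the volume condition easy but the containment condition awkward, while the Minkowski-sum parametrization $(A,c) \mapsto c + AB^d$ makes both convex conditions fit together. Once the convex bodies $K_v$ are constructed, the argument reduces to a single black-box application of matroid Helly in dimension $N$.
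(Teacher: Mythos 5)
Your construction of the convex sets $K_v \subset \rr^N$, $N = d(d+3)/2$, is correct: the positive (semi)definiteness constraint, the containment constraint written via support functions as $\|Au\| + \langle c,u\rangle \le h_{F_v}(u)$, and the volume constraint via concavity of $(\det A)^{1/d}$ are all convex, so $K_v$ is convex. This is precisely the geometric parametrization the paper discusses in its Section 2.3. The problem is the final step. A matroid (Kalai--Meshulam-type) Helly theorem for convex sets living in $\rr^N$ produces $\tau$ with $\rho(V\setminus\tau) \le N = d(d+3)/2$, not $\le N-1$: the nerve of a family of convex sets in $\rr^N$ is only guaranteed to be $N$-Leray. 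So your argument yields a Helly number of $d(d+3)/2 + 1$, one larger than the theorem claims, and you cannot simply assert $\rho(V\setminus\tau) \le N-1$ from that black box.

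This off-by-one gap is not cosmetic. The paper explicitly remarks, after sketching exactly your argument, that it ``requires the number of color classes to be $n=\frac{d(d+3)}{2}+1$,'' and that they were unable to shave one dimension off by the coordinate-elimination trick that works for zonotopes and $H$-convex sets. To get the stated bound they switch to a topological route: restrict to the codimension-one slice $\mathcal{K}=\{(a,A): \det A = 1\}$ (dimension $N-1$), show that the fiber sets $S_K(M,\vol=1)$ are \emph{contractible} but not convex (Lemma \ref{lemma-contractible1}), establish that $\mathcal{K}$ is contractible so $\tilde H_{N-1}(\mathcal{K})=0$ (Corollary \ref{corollary-leray}), and then combine the nerve lemma with Lemma \ref{lemma-leray} to conclude the nerve is $(N-1)$-Leray, at which point the Kalai--Meshulam theorem gives $\rho(V\setminus\tau) \le N-1$. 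The lower-bound construction in the proof of Corollary \ref{corollary-colorfulhellyellipsoids} (John-ellipsoid contact points) shows $N-1$ is tight, so the extra $-1$ genuinely matters and cannot be recovered from the convexity of $K_v$ alone. In short, your reduction is the right starting point but proves a strictly weaker statement; you need the contractibility/Leray-number argument on the determinant-one slice to hit the advertised bound.
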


The result above can be extended further.  We present a generalization in Theorem \ref{theorem-strong-matroid-affine} which is highly malleable.  We use it to show variations of Theorem \ref{theorem-colorfulhellyellipsoids} in sections \ref{subsection-affine-images-revisited} and \ref{secion-more-ellipsoids}.  Those have different Helly numbers, depending on restrictions to the space of ellipsoids considered.  We also have versions for minimal enclosing ellipsoids, or for taking the sum of the lengths of the axes instead of the volume.  If we pick a partition matroid in Theorem \ref{theorem-colorfulhellyellipsoids}, we obtain the following colorful version.

\begin{corollary}[Colorful Helly for ellipsoids of volume one]\label{corollary-colorfulhellyellipsoids}
	Let $n=\frac{d(d+3)}{2}$ and $\mathcal{F}_1, \ldots, \mathcal{F}_n$ be finite families of convex sets in $\rr^d$.  Suppose that for every choice $F_1 \in \mathcal{F}_1, \ldots, F_n \in \mathcal{F}_n$, their intersection contains an ellipsoid of volume one.  Then, there exists an index $i \in \{1, \ldots, n\}$ such that $\cap \mathcal{F}_i$ contains an ellipsoid of volume one.  Moreover, if $n= \frac{d(d+3)}{2}-1$, the conclusion of the theorem may fail.

\end{corollary}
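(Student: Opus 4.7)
The plan is to derive the positive part directly from Theorem~\ref{theorem-colorfulhellyellipsoids} by choosing a partition matroid, and then to prove sharpness by pulling back a standard failure of colorful Helly in the $(n-1)$-dimensional parameter space of unit-volume ellipsoids to convex bodies in $\rr^d$.

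For the positive direction, I would form the disjoint union $V = \bigsqcup_{i=1}^n \mathcal{F}_i$ and let $M$ be the partition matroid on $V$ whose blocks are $\mathcal{F}_1,\dots,\mathcal{F}_n$, each of rank one. A subset $V'\subseteq V$ is then independent iff it picks at most one element from each block, and its rank is
\[
\rho(V') \;=\; \bigl|\{\,i : V'\cap \mathcal{F}_i \ne \emptyset\,\}\bigr|.
\]
Every independent $V'$ extends to a full transversal $V''\supseteq V'$, and by hypothesis $\bigcap_{v\in V''}F_v$ contains a unit-volume ellipsoid; hence so does the larger $\bigcap_{v\in V'}F_v$. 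Theorem~\ref{theorem-colorfulhellyellipsoids} then produces $\tau\subseteq V$ with $\rho(V\setminus\tau)\le n-1$ and $\bigcap_{v\in\tau}F_v$ containing a unit-volume ellipsoid. The rank bound forces $V\setminus\tau$ to miss some $\mathcal{F}_i$ entirely, so $\mathcal{F}_i\subseteq\tau$ and $\bigcap\mathcal{F}_i\supseteq\bigcap_{v\in\tau}F_v$ contains a unit-volume ellipsoid.

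For the sharpness clause with $n-1 = \tfrac{d(d+3)}{2}-1$ families, I would exploit that the space $\mathcal{E}$ of unit-volume ellipsoids in $\rr^d$ is a smooth manifold of dimension $d+\tfrac{d(d+1)}{2}-1=n-1$, parametrized by a center and a positive-definite shape matrix of fixed determinant. After choosing local coordinates $(\xi_1,\dots,\xi_{n-1})$ around a reference ellipsoid, I would construct, for each $i$, two convex bodies $K_i^+, K_i^-\subset\rr^d$ whose sets of contained unit-volume ellipsoids correspond (in these coordinates) to disjoint affine half-spaces $\{\xi_i\ge 1\}$ and $\{\xi_i\le -1\}$ of $\mathcal{E}$. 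Setting $\mathcal{F}_i=\{K_i^+,K_i^-\}$, any transversal selects one half-space per coordinate, and the combined shadow in $\mathcal{E}$ is a non-empty ``orthant'' containing a unit-volume ellipsoid; on the other hand $\bigcap\mathcal{F}_i=K_i^+\cap K_i^-$ contains no unit-volume ellipsoid, since its shadow is empty. This mimics the classical sharpness of colorful Helly in $\rr^{n-1}$.

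The main obstacle lies precisely in the realizability step above: exhibiting a convex body in $\rr^d$ whose unit-volume ellipsoid content is a prescribed half-space of $\mathcal{E}$. Translation constraints on the center lift trivially (a slab cuts out a half-space of centers), but constraining the shape parameters is delicate, as rotating or elongating an ellipsoid is not expressible through a single linear condition on an enclosing body. I expect one has to take $K_i^\pm$ to be slabs whose bounding hyperplanes are tangent to a carefully chosen reference ellipsoid, so that the containment condition becomes affine in a well-chosen parametrization of $\mathcal{E}$; combined with the classical sharpness of colorful Helly in $\rr^{n-1}$, this yields the desired counterexample.
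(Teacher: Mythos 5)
Your positive direction is correct and coincides with the paper's proof: take the partition matroid on the disjoint union $V=\bigsqcup_i \mathcal{F}_i$, note that the hypothesis gives the required ellipsoid for every independent set (extend to a transversal and shrink), apply Theorem~\ref{theorem-colorfulhellyellipsoids}, and observe that $\rho(V\setminus\tau)\le n-1$ forces $\tau$ to contain some whole color class. No issues there.

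The sharpness clause, however, is not proved; you identify the gap yourself and it is a real one. The set $S(K)$ of unit-volume ellipsoids contained in a convex body $K$ is never an affine half-space of the parameter manifold $\mathcal{E}$: even when $K$ is a single half-space $\{x:\langle x,u\rangle\le 1\}$, containment of $a+AB_d$ reads $\langle a,u\rangle+\lVert Au\rVert\le 1$, which is convex but genuinely nonlinear in the shape matrix $A$ (the norm term), so no choice of local coordinates $(\xi_1,\dots,\xi_{n-1})$ will turn these shadows into coordinate half-spaces for independently chosen bodies. The classical sharpness example for colorful Helly in $\rr^{n-1}$ therefore does not pull back along this parametrization in the way you propose. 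The paper instead constructs the counterexample intrinsically via John's theorem: take a convex body whose John ellipsoid is the unit ball and has exactly $n=d(d+3)/2$ contact points $u_1,\dots,u_n$ (which holds generically), and let $\mathcal{F}$ consist of the $n$ tangent half-spaces $\{x:\langle x,u_i\rangle\le 1\}$. The John decomposition $\sum\lambda_i u_i\otimes u_i=I$, $\sum\lambda_i u_i=0$ shows $B_d$ is the largest ellipsoid in $\cap\mathcal{F}$, while for any proper subfamily the reduced set of contact vectors can no longer contain $(1/d)J$ in its convex hull (a Carath\'eodory-criticality argument), so the inscribed ellipsoid grows strictly. Setting $\mathcal{F}_1=\cdots=\mathcal{F}_{n-1}=\mathcal{F}$ and rescaling yields the counterexample. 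Note this is a maximality/criticality argument about one family repeated across color classes, not a pullback of an affine colorful-Helly sharp example, which is why it sidesteps the realizability obstacle you ran into.
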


Corollary \ref{corollary-colorfulhellyellipsoids} has been proved by Dam\'asdi \cite{Damasdi:2017ta} using methods similar to those shown by De Loera, La Haye, Oliveros, and Rold\'an-Pensado \cite{DeLoera:2017th}.  The result above exemplifies why the name colorful is attributed to these variations, as we can consider each $\mathcal{F}_i$ as a family of sets painted with the same color.  We have not seen how the geometric methods of Dam\'asdi can be extended to ``colorings'' by matroidal complexes.

  Theorem \ref{theorem-colorfulhellyellipsoids} implies a colorful Helly theorem for the volume similar to the B\'ar\'any-Katchalski-Pach theorem.  Inscribed ellipsoids of maximal volume, called John ellipsoids, have been studied extensively in classical convex geometry \cite{Ball:1997ud}.  In particular, for a convex set $K \subset \rr^d$ with non-empty interior whose John ellipsoid $\mathcal{E}$ is centered at the origin we have
\[
\mathcal{E} \subset K \subset d \mathcal{E}.
\]

This implies that $\operatorname{vol}(\mathcal{E}) \ge d^{-d} \operatorname{vol}(K).$  We can use this fact in conjunction with Corollary \ref{corollary-colorfulhellyellipsoids} to prove a colorful Helly theorem for the volume.  However, we can get a stronger result.  The following theorem is obtained by using Theorem \ref{theorem-colorfulhellyellipsoids} to bootstrap the results by Brazitikos \cite{Brazitikos:2017ts}.

\begin{theorem}\label{theorem-truecolorful-volumentric}
	Let $M$ be a matroid on a set $V$ of vertices with rank function $\rho$, and let $d$ be a positive integer.  For each $v \in V$ we are given a convex set $F_v$ in $\rr^d$.  We know that for each independent set $V' \subset V$ of at most $2d$ vertices, $\cap_{v \in V'} F_v$ has volume at least one.  Then, there exists a set $\tau \subset V$ of vertices such that $\rho ( V \setminus \tau) \le \frac{d(d+3)}{2}-1$ and the volume of $\cap_{v \in \tau} F_v$ is at least $d^{-3d/2-o(1)}$.
\end{theorem}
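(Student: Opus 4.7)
The plan is to use Theorem~\ref{theorem-colorfulhellyellipsoids} to bootstrap Brazitikos's (non-matroidal) volumetric Helly theorem into a matroidal statement. The volumetric content comes from Brazitikos; Theorem~\ref{theorem-colorfulhellyellipsoids} provides the combinatorial extraction of~$\tau$.

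First, I would lift the hypothesis from independent $2d$-subsets to all independent sets. Fix an independent $V'\in M$ of arbitrary cardinality and look at the subfamily $\{F_v:v\in V'\}$. Every subset of $V'$ of size at most $2d$ is itself independent in $M$ (subsets of independent sets remain independent), so satisfies the volume-one hypothesis. Brazitikos's theorem \cite{Brazitikos:2017ts} applied to this subfamily therefore gives
\[
\vol\Bigl(\bigcap_{v\in V'}F_v\Bigr)\ge d^{-3d/2+o(1)}.
\]
Reading the analogous bound for the volume of the John ellipsoid directly out of Brazitikos's proof---which normalises so that the John ellipsoid of the intersection is the unit ball and then applies Brascamp--Lieb against a sparsified John decomposition---one obtains an ellipsoid of volume at least $c(d):=d^{-3d/2+o(1)}$ inscribed in $\bigcap_{v\in V'}F_v$ for each $V'\in M$.

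Second, I would apply Theorem~\ref{theorem-colorfulhellyellipsoids} after a single uniform dilation of the whole family by the factor $c(d)^{-1/d}$, which rescales each of the ellipsoids above to have volume at least one. The hypothesis of Theorem~\ref{theorem-colorfulhellyellipsoids} is now satisfied for every $V'\in M$, so that theorem returns $\tau\subset V$ with $\rho(V\setminus\tau)\le d(d+3)/2-1$ and an ellipsoid of volume at least one inside the dilated intersection $\bigcap_{v\in\tau}\widetilde F_v$. Undoing the dilation yields an ellipsoid of volume at least $c(d)$ inside $\bigcap_{v\in\tau}F_v$, and in particular $\vol(\bigcap_{v\in\tau}F_v)\ge d^{-3d/2-o(1)}$.

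The main obstacle is the ``ellipsoid upgrade'' of Brazitikos's theorem. The naive deduction from volume to inscribed ellipsoid via John's inclusion $\vol(\mathcal E)\ge\vol(K)/d^d$ loses an extra factor of $d^d$ and would only yield the weaker final bound $d^{-5d/2+o(1)}$. To obtain the advertised $d^{-3d/2-o(1)}$ one must extract the ellipsoid bound directly from Brazitikos's Brascamp--Lieb/sparsification argument, or equivalently rerun that argument in the matroid setting so that the sparsified $2d$-subfamily is forced to be independent in $M$ at no additional cost. I expect this to be the technical heart of the proof.
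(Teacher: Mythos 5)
Your proposal is correct and follows essentially the same route as the paper: apply Brazitikos's result to each independent subfamily, read the inscribed-ellipsoid bound $d^{-3d/2+o(1)}$ directly out of his Brascamp--Lieb/sparsification proof rather than losing the extra $d^d$ factor via John's inclusion, and then invoke Theorem~\ref{theorem-colorfulhellyellipsoids} (after an implicit rescaling) to extract $\tau$. The ``technical heart'' you flag is indeed what the paper uses; it simply asserts, without re-deriving, that going through Brazitikos's argument already produces the ellipsoid rather than only the volume bound.
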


Again, a more familiar statement comes from the application of the theorem above to a particular partition matroid.  This implies the following corollary, which follows the style of Lov\'asz's colorful Helly theorem \cite{Barany:1982va}.

\begin{corollary}\label{corollary-simpleellipsoidcoloring}
	Let $n=\frac{d(d+3)}{2}$ and $\mathcal{F}_1, \ldots, \mathcal{F}_n$ be finite families of convex sets in $\rr^d$, considered as color classes.  Suppose that for every choice $F_1, \ldots, F_{2d}$ of $2d$ convex sets of different colors, their intersection has volume greater than or equal to one.  Then, there exists an index $i \in \{1, \ldots, n\}$ such that $\cap \mathcal{F}_i$ has volume greater than or equal to $d^{-3d/2-o(1)}$.
\end{corollary}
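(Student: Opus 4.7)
The plan is to derive the corollary as a direct specialization of Theorem \ref{theorem-truecolorful-volumentric} to a partition matroid whose blocks are exactly the color classes. Concretely, I would take $V$ to be the disjoint union $\mathcal{F}_1 \sqcup \cdots \sqcup \mathcal{F}_n$ (treating repeated sets as formally distinct vertices labeled by their color), and define $M$ by declaring a subset $V' \subset V$ to be independent iff $|V' \cap \mathcal{F}_i| \le 1$ for every $i \in \{1, \ldots, n\}$. This is the standard partition matroid in which the rank function is
\[
\rho(S) = \bigl|\{\, i : S \cap \mathcal{F}_i \neq \emptyset\,\}\bigr|,
\]
and in particular $\rho(V) = n = \tfrac{d(d+3)}{2}$.

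Next I would verify the hypothesis of Theorem \ref{theorem-truecolorful-volumentric}. An independent set of size at most $2d$ is by construction a choice of at most $2d$ sets from pairwise distinct color classes, so the colorful assumption of the corollary says precisely that $\cap_{v\in V'} F_v$ has volume at least one. Applying the theorem produces a subset $\tau \subset V$ with $\rho(V \setminus \tau) \le \tfrac{d(d+3)}{2}-1 = n-1$ and $\vol\bigl(\cap_{v \in \tau} F_v\bigr) \ge d^{-3d/2-o(1)}$.

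To extract the corollary, I would use the explicit form of $\rho$: the bound $\rho(V \setminus \tau) \le n-1$ forces at least one color class $\mathcal{F}_i$ to be disjoint from $V \setminus \tau$, i.e.\ $\mathcal{F}_i \subseteq \tau$. Consequently $\cap_{v \in \tau} F_v \subseteq \cap \mathcal{F}_i$, so
\[
\vol\bigl(\cap \mathcal{F}_i\bigr) \ge \vol\Bigl(\bigcap_{v \in \tau} F_v\Bigr) \ge d^{-3d/2-o(1)},
\]
as required. There is no genuine obstacle here beyond unpacking the matroid formalism; the only subtlety to double-check is that independent sets of cardinality at most $2d$ correspond exactly to the ``$2d$ sets of different colors'' appearing in the hypothesis, and that the inequality $\rho(V\setminus\tau)\le n-1$ indeed guarantees a completely covered color class, both of which are immediate from the definition of the partition matroid.
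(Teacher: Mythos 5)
Your proposal is correct and matches the paper's approach: the paper derives this corollary precisely by applying Theorem \ref{theorem-truecolorful-volumentric} to the partition matroid on the disjoint union of the color classes, and your unpacking of the rank bound $\rho(V\setminus\tau)\le n-1$ into the existence of a fully covered color class is exactly the intended argument. The only minor point worth noting is that the corollary's hypothesis speaks of \emph{exactly} $2d$ sets of different colors while the theorem needs the condition for independent sets of \emph{at most} $2d$ vertices; this is handled by observing that any smaller independent rainbow set can be extended to a rainbow $2d$-set (since $n\ge 2d$ and we may assume the color classes nonempty), whose smaller intersection contains the larger one.
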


Notice that if $\mathcal{F}_1 = \ldots = \mathcal{F}_n$ we recover the best known bound for Helly's theorem for the volume \cite{Brazitikos:2017ts}. Intuitively, the colorful versions we could prove previously dictate the number of color classes needed, yet the known ``monochromatic'' versions dictate the size of the subfamilies we need to check.  We do not know if the value of $n$ in Corollary \ref{corollary-simpleellipsoidcoloring} (or the value of $\rho(V\setminus \tau) + 1$ in Theorem  \ref{theorem-truecolorful-volumentric}) can be reduced from $d(d+3)/2$ to $2d$.  If the volume obtained in the final ellipsoid is allowed to be $\sim d^{-3d^2}$, it was recently shown by Dam\'asdi, F\"oldv\'ari, and Nasz\'odi that the number of color classes can be reduced to $3d$ \cite{Damasdi:2019vm}.


We also obtain quantitative Helly theorems for zonotopes and $H$-convex sets instead of ellipsoids.  We describe here one of the results for zonotopes.  Given directions $v_1, \ldots, v_k \in \rr^d \setminus \{0\}$, and $p \in \rr^d$, we say that a convex set $K$ is a zonotope centered at $p$ with directions $v_1, \ldots, v_k$ if $K$ is the Minkowski sum of $k$ segments with directions in $v_1, \ldots, v_k$.  In other words, there exist $\alpha_1, \ldots, \alpha_k \ge 0$ such that
\[
K = p + \Big( (\alpha_1 v_1) \oplus (\alpha_2 v_2) \oplus \ldots \oplus (\alpha_k v_k)\Big),
\]
where $\oplus$ stands for the Minkowski sum.

\begin{theorem}\label{theorem-zonotopes}
	Let $k \ge d$ be positive integers and $v_1, \ldots, v_k$ be directions in $\rr^d$.  Let $\mathcal{F}$ be a finite family of convex sets in $\rr^d$.  If the intersection of every $k+d$ sets in $\mathcal{F}$ contains a zonotope with directions $v_1, \ldots, v_k$ that has volume one, then $\cap \mathcal{F}$ contains a zonotope with directions $v_1, \ldots, v_k$ with volume one.
\end{theorem}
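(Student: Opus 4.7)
The plan is to lift the problem to the $(k+d)$-dimensional parameter space of zonotopes with the prescribed directions. Writing a zonotope as $Z(p, \alpha) = p + (\alpha_1 v_1) \oplus \cdots \oplus (\alpha_k v_k)$, I parametrize it by $(p, \alpha) \in \rr^d \times \rr^k_{\ge 0}$, and for each $F \in \mathcal{F}$ I introduce the set
\[
C_F = \{(p, \alpha) \in \rr^d \times \rr^k_{\ge 0} : Z(p, \alpha) \subset F \text{ and } \vol(Z(p, \alpha)) \ge 1\} \subset \rr^{k+d}.
\]
The theorem then reduces to showing $\bigcap_{F \in \mathcal{F}} C_F \ne \emptyset$, given that every $(k+d)$-wise subintersection is non-empty.

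The first step is to verify that each $C_F$ is convex in $\rr^{k+d}$. The containment part is convex because the zonotope parametrization is Minkowski-linear: $Z(\lambda \theta + (1-\lambda)\theta') = \lambda Z(\theta) + (1-\lambda)Z(\theta')$ for any $\theta, \theta' \in \rr^d \times \rr^k_{\ge 0}$, so if $Z(\theta), Z(\theta') \subset F$ then the intermediate zonotope also sits in $F$ by convexity of $F$. The volume part is convex because $\vol(Z(p, \alpha))$ is a homogeneous degree-$d$ polynomial in $\alpha$ with nonnegative coefficients (the absolute values of the $d \times d$ minors of the $v_i$'s), so $\vol^{1/d}$ is concave in $(p, \alpha)$ by the Brunn--Minkowski inequality, which makes the superlevel set $\{\vol \ge 1\}$ convex.

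The second step is a Helly-type theorem on $\{C_F\}$ in $\rr^{k+d}$. Raw Helly would demand $(k+d+1)$-wise intersections; saving one to reach the advertised $k+d$ is the crux. I would apply the paper's affine-image matroid framework (Theorem~\ref{theorem-strong-matroid-affine}) with the free matroid on $\mathcal{F}$, since this framework is tailored precisely to witness sets with a Minkowski-linear parametrization and a concave target functional, and it produces a Helly number equal to the parameter dimension---here $k+d$. A common point $(p^*, \alpha^*) \in \bigcap_{F \in \mathcal{F}} C_F$ then yields a zonotope $Z(p^*, \alpha^*) \subset \bigcap \mathcal{F}$ of volume at least one; rescaling $\alpha^* \mapsto c\alpha^*$ with $c = \vol(Z(p^*, \alpha^*))^{-1/d} \in (0,1]$ produces a zonotope of volume exactly one, with containment preserved because multiplying each coefficient by $c \le 1$ shrinks every generator segment and therefore the Minkowski sum.

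The principal obstacle is exactly the Helly-number reduction from $k+d+1$ to $k+d$; the rest is bookkeeping. Without the paper's abstract framework, raw Helly in $\rr^{k+d}$ would only give the weaker statement with $k+d+1$ in the hypothesis, so the essential content of the proof lies in fitting the zonotope parametrization into the hypotheses of Theorem~\ref{theorem-strong-matroid-affine}.
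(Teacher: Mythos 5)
Your first step is sound: parametrizing zonotopes by $(p,\alpha) \in \rr^{d}\times\rr^k_{\ge 0}$ and showing that
\[
C_F = \{(p,\alpha) : Z(p,\alpha) \subset F,\ \vol(Z(p,\alpha)) \ge 1\}
\]
is convex is correct, and your Brunn--Minkowski argument for the superlevel set of the volume is a fine way to see it (it is equivalent to the paper's appeal to $\min$-concavity of the volume under Minkowski convex combinations). The problem is the second step. You correctly identify the Helly-number reduction from $k+d+1$ to $k+d$ as the crux, but then you offload it onto Theorem~\ref{theorem-strong-matroid-affine}, which simply does not apply here. That theorem is stated specifically for witness sets of the form $a + AK$ with $A$ a symmetric positive definite $d\times d$ matrix of determinant one, and its Helly number $\frac{d(d+3)}{2}$ comes from the dimension of that space of affine images, established via a contractibility argument and the Kalai--Meshulam theorem. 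It is not an abstract framework for arbitrary Minkowski-linear parametrizations, and a zonotope $p + \bigoplus_i \alpha_i v_i$ with $k$ generators is not an affine image $a+AK$ of a fixed body. You acknowledge at the end that the essential content lies in ``fitting the zonotope parametrization into the hypotheses of Theorem~\ref{theorem-strong-matroid-affine},'' but this fitting cannot be carried out, and you never attempt it.

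The paper's actual mechanism (Theorem~\ref{theorem-zonotopes-strong}) is both simpler and purely geometric: \emph{hide one coordinate}. Define $S(F)\subset\rr^{k+d-1}$ to consist of tuples $(p_1,\ldots,p_d,\alpha_1,\ldots,\alpha_{k-1})$ for which there \emph{exists} some $\alpha_k \ge 0$ making the resulting zonotope lie in $F$ with volume at least one. Equivalently, $S(F)$ is the coordinate projection of your $C_F$ to $\rr^{k+d-1}$, hence convex because $C_F$ is. Ordinary Helly in $\rr^{k+d-1}$ now has Helly number $k+d$, which is the claimed bound; one then recovers the hidden coordinate by taking, say, the minimum admissible $\alpha_k$ over all $F$. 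So the ingredient you were missing is not a heavier theorem but the observation that you should apply Helly to the projections $S(F)$, not to the sets $C_F$ themselves.
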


Since the proof of Theorem \ref{theorem-zonotopes} relies on a reduction to Helly's theorem, we get for free matroid, colorful, fractional, and $(p,q)$ versions of the theorem above. 

 We discuss the fractional and $(p,q)$ versions of our theorems in Section \ref{section-remarks}, along with open problems and possible directions of research.  In Section \ref{section-geometric} we present our Helly results that have geometric proofs, and in Section \ref{section-topological} we present our Helly results that have topological proofs.  In Section \ref{section-tverberg} we present our Tverberg results.

\section{Helly results with a geometric proof}\label{section-geometric}

The goal of this section is to present several quantitative Helly-type theorems which can be reduced to a standard Helly theorem in higher dimensions.  We can achieve this when we have the following two ingredients.

\begin{itemize}
	\item \textbf{A class $\mathcal{C}$ of sets that is easy to parametrize.}  In most cases, we want families that are closed under Minkowski sum: if $A, B \in \mathcal{C}$, then $A\oplus B \in \mathcal{C}$.  However, we also present results for families of convex sets which are not closed under Minkowski sum.  The dimension of $\mathcal{C}$ as a topological space with the Hausdorff metric is going to determine our Helly numbers.  The parametrization should give a convex structure to $\mathcal{C}$.
	\item \textbf{A $\min$-concave function $f:\mathcal{C} \to \rr$.} We say that a function is $\min$-concave if $f(\lambda A + (1-\lambda) B) \ge \min\{f(A), f(B) \}$ for all $A, B \in \mathcal{C}$ and $\lambda \in [0,1]$.  The definition of convex combination $\lambda A + (1-\lambda)B$ depends on our parametrization of $\mathcal{C}$.  In many cases it represents $\lambda A \oplus (1-\lambda)B$, which we refer to as a Minkowski convex combination of $A$ and $B$.
\end{itemize}

We first present a very general Helly theorem.  This result will work as a blueprint for our results with geometric proofs.  For some applications, we show how to reduce the resulting Helly number.

\begin{definition}
We say that a family $\mathcal{C}$ of convex sets in $\rr^d$ has a Minkowski parametrization in $\rr^l$ if there exists a surjective function $D: \rr^l \to \mathcal{C}$ for which $ D(\lambda a+ (1-\lambda) b) = (\lambda D(a)) \oplus ((1-\lambda) D(b))$ for all $a,b \in \rr^l$ and all $\lambda \in [0,1]$.
\end{definition}

With this, we can now state the following Helly-type theorem.

\begin{theorem}\label{theorem-general-helly}
	Let $\mathcal{C}$ be a family of convex sets in $\rr^d$ that has a Minkowski parametrization in $\rr^l$.  Let $f: \mathcal{C} \to \rr$ be a function such that $f(\lambda A \oplus (1-\lambda)B) \ge \min \{f(A), f(B)\}$ for all $A, B \in \mathcal{C}$ and $\lambda \in [0,1]$.  Then, for any finite family $\mathcal{F}$ of convex sets in $\rr^d$, if the intersection of every $l+1$ or fewer sets in $\mathcal{F}$ contains a set $K \in \mathcal{C}$ such that $f(K) \ge 1$, then $\cap \mathcal{F}$ contains a set $K \in \mathcal{C}$ such that $f(K) \ge 1$.
\end{theorem}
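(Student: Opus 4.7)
The plan is to lift the problem from $\rr^d$ to the parameter space $\rr^l$ via $D$ and then apply the classical Helly theorem in $\rr^l$. For each $F \in \mathcal{F}$, define
\[
S_F = \{ a \in \rr^l : D(a) \subseteq F \},
\]
and, to encode the value of $f$, let
\[
G = \{ a \in \rr^l : f(D(a)) \geq 1 \}.
\]
A point $a$ in $\cap_{F \in \mathcal{F}}(S_F \cap G)$ would yield a witness $K = D(a) \in \mathcal{C}$ contained in $\cap\mathcal{F}$ with $f(K) \geq 1$, so it suffices to verify convexity of each $S_F$ and of $G$, and then to produce such a point via Helly in $\rr^l$.

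The convexity of $G$ is immediate from the min-concavity of $f$ together with the parametrization identity $D(\lambda a + (1-\lambda) b) = \lambda D(a) \oplus (1-\lambda) D(b)$. The convexity of $S_F$ is the one slightly delicate point: if $a, b \in S_F$ and $\lambda \in [0,1]$, then
\[
D(\lambda a + (1-\lambda) b) = \lambda D(a) \oplus (1-\lambda) D(b) \subseteq \lambda F \oplus (1-\lambda) F = F,
\]
where the final equality uses the fact that $F$ is convex. This is the step where the ``Minkowski'' in Minkowski parametrization does real work; a generic topological parametrization would not let the inclusion pass through the convex combination in this way.

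With convexity established, the hypothesis translates cleanly. For every subfamily $\mathcal{F}' \subseteq \mathcal{F}$ with $|\mathcal{F}'| \leq l+1$, we are given some $K \in \mathcal{C}$ with $K \subseteq \cap \mathcal{F}'$ and $f(K) \geq 1$; surjectivity of $D$ produces a preimage $a \in D^{-1}(K)$ lying in $\cap_{F \in \mathcal{F}'}(S_F \cap G)$, so every $(l+1)$-wise intersection of the family $\{ S_F \cap G \}_{F \in \mathcal{F}}$ in $\rr^l$ is non-empty. Classical Helly then yields a common point $a^*$, and $K^* = D(a^*)$ is the desired element of $\mathcal{C}$. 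The only wrinkle is the trivial edge case $|\mathcal{F}| \leq l+1$, handled by applying the hypothesis directly to $\mathcal{F}$ itself. I expect the main obstacle to simply be spotting the identity $\lambda F \oplus (1-\lambda) F = F$ that drives the convexity of $S_F$; once that is in place, the rest is a routine reduction.
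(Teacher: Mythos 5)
Your proof is correct and uses essentially the same reduction the paper applies for this family of results: encode each constraint as a convex subset of the parameter space $\rr^l$ (you split it into $S_F$ and a common $G$, while the paper would typically fold both conditions into a single set $S(F)=S_F\cap G$, which is the same thing) and then invoke classical Helly in $\rr^l$. The key observations — convexity of $S_F$ via $\lambda F\oplus(1-\lambda)F=F$ and convexity of $G$ via min-concavity composed with the Minkowski parametrization — are exactly the ones the paper relies on.
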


We can immediately make two observations.  First, we can apply almost any generalization or extension of Helly's theorem in $\rr^l$ and obtain new results in $\rr^d$.  The proof method above gives quantitative colorful, fractional, and $(p,q)$ theorems in $\rr^d$.  The second observation is that, even though $\mathcal{C}$ is $l$-dimensional, the set $\{ C \in \mathcal{C} : f(C) = 1\}$ is in general $(l-1)$-dimensional.  This makes it possible to reduce the Helly number from $l+1$ to $l$ in several cases, even if $f$ is not a linear function on $\mathcal{C}$.  For some cases, such as zonotopes, we show the improvement in this section.  For other cases, such as ellipsoids, we require the topological proofs.

Let us mention two examples of functions to which we can apply our methods.

\begin{example}[Log-concave measures]\label{example-concave-measures}
	We say a measure $\mu$ in $\rr^d$ is log-concave if $\mu(\lambda A \oplus (1-\lambda)B) \ge \mu(A)^{\lambda} \mu(B)^{1-\lambda}$, for any two Borel sets $A, B$ and $\lambda \in [0,1]$.  There is a simple way to obtain log-concave functions \cite{Borell:1975kx}.  It suffices to take a log-concave density function $p: \rr^d \to \rr^+$ and consider
	\[
	\mu(A) = \int_A p.
	\]
	There are abundant log-concave density functions $p$ to choose from \cite{Saumard:2014fz}.  Common examples are $p$ being constant (which gives $\mu$ as the volume), $p = e^{-\psi}$, where $\psi$ is any convex function (which makes $\mu$ a Gaussian measure if $\psi (x) = ||x||^2$), or $p$ being a multivariate real stable polynomial (if we restrict our sets to the points in $\rr^d$ with positive coordinates).
\end{example}

\begin{example}[Simultaneous approximation of convex sets by a single set]
	Given two sets $A, K \subset \rr^d$, and a positive real number $\varepsilon$, we say that a translation of $A$ is an $\varepsilon$-approximation of $M$ with center $a \in \rr^d$ such that
	\[
	a+ A \subset K \subset a+ (1+\varepsilon)A.
	\]
	Notice that if $\lambda \in [0,1]$ and $A, B$ are $\varepsilon$-approximations of the same convex set $K$, with centers $a, b$, respectively, then $\lambda A \oplus (1-\lambda)B$ is also an $\varepsilon$-approximation of $K$ with center $\lambda a + (1-\lambda)b$.  We say that a translation of $A$ simultaneously $\varepsilon$-approximates a family $\mathcal{F}$ of sets if there exists an $a \in \rr^d$ such that the condition above holds simultaneously for all $K \in \mathcal{F}$ (i.e., we use the same translation vector for all sets in $\mathcal{F})$.
	\end{example}

Let $K$ be a convex set and $\varepsilon >0$.  If $\mathcal{C}_0$ denotes the bounded convex sets in $\rr^d$ whose barycenter is at the origin, we can define a $\{0,1\}$-function.
\begin{align*}
	f_K: \rr^d \times \mathcal{C}_0 & \to \{0,1\} \\
	(a,A) & \mapsto \begin{cases}
		1 & \mbox{if } a + A \subset M \subset a + (1+\varepsilon)A \\
		0 & \mbox{otherwise}
	\end{cases}
\end{align*}

The function above is min-concave with respect to the Minkowski sum because support functions respect Minkowski convex combinations.  In formal terms, if $h_v(C)$ is the support function in direction $v$ of a set $C$ and $\lambda \in [0,1]$, then
	\[
	h_v (\lambda A \oplus (1-\lambda)B) = \lambda h_v(A) + (1-\lambda) h_v(B).
	\]
	
Let us show how this family of functions give exact quantitative Helly theorems, even though they do not fit exactly into the framework of Theorem \ref{theorem-general-helly} (since we have a different function for each convex set $K$).

\begin{theorem}\label{theorem-helly-aproximation}
	Let $\varepsilon > 0$ and $\mathcal{C}$ be a family of convex sets in $\rr^d$, that has a Minkowski parametrization in $\rr^l$.  For a finite family $\mathcal{F}$ of convex sets in $\rr^d$, we know that for every subfamily $\mathcal{F}'$ of $l+d+1$ or fewer sets of $\mathcal{F}$ there exists a translation of a set in $\mathcal{C}$ that is a simultaneous $\varepsilon$-approximation for all sets in $\mathcal{F}'$.  Then, there exists a translation of a set in $\mathcal{C}$ that is a simultaneous $\varepsilon$-approximation all sets in $\mathcal{F}$.
	\end{theorem}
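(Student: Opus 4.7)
The plan is to lift the problem to $\rr^{l+d}$ and apply the classical Helly theorem. The parameters are a translation vector $a \in \rr^d$ and a parameter $\alpha \in \rr^l$ encoding a set $D(\alpha) \in \mathcal{C}$ via the Minkowski parametrization. For each $K \in \mathcal{F}$ I would define
\[
G_K = \{(a,\alpha) \in \rr^d \times \rr^l : a + D(\alpha) \subset K \subset a + (1+\varepsilon)D(\alpha)\}.
\]
A choice of simultaneous $\varepsilon$-approximation for a subfamily $\mathcal{F}' \subset \mathcal{F}$ is then exactly a point in $\bigcap_{K \in \mathcal{F}'} G_K$. Thus it suffices to prove that each $G_K$ is convex in $\rr^{l+d}$ and then invoke Helly in dimension $l+d$.

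To show convexity, I would encode the two containments by means of the support functions $h_v$. For any direction $v$, the condition $a + D(\alpha) \subset K$ is equivalent to $\langle a, v\rangle + h_v(D(\alpha)) \le h_v(K)$, and similarly $K \subset a + (1+\varepsilon)D(\alpha)$ is equivalent to $\langle a, v\rangle + (1+\varepsilon)h_v(D(\alpha)) \ge h_v(K)$. The key observation is that the Minkowski parametrization of $\mathcal{C}$ combined with the linearity of support functions under Minkowski sums forces $\alpha \mapsto h_v(D(\alpha))$ to be affine on every segment, hence globally affine on $\rr^l$. Consequently each of the two conditions above is an affine inequality in $(a,\alpha) \in \rr^{l+d}$, and $G_K$ is the intersection (over all $v$) of two families of affine half-spaces, which is convex.

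With $G_K$ convex for each $K \in \mathcal{F}$, the hypothesis says that every subfamily of at most $l+d+1$ of the $G_K$'s has a common point. Since $\mathcal{F}$ is finite, Helly's theorem in $\rr^{l+d}$ yields a point $(a^\ast, \alpha^\ast) \in \bigcap_{K \in \mathcal{F}} G_K$, and the translation by $a^\ast$ of $D(\alpha^\ast) \in \mathcal{C}$ is the desired simultaneous $\varepsilon$-approximation. The main obstacle in carrying out this plan is the verification that $h_v \circ D$ is affine in $\alpha$; this is where the definition of Minkowski parametrization is essential, since without the identity $D(\lambda a + (1-\lambda)b) = \lambda D(a) \oplus (1-\lambda) D(b)$ one would only obtain convexity of $h_v \circ D$, which would make the sets $G_K$ defined by a mix of convex and concave constraints rather than affine ones. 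Note that this argument is essentially the same template as Theorem \ref{theorem-general-helly}, but enlarged by $d$ extra translation parameters, which accounts for the Helly number being $l+d+1$ rather than $l+1$.
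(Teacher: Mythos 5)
Your proof is correct and follows essentially the same route as the paper: lift to $\rr^{l+d}$ via $(a,\alpha) \mapsto a + D(\alpha)$, check that each set $G_K$ is convex (the paper also invokes the linearity of support functions under Minkowski convex combinations to justify this), and apply Helly's theorem in $\rr^{l+d}$.
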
 	
	
	\begin{proof}
		Let $D: \rr^l \to \mathcal{C}$ be the Minkowski parametrization of $\mathcal{C}$.  Given $K \in \mathcal{F}$, let $f_K : \rr^d \times \mathcal{C} \to \{0,1\}$ be defined as above.  For each $K \in \mathcal{F}$, let $S(K) = \{(a,x) \in \rr^d \times \rr^l : f_K (a,D(x)) = 1\}$.  Since $F \subset \rr^d$ is convex, then $S(F) \subset \rr^{l+d}$ is also convex.  Therefore, an application of Helly's theorem in $\rr^{l+d}$ gives us the desired result.
	\end{proof}
	
	If we are interested in Theorem \ref{theorem-helly-aproximation} for a particular family of convex sets, such as axis-parallel boxes, it suffices to consider with $\mathcal{C}$ the axis-parallel boxes centered at the origin, since the translation vector will be added afterward.  This reduces the Helly number obtained by $d$.  Let us now analyze families of convex sets that admit parametrizations that we can use in Theorem \ref{theorem-general-helly}.

\subsection{Zonotopes and cylinders with fixed directions}

Let ${v}_1, \ldots, v_k$ be fixed directions in $\rr^d$.  For a vector $\bar{m}=(p_1, \ldots, p_d, \alpha_1, \ldots, a_k) \in \rr^{k+d}$ where $\alpha_i \ge 0$ for all $1\le i \le k$, we can consider the zonotope
\[
Z(\bar{m}) = p + \Big((\alpha_1 v_1) \oplus (\alpha_2 v_2) \oplus \ldots \oplus (\alpha_k v_k)\Big),
\]
where $p = (p_1, \ldots p_d) \in \rr^d$.  Then, not only is the family of all zonotopes defined this way closed under Minkowski sums, but $Z(\bar{l}) \oplus Z(\bar{m}) = Z(\bar{l} + \bar{m})$ for all $\bar{l}, \bar{m} \in \rr^{k+d}$.

If we apply Theorem \ref{theorem-general-helly} to zonotopes and min-concave measures, we obtain a Helly number of $k+d+1$.  We now show how to reduce this number by one.

\begin{theorem}\label{theorem-zonotopes-strong}
	Let $v_1, \ldots, v_k$ be directions in $\rr^d$ and $\mathcal{C}$ be the family of zonotopes with directions $v_1, \ldots, v_k$ in $\rr^d$.  Let $\mu$ be a $\min$-concave measure in $\rr^d$.  Then, given a finite family $\mathcal{F}$ of convex sets in $\rr^d$, if the intersection of every $k+d$ or fewer sets in $\mathcal{F}$ contains a zonotope $K \in \mathcal{C}$ such that $\mu(K) \ge 1$, then $\cap \mathcal{F}$ contains a zonotope $K \in \mathcal{C}$ such that $\mu(K) \ge 1$.
\end{theorem}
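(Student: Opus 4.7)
The plan is to apply Theorem~\ref{theorem-general-helly} with the natural Minkowski parametrization of $\mathcal{C}$ and then shave one unit off the Helly number by exploiting the fact that the level set $\{\mu=1\}$ has codimension one.

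First I would set $D\colon\rr^{k+d}\to\mathcal{C}$ by $D(\alpha,p)=Z(\alpha,p)=p\oplus\bigoplus_{i=1}^{k}\alpha_i[0,v_i]$. Since Minkowski addition of zonotopes with a shared set of directions amounts to coordinate-wise addition of parameters, $D$ satisfies $\lambda D(a)\oplus(1-\lambda)D(b)=D(\lambda a+(1-\lambda)b)$, so $D$ is a Minkowski parametrization in $\rr^{k+d}$. For each $F\in\mathcal{F}$ I would define
\[
S(F)=\bigl\{(\alpha,p)\in\rr^{k+d}:\,Z(\alpha,p)\subset F\text{ and }\mu(Z(\alpha,p))\ge 1\bigr\}.
\]
The containment $Z(\alpha,p)\subset F$ is an intersection of affine halfspaces, since the support function $h_{Z(\alpha,p)}$ is linear in $(\alpha,p)$, and the set $\{\mu\circ Z\ge 1\}$ is convex by min-concavity of $\mu$ along the Minkowski parametrization. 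So each $S(F)$ is convex, and a direct application of Theorem~\ref{theorem-general-helly} already yields Helly number $k+d+1$.

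To sharpen from $k+d+1$ to $k+d$, I would show that $(k+d)$-wise intersection of the $S(F)$'s upgrades to $(k+d+1)$-wise intersection by a Radon-type step. Given any $k+d+1$ sets $F_1,\dots,F_{k+d+1}$, for each $i$ pick a witness $K_i=Z(\alpha^{(i)},p^{(i)})\in\bigcap_{j\ne i}S(F_j)$, and normalize it to lie on the convex hypersurface $\Sigma=\{(\alpha,p):\mu(Z(\alpha,p))=1\}$: the map $t\mapsto\mu(Z(t\alpha^{(i)},p^{(i)}))$ is continuous with values $0$ at $t=0$ and $\ge 1$ at $t=1$, so the intermediate-value theorem produces a rescaled witness with $\mu=1$ while keeping the containment intact. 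The goal is then to produce a partition $\{1,\dots,k+d+1\}=I_+\sqcup I_-$ and nonnegative weights $\lambda_i$ with $\sum_{i\in I_+}\lambda_i=\sum_{i\in I_-}\lambda_i=1$ such that
\[
\bigoplus_{i\in I_+}\lambda_iK_i=\bigoplus_{i\in I_-}\lambda_iK_i=:K^{\star}.
\]
Such a partition gives $K^{\star}\subset F_j$ for every $j$ (use the side of the identity whose index set excludes $j$, and then $K_i\subset F_j$ for all $i\ne j$), while min-concavity of $\mu$ applied to the $K_i$'s, all of which satisfy $\mu=1$, gives $\mu(K^{\star})\ge 1$. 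Iterating the implication $(k+d)$-wise $\Longrightarrow$ $(k+d+1)$-wise with the usual Helly induction closes the argument.

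The main obstacle is obtaining the required Radon partition of the $k+d+1$ witnesses. These are $k+d+1$ points in $\rr^{k+d}$, so classical Radon (which needs $k+d+2$ points) does not directly apply; however, they all lie on the $(k+d-1)$-dimensional convex hypersurface $\Sigma$, and the plan is to reduce to Radon in $\rr^{k+d-1}$ by using a projection compatible with the Minkowski parametrization, for instance a central projection from a point in the interior of $\{\mu\ge 1\}$ onto a transverse hyperplane. The delicate task is choosing the projection so that the two Minkowski convex combinations appearing in the Radon identity produce \emph{identical} zonotopes rather than merely identical projections; this is where I expect most of the technical work to live.
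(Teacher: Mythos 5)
Your reduction to Theorem~\ref{theorem-general-helly} with Helly number $k+d+1$ is correct, and the general strategy of ``shave off one'' by reducing to a $(k+d-1)$-dimensional Helly problem matches the intent of the paper. However, the specific mechanism you propose --- a Radon-type partition of the $k+d+1$ normalized witnesses --- has a genuine gap that the proposed central projection cannot fill.

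The issue is that $k+d+1$ points in $\rr^{k+d}$ generically have no Radon partition: the affine-dependence relation that Radon's theorem exploits requires $k+d+2$ points. Normalizing the witnesses to lie on $\Sigma=\{\mu=1\}$ does not fix this, because $\Sigma$ is a \emph{curved} convex hypersurface, not an affine hyperplane, so the $k+d+1$ normalized witnesses still generically span all of $\rr^{k+d}$ affinely and admit no Radon partition. Central projection from an interior point of $\{\mu\ge1\}$ maps them to $k+d+1$ points in $\rr^{k+d-1}$, where Radon does apply, but a central projection is projective rather than affine; a Radon identity for the images does not translate back into the identity $\bigoplus_{i\in I_+}\lambda_iK_i=\bigoplus_{i\in I_-}\lambda_iK_i$ for the zonotopes themselves. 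You flag this yourself as ``where most of the technical work lives'', but it is not a technical detail to be tidied up: it is the crux, and I do not see how the sketched device surmounts it.

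The paper's proof avoids this entirely by a different kind of dimension reduction. Rather than restricting the parameter space to the level hypersurface $\Sigma$ and then trying to flatten it, the paper \emph{projects out} the coordinate $\alpha_k$: it defines $S(F)\subset\rr^{k+d-1}$ as the set of tuples $(p,\alpha_1,\ldots,\alpha_{k-1})$ such that \emph{there exists} some $\alpha\ge0$ making the zonotope $Z=p\oplus(\alpha_1v_1)\oplus\cdots\oplus(\alpha_{k-1}v_{k-1})\oplus(\alpha v_k)$ both contained in $F$ and of $\mu$-measure $\ge1$. Convexity of $S(F)$ then follows directly: given two tuples in $S(F)$ with hidden witnesses $\alpha,\beta$, the tuple $\lambda\bar a+(1-\lambda)\bar b$ has hidden witness $\lambda\alpha+(1-\lambda)\beta$, because the resulting zonotope is a Minkowski convex combination (so it sits inside $F$) and min-concavity of $\mu$ preserves the measure bound. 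Applying Helly in $\rr^{k+d-1}$ gives Helly number $(k+d-1)+1=k+d$; one then recovers a single zonotope in $\cap\mathcal F$ by taking the minimum over the hidden $\alpha$'s, using that decreasing $\alpha$ only shrinks the zonotope while the minimizer already witnesses $\mu\ge1$. The essential structural facts this exploits --- monotonicity of both the containment constraint and the measure in the $\alpha_k$-direction --- are exactly what your approach does not use, and they are what make the one-dimension saving possible. I would recommend replacing your Radon step with this projection argument.
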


\begin{proof}
	We modify a bit the proof of Theorem \ref{theorem-general-helly} to fit this theorem.  Given a convex set $F$ in $\rr^d$, we consider $S(F) \subset \rr^{k+d-1}$ the set of points 
	\[
	(p_1, \ldots, p_d, \alpha_1, \ldots, \alpha_{k-1}) \in \rr^{k+d-1}
	\]
	such that $\alpha_i \ge 0$ for $i =1,\ldots, k-1$ and the following condition holds.  There exists a value of $\alpha \ge 0$ for which the zonotope
	\[
	Z = p + \Big( (\alpha_1 v_1) \oplus (\alpha_2 v_2) \oplus \ldots \oplus (\alpha_{k-1} v_{k-1}) \oplus (\alpha v_k)\Big)
	\]
	is contained in $F$ and $\mu(Z) \ge 1$.  We only have to show that $S(F)$ is convex.  In order to do this, consider two points $ \bar{a}=(p_1, \ldots, p_d, \alpha_1, \ldots, \alpha_{k-1}), \bar{b}=(q_1, \ldots, q_d, \beta_1, \ldots, \beta_{k-1})$ in $S(F)$.  They each have a value $\alpha, \beta$ such that $(\bar{a}, \alpha), (\bar{b}, \beta)$ describe zonotopes contained in $F$ with large $\mu$-measure.  For $\lambda \in [0,1]$, consider the zonotope described by $(\lambda \bar{a} + (1-\lambda) \bar{b}, \lambda \alpha + (1-\lambda) \beta)$.  This is a Minkowski convex combination of the two zonotopes $Z_1, Z_2$ described by $(\bar{a}, \alpha), (\bar{b}, \beta)$, respectively, so it is contained in $F$.  Moreover, by the $\min$-concavity of $\mu$, we have that since $\mu(Z_1) \ge 1$ and $\mu(Z_2) \ge 1$, then $\mu(\lambda Z_1 \oplus (1-\lambda)Z_2) \ge 1$.  Therefore, the first $k+d-1$ coordinates representing $\lambda Z_1 \oplus (1-\lambda)Z_2$ describe a point of $S(F)$, as we wanted.
\end{proof}

The reader may notice that the restriction of $\mu = \operatorname{vol}$ shows that Theorem \ref{theorem-zonotopes} is a corollary of Theorem \ref{theorem-zonotopes-strong}.  Moreover, the convexity of $S(F)$ allows us to obtain colorful, fractional, and $(p,q)$ versions of Theorem \ref{theorem-zonotopes-strong}.

If we take $k=d$ and $v_1, \ldots, v_d$ the canonical basis of $\rr^d$, we obtain a Helly theorem for axis-parallel boxes of volume one.  It should be noted that recovering volumetric Helly theorems for which the Helly number is $2d$ is of particular interest since this is the Helly number in the classical results of B\'ar\'any, Katchalski, and Pach \cite{Barany:1982ga}.

\begin{example}[generalized zonotopes in $\rr^d$]
	In the definition of a zonotope, we can replace any term $\alpha_k v_k$ by $\alpha_k T_k$, where $T_k$ is a convex set in some $r$-dimensional subspace of $\rr^d$.  We call this a generalized zonotope.  For convex sets and $\alpha, \beta >0$ we have
	\[
	\alpha T_k \oplus \beta T_k = (\alpha + \beta) T_k,
	\]
	 so the sets constructed this way are still closed under Minkowski sums.  We do not need to increase the Helly number beyond $k+d$ for Helly-type theorems involving generalized zonotopes.  
\end{example}

As an example of a generalized zonotope, consider the generalized zonotopes in $\rr^d$ of the form
\[
p + (\alpha_1 v_1 \oplus \alpha_2 T_2),
\]
where $v_1$ is a direction in $\rr^d$ and $T_2$ is a unit ball in a hyperplane orthogonal to $v_1$.  These are parallel cylinders in $\rr^d$.  In the case $d=3$, this gives us a Helly theorem for containing a vertical cylinder of volume one with Helly number five.  A direct application of Theorem \ref{theorem-helly-aproximation} shows that \textit{if for every $6$ sets of a finite family of convex sets in $\rr^3$ there exists a vertical cylinder such that one of its translates is a simultaneous $\varepsilon$-approximation of them, then there exists a vertical cylinder such that one of its translates is an $\varepsilon$-approximation of the whole family}.




\subsection{Affine images of a fixed set}\label{example-matrices}
	Consider the set
	\begin{align*}
	\mathcal{C} & = \{(a,A): a \in \rr^d, \ A \mbox{ is a symmetric $d\times d$ positive definite matrix}\}.	\end{align*}
		
	We can assign the linear structure of $(a, A)$ as points in $\rr^{d(d+3)/2}$ to $\mathcal{C}$.
		Let $K \subset \rr^d$ be a set.  Given a convex set $M \subset \rr^d$, we also consider
	\[
	S_K(M) = \{(a,A) \in \mathcal{C} : a +AK \subset M\}.
	\]
	  Gruber proved that for the case $K = B_d$, the unit ball in $\rr^d$, and for any convex set $M \subset \rr^d$, the set $S_{B_d}(M)$ is convex \cite{Gruber:2008gr}.  We can give a short one-line proof of this fact for any $K$.  For two points $(a, A)$, and $(b, B)$ in $S_K(M)$, and $\lambda \in [0,1]$, we have
	\[
	[\lambda a + (1-\lambda)b] + [\lambda A + (1-\lambda)B] K \subset \lambda (a + AK) \oplus (1-\lambda) (b + BK) \subset \lambda M \oplus (1-\lambda)M = M.
	\]
	Therefore, $S_K(M)$ is convex.

Notice that the family of sets of the form $a+AB_d$ with $(a,A) \in \mathcal{C}$ parametrizes all ellipsoids in $\rr^d$.  To see this, consider an ellipsoid centered at the origin, $XB_d$ where $X$ is any non-singular matrix.  We can find a polar decomposition of $X = AQ$, where $A$ is a symmetric positive definite matrix and $Q$ is orthogonal.  Then, $XB_d = AQB_d = AB_d$.

 Even though this is not a Minkowski parametrization of ellipsoids, the fact that $S_K(M)$ is convex is enough to prove an analog version of Theorem \ref{theorem-general-helly} for min-concave functions on symmetric matrices.  Doing this is almost enough to prove Corollary \ref{corollary-colorfulhellyellipsoids}.  We need to consider the function $\det (A)$, which is log-concave in $\mathcal{P}_d$, the space of positive definite $d \times d$ matrices.  However, this application requires the number of color classes to be $n = \frac{d(d+3)}{2}+1$.  In order to reduce the dimension by one, we will use the topological methods of the next section.  We have not found a way to reduce this Helly number in a similar way to the proof of Theorem \ref{theorem-zonotopes-strong}.  An advantage of this slightly weaker form of Corollary \ref{corollary-colorfulhellyellipsoids} is that we do get a $(p,q)$ theorem for containing ellipsoids of volume one.  We can, however, reduce the dimension for other functions, as in the next theorem.
 
 \begin{theorem}
 	Let $\mathcal{F}$ be a finite family of convex sets in $\rr^d$.  If the intersection of every $\frac{d(d+3)}{2}$ or fewer sets in $\mathcal{F}$ contains an ellipsoid whose sum of lengths of principal axes is equal to one, then $\cap \mathcal{F}$ contains an ellipsoid whose sum of lengths of principal axes is equal to one. 
 \end{theorem}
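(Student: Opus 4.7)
The plan is to adapt the strategy of Theorem \ref{theorem-zonotopes-strong}, exploiting the ellipsoid parametrization from Section \ref{example-matrices} together with the fact that the sum of principal semi-axis lengths is a \emph{linear} functional on the parameter space; linearity is precisely what will let us shave one off the naive Helly number $d(d+3)/2+1$.

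First, I would parametrize ellipsoids in $\rr^d$ by pairs $(a,A)\in \rr^d\times \mathcal{P}_d$, with $\mathcal{P}_d$ the open convex cone of symmetric positive definite $d\times d$ matrices, via $(a,A)\mapsto a+AB_d$. This parameter space is affine of dimension $d(d+3)/2$. Since $A$ is symmetric positive definite, its eigenvalues are positive and equal the lengths of the principal semi-axes of $a+AB_d$, so the sum of these lengths equals $\operatorname{tr}(A)$, a linear functional of the parameters.

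Next, for each $F\in \mathcal{F}$, I would introduce
\[
S'(F)=\{(a,A)\in \rr^d\times \mathcal{P}_d : a+AB_d\subset F \text{ and } \operatorname{tr}(A)=1\}.
\]
By the one-line argument from Section \ref{example-matrices}, the set $S_{B_d}(F)=\{(a,A):a+AB_d\subset F\}$ is convex; intersecting with the affine hyperplane $H=\{\operatorname{tr}(A)=1\}$ preserves convexity, so each $S'(F)$ is a convex subset of $H$, and $H$ is affine of dimension $d(d+3)/2-1$. The hypothesis says exactly that for every subfamily $\mathcal{F}'\subset \mathcal{F}$ with $|\mathcal{F}'|\le d(d+3)/2$ one has $\bigcap_{F\in\mathcal{F}'}S'(F)\neq\emptyset$. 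Applying Helly's theorem inside $H$, whose Helly number is $d(d+3)/2$, then produces a point $(a_0,A_0)\in\bigcap_{F\in\mathcal{F}}S'(F)$, and the corresponding ellipsoid $a_0+A_0B_d$ sits inside $\cap\mathcal{F}$ with sum of principal axes equal to $1$, as desired.

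The main subtlety I would track carefully concerns the boundary of $\mathcal{P}_d$: for a finite family, Helly's theorem needs only convexity, not closedness, so no compactness argument is required. Should one prefer to work with a closed version of $S'(F)$, one must allow $A$ to be merely positive semi-definite, in which case the resulting ellipsoid may be degenerate; this is harmless, since its principal semi-axis lengths still sum to $1$, matching the statement. The only thing I would sanity-check beyond this is the equivalence between requiring $\operatorname{tr}(A)=1$ and $\operatorname{tr}(A)\ge 1$ in the hypothesis, which holds by shrinking any inscribed ellipsoid with larger trace toward its center. Everything else is routine once the linearity of $\operatorname{tr}$ in the parameters is recognized.
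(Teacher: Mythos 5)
Your proof is correct and follows essentially the same route as the paper: parametrize ellipsoids by $(a,A)\in\rr^d\times\mathcal{P}_d$ via $a+AB_d$, use the convexity of $S_{B_d}(F)$ from Section \ref{example-matrices}, observe that the trace is \emph{linear} so the constraint $\operatorname{tr}(A)=\text{const}$ cuts out a $\left(\frac{d(d+3)}{2}-1\right)$-dimensional affine slice on which the $S'(F)$ remain convex, and then apply Helly's theorem in that slice. The only discrepancy is a harmless normalization constant (the paper fixes $\operatorname{tr}(A)=\frac{1}{d\tau}$ with $\tau=\diam B_d$ rather than $\operatorname{tr}(A)=1$), which does not affect the argument.
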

 
 \begin{proof}
 	Let $\tau$ be the diameter of $B_d$ and $\operatorname{tr}(\cdot)$ denote the trace function.  For each $K \in \mathcal{F}$, let 
 	\[
 	S(K) = \left\{(a,A) \in \rr^d \times \mathcal{P}_d : \ \operatorname{tr}(A) = \frac{1}{d \tau}, \quad a +AB_d \subset K \right\}.
 	\]
 	Since the trace is a linear function, $S(K)$ is a subset of a $\left(\frac{d(d+3)}{2}-1\right)$-dimensional affine subspace of $\rr^d \times \mathcal{P}_d$.  Moreover, $S(K)$ is convex.  The lengths of the axes of the ellipsoids $a +AB_d$ are equal to $\tau$ times the eigenvalues of $A$, so $d \tau \operatorname{tr}(A)$ is the sum of the lengths of the axes of the ellipsoid.  An application of Helly's theorem in this $\left(\frac{d(d+3)}{2}-1\right)$-dimensional affine subspace of $\rr^d \times \mathcal{P}_d$ gives us the desired result.
 \end{proof}
 

\begin{figure}[h]
	\centerline{\includegraphics{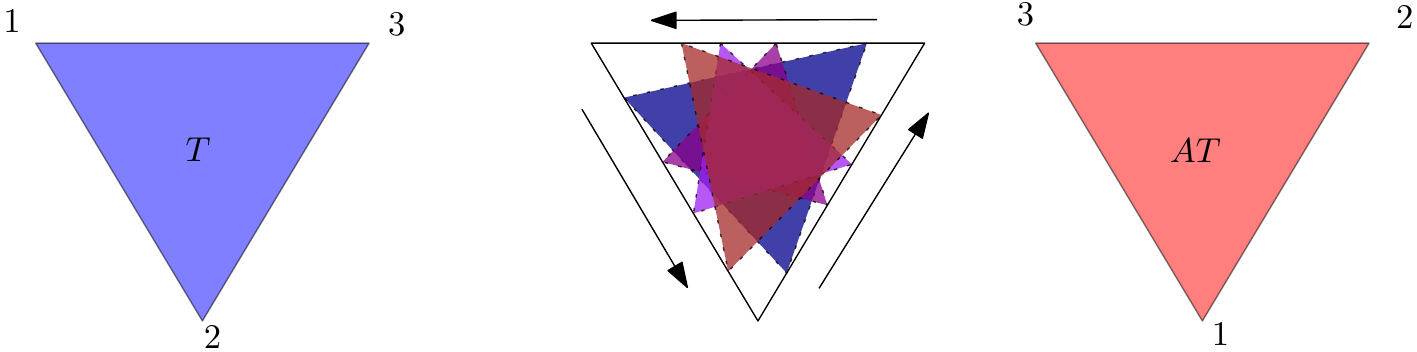}}
	\caption{The matrix $A$ preserves the triangle $T$ setwise, but permutes the vertices.  In the space of triangles, we cannot deform $T$ to $AT$ by using triangles of constant area and contained in $T$.}\label{figure-triangles}
\end{figure}

Some readers may find the condition in Section \ref{example-matrices} of the matrices being positive semidefinite unusual.  However, we cannot remove it completely.  Consider $\mathcal{T}$ to be the set of triangles in the plane, and $K$ be a particular area one triangle in the plane.  Then $\mathcal{T}$ can be parametrized as the family of sets of the form $a + AK$ where $a \in \rr^2$ and $A$ is a non-singular $2\times 2$ matrix.  However, the set $S_{\triangle}(K) \subset \mathcal{T}$ that represents triangles of area at least one which are contained in $K$ consists of six isolated points (one for each permutation of vertices).  In order for $S_{\triangle}(K)$ to be convex, or even just connected, we need some conditions on the matrices involved.  Figure \ref{figure-triangles} illustrates this argument.


\subsection{$H$-convex sets}

	Let $S^{d-1} \subset \rr^d$ be the unit sphere.  Given a family of directions $H \subset S^{d-1}$, not contained in any closed half-sphere of $S^{d-1}$, we consider all half-spaces of the form $\{ y : \langle y, h \rangle \le \lambda\}$ where $h \in H, \lambda \in \rr$, and $\langle \cdot, \cdot \rangle$ stands for the dot product.  We refer to these halfspaces as \textit{$H$-halfspaces}.  We say that a set $Y \subset \rr^d$ is $H$-convex if and only if it is the intersection of a set of $H$-halfspaces.  Boltyanski and Martini characterized the sets $H$ for which $H$-convex sets are closed under Minkowski sums \cite{Boltyanski:2003ir}.
	
	\begin{figure}[h]
	\centerline{\includegraphics[scale=0.8]{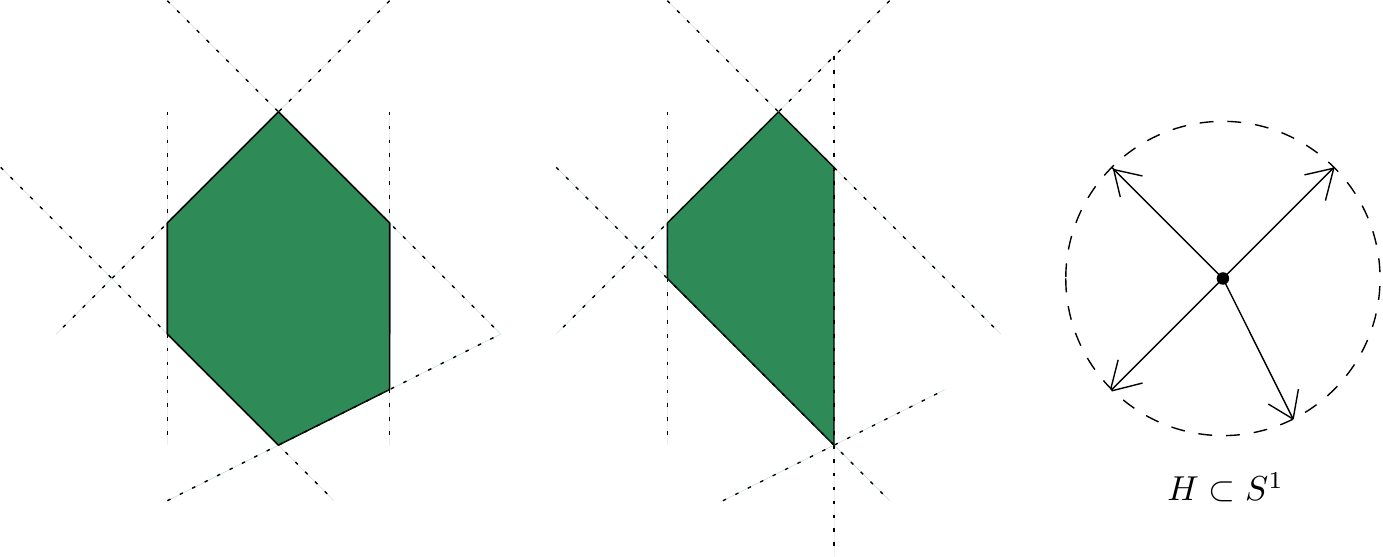}}
	\caption{Two $H$-convex sets in the plane.  Redundant half-spaces are used as support hyperplanes.}
\end{figure}
	
	The intuitive idea is to consider two different $H$-convex polytopes $P$ and $Q$ in $\rr^d$.  For each $1\le k \le d-2$, it is possible that a face in the $k$-dimensional skeleton of $P$ and a face in the $(d-k-1)$-skeleton of $Q$ get added to make a facet of $P\oplus Q$.  The condition of Boltyanski and Martini is that the direction orthogonal to this new facet is contained in $H$, for all such possible directions.  In the plane, any finite set $H \subset S^1$ not contained in any closed half-circle gives a set of directions for which $H$-convex sets are closed under Minkowski sums.  We avoid stating this condition explicitly since we don't use it directly.

\begin{claim}
	Let $H \subset S^{d-1}$ be a finite set of directions for which $H$-convex sets are closed under Minkowski sums.  Then, the family $\mathcal{C}$ of $H$-convex sets admits a Minkowski parametrization in $\rr^{|H|}$.
\end{claim}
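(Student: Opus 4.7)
My plan is to parametrize $\mathcal{C}$ by the support function values along the directions in $H$. Enumerate $H = \{h_1, \ldots, h_n\}$ with $n = |H|$, and for $a = (a_1, \ldots, a_n) \in \rr^n$ set
\[
D(a) := \bigcap_{i=1}^{n} \{y \in \rr^d : \langle y, h_i\rangle \le a_i\}.
\]
This is $H$-convex by construction. For surjectivity, every $H$-convex set $K$ equals the intersection of its supporting $H$-halfspaces, so $K = D(a^K)$ where $a^K_i := h_{h_i}(K)$.

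For the Minkowski identity I would use the Boltyanski--Martini closure hypothesis. Whenever $a = a^K$ and $b = a^L$ come from $H$-convex sets $K, L$, the set $\lambda K \oplus (1-\lambda) L$ is $H$-convex for every $\lambda \in [0,1]$ by the assumed closure, and the linearity of support functions under Minkowski combinations gives $h_{h_i}(\lambda K \oplus (1-\lambda) L) = \lambda a_i + (1-\lambda) b_i$ at every $h_i$. Rewriting $\lambda K \oplus (1-\lambda) L$ as the intersection of its supporting $H$-halfspaces then yields
\[
\lambda D(a) \oplus (1-\lambda) D(b) = \bigcap_i \{y : \langle y, h_i\rangle \le \lambda a_i + (1-\lambda) b_i\} = D(\lambda a + (1-\lambda) b).
\]

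The main obstacle I foresee is that $D$ is Minkowski-linear only on the convex cone $C := \{a^K : K \in \mathcal{C}\} \subset \rr^{|H|}$ of genuine support function vectors, and not on all of $\rr^{|H|}$: if $a_i$ strictly exceeds $h_{h_i}(D(a))$ then the $i$-th halfspace is redundant in $D(a)$, and the resulting slack causes $D(a) \oplus D(b)$ to be strictly contained in $D(a+b)$ in general. This is the same technicality that appears in the paper's zonotope parametrization, where the constraint $\alpha_i \ge 0$ restricts the parameter vector to a cone in $\rr^{k+d}$, and I would resolve it in the same way, by restricting $D$ to $C$. The cone $C$ has full dimension $n$ because any $H$-convex polytope whose $n$ supporting halfspaces from $H$ all contribute distinct facets admits independent perturbations of each supporting hyperplane, which realizes an open set of signatures in $\rr^{|H|}$. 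This yields the $|H|$-dimensional Minkowski parametrization needed for Theorem \ref{theorem-general-helly}.
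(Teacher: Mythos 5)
Your approach is the same as the paper's: parametrize an $H$-convex set by the vector of its support function values along the directions in $H$, and use the Boltyanski--Martini closure hypothesis together with the additivity $h_v(A\oplus B)=h_v(A)+h_v(B)$ of support functions to get the Minkowski identity. So this is not a genuinely different route. What you add, correctly, is the observation that the map $D$ defined by intersecting $H$-halfspaces is only Minkowski-linear on the convex cone of \emph{genuine} support vectors, not on all of $\rr^{|H|}$; when $a_i$ strictly exceeds $h_{h_i}(D(a))$ one only gets $D(a)\oplus D(b)\subseteq D(a+b)$ in general. The paper's proof sidesteps this by phrasing the argument in terms of ``vectors $\bar a, \bar b$ parametrizing sets $A$ and $B$,'' i.e.\ it implicitly restricts to exactly the cone you identify, but this is in tension with the paper's formal definition of a Minkowski parametrization as a map on all of $\rr^l$ (the same mismatch already occurs with $\alpha_i\ge 0$ in the zonotope parametrization). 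Your fix --- restrict $D$ to the full-dimensional convex cone of support vectors --- is the right one, and it does not affect the downstream Helly arguments, since the sets $S(F)$ they apply Helly's theorem to are convex subsets of this cone in any case.
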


\begin{proof}
	Let $H=\{v_1, \ldots, v_{|H|}\}$.  Any $H$-convex set $K$ can be parametrized by a vector $(\lambda_1, \ldots, \lambda_{|H|})) \in \rr^{|H|}$ such that $\lambda_i = h_{v_i}(K)$.  In other words, $\lambda_i$ is the support function of $K$ in the direction $v_i$.  Notice that if vectors $\bar{a}, \bar{b}$ parametrize sets $A$ and $B$ and $\lambda \in [0,1]$, then the vector $\lambda \bar{a} + (1-\lambda) \bar{b}$ parametrizes $\lambda A \oplus (1-\lambda) B$. This is the Minkowski parametrization we wanted.
\end{proof}

We can apply Theorem \ref{theorem-general-helly} to families of $H$-convex sets as long as they are closed under Minkowski sums.  The Helly numbers will be $|H|+1$.  However, for log-concave measures, we can again decrease the Helly number by one.

\begin{theorem}\label{theorem-hconvex}
	Let $H \subset S^{d-1}$ be a finite set of directions for which $H$-convex sets are closed under Minkowski sums.  Let $\mathcal{F}$ be a finite family of convex sets in $\rr^d$ and $\mu$ be a log-concave measure.  Suppose that the intersection of every $|H|$ or fewer sets in $\mathcal{F}$ contains an $H$-convex set of $\mu$ measure greater than or equal to one.  Then, $\cap \mathcal{F}$ contains an $H$-convex sets of $\mu$ measure greater than or equal to one.
\end{theorem}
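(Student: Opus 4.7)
The plan is to adapt the strategy used in the proof of Theorem~\ref{theorem-zonotopes-strong}: parametrize $H$-convex bodies by their support function values $(h_{v_1}, \ldots, h_{v_{|H|}})$ in the directions of $H=\{v_1, \ldots, v_{|H|}\}$, and then project this parameter space onto its first $|H|-1$ coordinates so that Helly's theorem in $\rr^{|H|-1}$ delivers the desired Helly number $|H|$. Because $H$-convex bodies are closed under Minkowski sums, the identity $h_{v_i}(\lambda A \oplus (1-\lambda) B) = \lambda h_{v_i}(A) + (1-\lambda) h_{v_i}(B)$ tells us that Minkowski convex combinations of $H$-convex bodies correspond to ordinary convex combinations of their support vectors, and an $H$-convex body is uniquely recovered from its support vector via $A = \bigcap_i \{y : \langle y, v_i \rangle \le h_{v_i}(A)\}$.

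For each $F \in \mathcal{F}$, I would define
\[
S(F) := \{(\lambda_1, \ldots, \lambda_{|H|-1}) \in \rr^{|H|-1} : \exists\, \alpha \in \rr \text{ and } H\text{-convex } A \subset F \text{ with } h_{v_i}(A)=\lambda_i\ (i<|H|),\ h_{v_{|H|}}(A)=\alpha,\ \mu(A)\ge 1\}.
\]
Convexity of $S(F)$ follows by taking two witnesses $(\bar a, \alpha, A)$ and $(\bar b, \beta, B)$, forming $C := \lambda A \oplus (1-\lambda) B$, and observing that (i) $C$ is $H$-convex by hypothesis on $H$, (ii) $C$ has support vector $(\lambda \bar a + (1-\lambda)\bar b,\ \lambda\alpha + (1-\lambda)\beta)$ by the support-function identity above, (iii) $C \subset F$ because $\lambda F \oplus (1-\lambda)F = F$, and (iv) $\mu(C) \ge 1$ by the min-concave consequence of log-concavity of $\mu$. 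The hypothesis of the theorem implies that every $|H|$ sets $S(F_1), \ldots, S(F_{|H|})$ have a common point, extracted from the first $|H|-1$ support values of the common $H$-convex witness in $\bigcap F_j$. So Helly's theorem in $\rr^{|H|-1}$ delivers a vector $\bar\lambda^* \in \bigcap_{F \in \mathcal{F}} S(F)$.

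The main subtlety lies in producing a \emph{single} $H$-convex body from $\bar\lambda^*$, because a priori the witness $\alpha_F$ and $A_F$ may depend on $F$. I would fix a witness pair $(\alpha_F, A_F)$ for each $F$, pick $F^* \in \mathcal{F}$ minimizing $\alpha_F$, and set $A^* := A_{F^*}$. Then $\mu(A^*) \ge 1$ by construction, and for every other $F \in \mathcal{F}$ the bodies $A^*$ and $A_F$ agree in all their support values $\lambda_i^*$ for $i<|H|$ while satisfying $h_{v_{|H|}}(A^*) = \alpha_{F^*} \le \alpha_F = h_{v_{|H|}}(A_F)$. Since containment of $H$-convex bodies is equivalent to coordinate-wise comparison of their support functions, this forces $A^* \subset A_F \subset F$, so $A^* \subset \bigcap \mathcal{F}$ is the required $H$-convex witness of $\mu$-measure at least one.
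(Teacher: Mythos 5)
Your proposal is correct and follows essentially the same route as the paper: parametrize $H$-convex bodies by their support values, project out the last coordinate to define a convex set $S(F) \subset \rr^{|H|-1}$ for each $F$, verify its convexity via a Minkowski convex combination of witnesses, and invoke Helly in $\rr^{|H|-1}$. The one place you go beyond the paper is the final paragraph, where you carefully extract a single $H$-convex witness by taking the $F^*$ minimizing $\alpha_F$ and using the coordinate-wise characterization of containment for $H$-convex sets; the paper leaves that last extraction step implicit (saying only ``so the conclusion of the theorem follows''), and your handling of it is correct and a welcome addition.
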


\begin{proof}
	With the Minkowski parametrization in $\rr^{|H|}$ described above, we can consider
	\begin{align*}
	S(F)   = \{ & \left(\lambda_1, \ldots, \lambda_{|H|-1}\right) \in \rr^{|H|-1} : \mbox{ there exists $\lambda$ such that $(\lambda_1, \ldots, \lambda_{|H|-1}, \lambda)$ } \\ & \mbox{represents an $H$-convex set that has volume at least one and} \\ & \mbox{is contained in $F$} \}	
	\end{align*}
	
	An analogous argument to the one used in the proof of Theorem $\ref{theorem-zonotopes-strong}$ shows that $S(F)$ is convex, so the conclusion of the theorem follows.
	
\end{proof}

As an example, we can consider $H$ to be the set of directions of the form $\pm e_i$ where $e_i$ is the $i$-element of the canonical basis.  Then, $H$-convex sets are axis-parallel boxes.  We recover our Helly theorem for boxes with the same Helly number as before, $2d$.

 Zonotopes and $H$-convex sets are both families of polytopes for finite sets $H$.  The Helly number for theorems regarding $H$-convex sets is the number of possible facets such polytopes can have, while for zonotopes it's the number of directions in their $1$-skeleton.  In the plane, $H$-convex sets give us a much stronger result, since we can have polytopes which are not centrally symmetric.  Moreover, if in $\rr^d$ we have $H = -H$ and we only seek centrally symmetric $H$-convex sets, we can parametrize any $H$-convex set using $d+|H|/2$ parameters, thereby reducing the Helly number.



\section{Helly results with a topological proof}\label{section-topological}

Given a finite family $\mathcal{F}$ of sets, we can define $N(\mathcal{F})$, the \textit{nerve complex} of $\mathcal{F}$, as a simplicial complex with one vertex for each element of $\mathcal{F}$, and include a face if the corresponding vertices represent a subfamily with a non-empty intersection.  A large family of variations of Helly's theorem relies on studying the nerve complex of a family of sets.  Understanding the topological properties of the sets in question and their nerve complexes is often all that is needed to prove Helly-type theorems \cite{Tancer:2013iza, ColindeVerdiere:2014gwa}.

Kalai and Meshulam proved a broad generalization of the colorful Helly theorem for simplicial complexes which are $d$-Leray.  We say a simplicial complex $X$ is $d$-Leray if the $i$-th reduced homology group $\tilde{H}_i (Y)$ over $\mathbb{Q}$ vanishes for every induced subcomplex $Y $ of $ X$ and every $i \ge d$.

\begin{theoremp}[Kalai, Meshulam 2005 \cite{Kalai:2005tb}]
	Let $X$ be a $d$-Leray complex on a set $V$ of vertices.  Let $M$ be a matroidal complex on the same set $V$ of vertices with a rank function $\rho$.  If $M \subset X$ then there exists a simplex $\tau \in X$ such that $\rho (V \setminus \tau) \le d$.
\end{theoremp}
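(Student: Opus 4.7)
The plan is to proceed by induction on $|V|$, pairing matroid deletion-contraction on $M$ with topological deletion-link on $X$. The base case $|V| \le d$ is immediate: $\tau = \emptyset$ lies in $X$ and $\rho(V) \le |V| \le d$.

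For the inductive step, fix a vertex $v \in V$. On the topological side, the induced subcomplex $X - v$ inherits the $d$-Leray property, and the link $\mathrm{lk}_X(v) = \{\sigma : v \notin \sigma,\ \sigma \cup \{v\} \in X\}$ fits into the Mayer--Vietoris exact sequence for the cover $X = \mathrm{star}_X(v) \cup (X - v)$, whose intersection is $\mathrm{lk}_X(v)$ and whose star piece is contractible. On the matroid side, the deletion $M - v$ is a matroidal complex on $V \setminus \{v\}$ with rank function $\rho_{M-v}(A) = \rho(A)$, and when $v$ is not a loop, the contraction $M/v$ is a matroidal complex with rank function $\rho_{M/v}(A) = \rho(A \cup \{v\}) - 1$. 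Since $M \subset X$, we have $M - v \subset X - v$ and $M/v \subset \mathrm{lk}_X(v)$. The induction splits into two cases. If $v$ is a loop of $M$, apply the hypothesis to $(X - v, M - v)$ with Leray number $d$: loops contribute nothing to $\rho$, so the resulting $\tau$ satisfies $\rho(V \setminus \tau) = \rho((V \setminus \{v\}) \setminus \tau) \le d$. If $v$ is not a loop, apply the hypothesis to $(\mathrm{lk}_X(v), M/v)$ with Leray number $d - 1$, obtain $\tau' \in \mathrm{lk}_X(v)$ with $\rho_{M/v}((V \setminus \{v\}) \setminus \tau') \le d - 1$, and set $\tau = \tau' \cup \{v\} \in X$; the contraction formula together with monotonicity then yields $\rho(V \setminus \tau) \le d$.

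The main obstacle is establishing that $\mathrm{lk}_X(v)$ together with all of its induced subcomplexes is $(d-1)$-Leray, which is what the non-loop branch of the induction demands. The Mayer--Vietoris fragment
\[
\tilde H_{d}(X) \to \tilde H_{d-1}(\mathrm{lk}_X(v)) \to \tilde H_{d-1}(X - v)
\]
has a right-hand term that is not controlled by the $d$-Leray hypothesis on $X$, so a naive calculation only produces a $d$-Leray bound on the link. Bridging this gap requires invoking the Leray-vanishing uniformly across every induced subcomplex of $X$ containing $v$, and assembling that information through a spectral sequence of the type used in the original Kalai--Meshulam argument; alternatively, one can choose $v$ more cleverly (for instance, from the support of an obstruction class, or outside a carefully selected basis of $M$) so that a matching off-by-one in the rank accounting absorbs the weaker link bound. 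This passage from pointwise Leray information on $X$ to uniform Leray information on links is the technical heart of the theorem and is where the deepest algebraic-topological input is needed.
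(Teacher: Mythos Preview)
The paper does not prove this statement.  It is quoted as a known result of Kalai and Meshulam \cite{Kalai:2005tb} and used as a black-box tool throughout Section~\ref{section-topological}; there is no ``paper's own proof'' to compare your attempt against.

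Regarding the proposal itself: what you have written is not a proof but a sketch with an explicitly acknowledged gap, and that gap is fatal to the scheme as written.  The assertion underlying your non-loop branch --- that $\mathrm{lk}_X(v)$ is $(d-1)$-Leray whenever $X$ is $d$-Leray --- is \emph{false}.  Take $X$ to be the path on three vertices $a,b,c$ with maximal faces $\{a,b\}$ and $\{b,c\}$.  Every induced subcomplex has vanishing $\tilde H_i$ for $i\ge 1$, so $X$ is $1$-Leray; yet $\mathrm{lk}_X(b)=\{\emptyset,\{a\},\{c\}\}$ is two isolated points, with $\tilde H_0\cong\mathbb{Q}\neq 0$, hence not $0$-Leray.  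Your Mayer--Vietoris fragment already shows why: it only yields an injection $\tilde H_{d-1}(\mathrm{lk}_X(v))\hookrightarrow \tilde H_{d-1}(X-v)$, and the target need not vanish.  Your fallback suggestions --- a spectral-sequence patch, or a clever choice of $v$ that trades an off-by-one on the topological side for one on the rank side --- are not developed into arguments; the first is essentially a pointer back to the machinery in the original paper, and the second would require a structural lemma (existence of a vertex whose link \emph{is} $(d-1)$-Leray, or an analogous rank-absorbing property) that you have not supplied and that does not obviously hold.  If you want to actually prove the theorem, you should consult \cite{Kalai:2005tb} directly; the argument there is organized around a different homological reduction than a bare link--deletion induction.
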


In most of our applications, we only use the case when $M$ is a partition matroid.  In this matroid, we are given a partition $V= V_1 \biguplus V_2 \biguplus \ldots \biguplus V_k$ and we declare that a set $L \subset U$ is independent in $M$ if and only if $|L \cap V_i| \le 1$.  This makes the rank function $\rho(S)$ to be the number of indices $i$ such that $S \cap V_i \neq \emptyset$.  The classical colorful variations of Helly appear if $M$ is a partition matroid with $k=d+1$.

In order to apply Kalai and Meshulam's Helly theorem, we need to construct nerve complexes corresponding to quantitative intersections and bound their Leray number.  If a topological space $X$ is $n$-dimensional, then the homology groups of its open subsets vanish starting from $\tilde{H}_{n+1} ( \cdot )$.  In some cases, we can apply the following simple lemma to improve our bounds.

\begin{lemma}\label{lemma-leray}
	If a topological space $X$ is $n$-dimensional and $\tilde{H}_n(X) = 0$, then $\tilde{H}_k (Y) = 0$ for all $k \ge n$ and all open subsets $Y \subset X$.
\end{lemma}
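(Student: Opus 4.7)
The plan is to handle the degrees $k > n$ and $k = n$ separately, since the strict inequality is immediate from dimension alone and only the boundary case $k = n$ actually uses the hypothesis $\tilde{H}_n(X) = 0$.

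For $k > n$, I would argue that openness preserves dimension, so $Y$ is an $n$-dimensional space. Any $n$-dimensional space has vanishing reduced homology above degree $n$, so $\tilde{H}_k(Y) = 0$ without ever invoking the extra assumption on $\tilde{H}_n(X)$.

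For the critical case $k = n$, I would apply the long exact sequence of the pair $(X, Y)$:
\[
\cdots \to \tilde{H}_{n+1}(X) \to H_{n+1}(X, Y) \to \tilde{H}_n(Y) \to \tilde{H}_n(X) \to \cdots.
\]
The rightmost term vanishes by hypothesis, and the leftmost vanishes because $X$ is $n$-dimensional. Exactness then yields a surjection $H_{n+1}(X, Y) \twoheadrightarrow \tilde{H}_n(Y)$, and the lemma reduces to proving $H_{n+1}(X, Y) = 0$. I would handle this by excision: set $K = X \setminus Y$, a closed subset, and excise a suitable open neighborhood of $K$ to replace $(X, Y)$ by a pair built from $K$ and that neighborhood. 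Both of these are open (or closed) subspaces of $X$, hence also of dimension at most $n$, so relative $(n+1)$-chains have no room to contribute a nontrivial class.

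I expect the main obstacle to be making the excision step fully rigorous for arbitrary topological spaces $X$. For the parameter spaces actually used in this paper---subsets of $\rr^l$ parametrizing geometric objects such as symmetric positive-definite matrices, centers, or support values---$X$ is in practice a smooth manifold or a manifold with corners, and then $H_{n+1}(X, Y) = 0$ is a direct consequence of Poincar\'e--Lefschetz duality: every open subset of a non-compact or non-orientable $n$-manifold has trivial top-dimensional singular homology, which is exactly the situation forced by the hypothesis $\tilde{H}_n(X) = 0$.
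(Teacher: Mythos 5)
Your plan matches the paper's proof in its essential structure: both handle $k>n$ by the bare dimension bound, and both reduce the critical case $k=n$ to the observation that the inclusion $Y\hookrightarrow X$ must kill no class in $\tilde H_n(Y)$, which combined with $\tilde H_n(X)=0$ forces $\tilde H_n(Y)=0$. The paper phrases this directly: take a cycle $q$ representing a nonzero class in $\tilde H_n(Y)$; since $X$ is $n$-dimensional, $q$ cannot bound in $X$, so $[q]\neq 0$ in $\tilde H_n(X)$, a contradiction. Your long-exact-sequence framing of the pair $(X,Y)$ encodes the identical fact (the kernel of $\tilde H_n(Y)\to\tilde H_n(X)$ is the image of $H_{n+1}(X,Y)$, and the latter vanishes by dimension), so this is the textbook version of the same argument, not a genuinely different route.

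Where you diverge, and lose ground, is the attempt to establish $H_{n+1}(X,Y)=0$ by excision and Poincar\'e--Lefschetz duality. This is heavier than needed and, as you note yourself, forces you to restrict to manifolds to make the excision step and the duality argument rigorous. The paper's argument does not pass through relative homology at all: it only uses that an $n$-dimensional space supports no nontrivial $(n+1)$-chains, which is the same dimension input you already use for $k>n$. Since the spaces relevant in the paper (such as $\mathcal{K}$, the symmetric positive-definite matrices with determinant one crossed with $\rr^d$) are smooth manifolds, your manifold assumption is harmless here, but it is an unnecessary loss of generality and the excision/duality machinery adds complication without buying anything. If you keep your LES framing, the cleanest finish is to drop excision entirely and just argue directly, as the paper does, that $H_{n+1}(X,Y)=0$ because $X$ is $n$-dimensional; or better, skip the LES and argue injectivity of $\tilde H_n(Y)\to\tilde H_n(X)$ in one line.
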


\begin{proof}
	Let $Y$ be an open subset of $X$.  We know that $\tilde{H}_i (Y) = 0$ for $i > n$.  Assume that $\tilde{H}_n(Y) \neq 0$.  This means that there exists a non-zero element $[q] \in \tilde{H}_n (Y)$.  Consider a cycle in $[q]$ as a subset of $X$.  Since $X$ is $n$-dimensional, $q$ is not the boundary of an $(n+1)$-dimensional chain.  Therefore, it would be a non-zero element of $\tilde{H}_n(X)$, a contradiction.  We obtain $\tilde{H}_i (Y) = 0$ for $i \ge n$, as we wanted.
\end{proof}

The next ingredient we need is the classical nerve lemma, attributed to Borsuk and Leray \cite{Borsuk:1948tz, Leray:1950wn}.

\begin{lemma}[Nerve lemma]
	Let $\mathcal{F}$ be a finite collection of open subsets in a paracompact topological space $X$.  If every non-empty intersection of sets in $\mathcal{F}$ is contractible, then $N(\mathcal{F})$ is homotopy equivalent to $\cup \mathcal{F}$.
\end{lemma}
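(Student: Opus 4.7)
The plan is to produce an explicit homotopy equivalence between $\cup \mathcal{F}$ and the geometric realization $|N(\mathcal{F})|$ (which I will write simply as $|N|$) by combining paracompactness with the contractibility hypothesis. Let $Y = \cup \mathcal{F}$ and write $v_F$ for the vertex of $N(\mathcal{F})$ corresponding to $F \in \mathcal{F}$.

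First I would build a map $\phi : Y \to |N|$ using a partition of unity. Since $X$ is paracompact and $\mathcal{F}$ is a finite open cover of $Y$, choose a subordinate partition of unity $\{\rho_F\}_{F \in \mathcal{F}}$, and set
\[
\phi(x) \;=\; \sum_{F \in \mathcal{F}} \rho_F(x)\, v_F.
\]
For each $x$, the set $\{F : \rho_F(x) > 0\}$ is contained in $\{F : x \in F\}$, which by definition spans a simplex of $N(\mathcal{F})$, so $\phi$ is well-defined and continuous.

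Next I would build $\psi : |N| \to Y$ by induction on the skeleta of $N$. Pick $\psi(v_F) \in F$ arbitrarily for each vertex. Suppose $\psi$ has been defined on the $(k-1)$-skeleton in such a way that, for every $j$-simplex $\sigma = \{v_{F_0}, \ldots, v_{F_j}\}$ with $j < k$, the image $\psi(\sigma)$ lies in $F_0 \cap \cdots \cap F_j$. Given a $k$-simplex $\sigma = \{v_{F_0}, \ldots, v_{F_k}\}$, the map $\psi|_{\partial \sigma}$ already has image in $\bigcap_i F_i$, and that intersection is nonempty and contractible by hypothesis, so $\psi|_{\partial \sigma}$ extends across $\sigma$ with image still in $\bigcap_i F_i$. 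This yields a continuous $\psi$ with the desired carrier condition.

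The main obstacle is showing that $\phi \circ \psi$ and $\psi \circ \phi$ are each homotopic to the respective identities, \emph{coherently} across all simplices, not just one at a time. The cleanest way I would handle this is to introduce the Mayer--Vietoris blow-up
\[
B \;=\; \bigsqcup_{\sigma \in N(\mathcal{F})} |\sigma| \times \bigcap_{F \in \sigma} F \;\Big/\sim,
\]
where the equivalence relation glues along face inclusions in the obvious way. It carries two natural projections $\pi_Y : B \to Y$ and $\pi_N : B \to |N|$. The fiber of $\pi_N$ over a point in the interior of $\sigma$ is $\bigcap_{F \in \sigma} F$, contractible by hypothesis; the fiber of $\pi_Y$ over $x$ is the geometric realization of the subcomplex $\{\sigma \in N : x \in \bigcap_{F \in \sigma} F\}$, which is a simplex. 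Paracompactness lets one apply a fiber-preserving lemma (e.g.\ the Quillen--Segal lemma on maps with contractible fibers over a paracompact base) to each projection, showing both are homotopy equivalences, and thus $Y \simeq |N|$ as required. Pushing through the step-by-step inductive construction of $\psi$ without the blow-up also works, but bookkeeping the homotopies on overlapping simplices is precisely where the argument gets delicate, and the blow-up packages exactly this coherence.
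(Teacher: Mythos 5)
The paper does not prove the Nerve Lemma; it cites it as a classical fact due to Borsuk and Leray, so there is no in-paper proof to compare against. Your sketch follows the standard modern route (partition-of-unity map $\phi$, inductive skeletal construction of $\psi$, and the Mayer--Vietoris blow-up $B$ with its two projections), which is the approach one finds in Segal's work and in textbook treatments such as tom Dieck's or Hatcher's, and the construction of $\phi$, $\psi$, and $B$ and the fiber computations are all correct.

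The genuine gap is the final step. There is no ``Quillen--Segal lemma'' asserting that a map with contractible fibers over a paracompact base is a homotopy equivalence, and the statement as phrased is false in general: $\pi_N$ is not a fibration (the fiber over the interior of a face $\tau \subsetneq \sigma$ is $U_\tau$, strictly larger than $U_\sigma$), so contractibility of the fibers alone does not let you conclude anything. What actually works is different for the two projections. For $\pi_Y$, the partition of unity gives an explicit section $s(x) = (\phi(x), x)$, and $s \circ \pi_Y$ is fiberwise straight-line homotopic to the identity because the fibers of $\pi_Y$ are genuine simplices (convex); this makes $\pi_Y$ a homotopy equivalence by hand, with no appeal to a general lemma. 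For $\pi_N$, you need an induction on the skeleta of $N$: assuming $\pi_N^{-1}(|N^{(k-1)}|) \to |N^{(k-1)}|$ is an equivalence, attach each $k$-simplex $\sigma$ via the pushout square and use that $U_\sigma$ is contractible (so $|\sigma| \times U_\sigma \to |\sigma|$ is an equivalence) together with the gluing lemma for cofibrations. Equivalently, identify $B$ as the homotopy colimit of the diagram $\sigma \mapsto U_\sigma$ over the face poset of $N$ and compare to the constant diagram. Either way, the contractibility hypothesis is used in a structured induction, not in a black-box ``contractible fibers'' lemma; your proof as written replaces exactly the delicate coherence step with a citation that does not exist.
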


\subsection{Affine images revisited}\label{subsection-affine-images-revisited}
	As we saw in section \ref{example-matrices}, the affine images of a set $K \subset \rr^d$ given by symmetric positive-definite $d\times d$ matrices had a nice parametrization.  Consider
	\[
	\mathcal{K} = \{(a, A) : a \in \rr^d, \ A \mbox{ is a symmetric positive definite $d \times d$ matrix and } \det A = 1\}.
	\]
	Notice that the dimension of this space is $\frac{d(d+3)}2-1$.  Given $K \subset \rr^d$ an open set of volume one and $M \subset \rr^d$ a convex set, we define
	
	\[
	S_K(M, \vol =1) = \{(a,A) \in \mathcal{K} : a + AK \subset M\}.
	\]

\begin{lemma}\label{lemma-contractible1}
For $\mathcal{K}$, $K$, and $M$ defined as above, the set $S_K(M, \vol =1)$ is either empty or contractible.
\end{lemma}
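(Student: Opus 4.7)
My plan is to realize $S_K(M, \vol = 1)$ as a deformation retract of a convex, hence contractible, set. The main obstacle is that the locus $\{\det A = 1\}$ cuts out a non-convex slice of $\rr^d \times \mathcal{P}_d$; my fix will be to relax the constraint to $\det A \ge 1$ (which is convex by Minkowski's determinant inequality) and then to rescale back through a continuous homotopy.

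Two harmless reductions on $K$ come first. Since $M$ is convex, $a + AK \subset M$ is equivalent to $a + A\,\conv(K) \subset M$, so replacing $K$ by $\conv(K)$ leaves $S_K(M, \vol = 1)$ unchanged as a subset of $\mathcal{K}$. Next, for any $k_0 \in K$, the homeomorphism $(a, A) \mapsto (a + A k_0, A)$ of $\mathcal{K}$ sends $S_K(M, \vol = 1)$ bijectively onto $S_{K - k_0}(M, \vol = 1)$. Both reductions preserve openness, so I may assume $K$ is a bounded open convex set with $0 \in K$; in particular $sK \subset K$ for every $s \in [0, 1]$.

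I then set
\[
T = \{(a, A) \in \rr^d \times \mathcal{P}_d : a + AK \subset M \text{ and } \det A \ge 1\}.
\]
The set $T$ is convex: the containment condition is handled by the one-line Minkowski computation already given in Section \ref{example-matrices}, and $\{A \in \mathcal{P}_d : \det A \ge 1\}$ is convex by the Minkowski determinant inequality $\det(\lambda A + (1-\lambda) B)^{1/d} \ge \lambda (\det A)^{1/d} + (1-\lambda)(\det B)^{1/d}$. Next I consider the homotopy
\[
H_t(a, A) = \bigl(a,\ s_t A\bigr), \qquad s_t = (1 - t) + t(\det A)^{-1/d}, \qquad t \in [0, 1].
\]
Because $\det A \ge 1$ forces $s_t \in [(\det A)^{-1/d}, 1]$, the star-shapedness of $K$ at $0$ gives $s_t A K = A(s_t K) \subset AK$, so $a + s_t A K \subset M$; and $\det(s_t A) = s_t^d \det A \ge (\det A)^{-1} \det A = 1$. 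Hence $H_t$ is a continuous map $T \to T$, with $H_0 = \mathrm{id}$, with $H_1(T) \subset T \cap \{\det A = 1\} = S_K(M, \vol = 1)$, and with $H_t$ fixing $S_K(M, \vol = 1)$ pointwise (since there $s_t \equiv 1$). This exhibits $S_K(M, \vol = 1)$ as a strong deformation retract of the convex set $T$; whenever it is non-empty, so is $T$, and contractibility follows.
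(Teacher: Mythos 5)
Your proof is correct, and it uses the same two essential ingredients as the paper---log-concavity of the determinant (via Minkowski's determinant inequality) and the shrinking observation $sK \subset K$ for $s \in [0,1]$---but the overall structure is genuinely different. The paper fixes a base point $(a,A) \in S_K(M,\vol=1)$ and directly writes a strong deformation retraction of $S_K(M,\vol=1)$ onto that point: it linearly interpolates the pair $(b,B)$ toward $(a,A)$ and normalizes the matrix by $(\det C_\lambda)^{-1/d}$ at each stage, verifying that the resulting pair still lies in $S_K(M,\vol=1)$. You instead enlarge $S_K(M,\vol=1)$ to the set $T$ defined by the convex constraint $\det A \ge 1$, note $T$ is convex (hence contractible), and then retract $T$ onto the determinant-one slice. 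Both proofs are valid; yours is arguably cleaner in that the contractibility of the ambient set is completely automatic, while the paper's is more self-contained (a single explicit homotopy with no auxiliary set). One genuine advantage of your write-up is that you make explicit the reduction to $K$ convex with $0 \in K$, which is needed for the step $s_t A K \subset AK$; the paper uses the same shrinking step in the line $c_\lambda + \bigl((\det C_\lambda)^{-1/d} C_\lambda\bigr)K \subset c_\lambda + C_\lambda K$ but leaves the star-shapedness of $K$ about the origin implicit.
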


\begin{proof}
	Let $(a,A) \in S_K(M, \vol = 1)$ be fixed.  We are going to explicitly give a strong deformation retract of $S_K(M, \vol = 1)$ to $\{(a,A)\}$.  Let $(b,B)$ be any other element of $S_K(M, \vol = 1)$, and $\lambda \in [0,1]$.  We define $f_{\lambda}(b,B) \in \mathcal{K}$ as
	\begin{align*}
		C_{\lambda} & = \lambda A + (1-\lambda)B \\
		c_\lambda & = \lambda a + (1-\lambda)b \\
		f_{\lambda} (b,B) & = \left(c_{\lambda}, \frac{1}{(\det C_{\lambda})^{1/d}}C_{\lambda}\right).
	\end{align*}
	
	We know from Section \ref{example-matrices} that $c_{\lambda} + C_{\lambda}K \subset M$.  Since the determinant is log-concave in the space of symmetric positive-definite matrices, we have that
	\[
	\det C_{\lambda} \ge \det(A)^{\lambda}\det(B)^{1-\lambda} = 1.
	\]
	Therefore, $c_{\lambda} + \left( \frac{1}{(\det C_{\lambda})^{1/d}}C_{\lambda}\right) K \subset c_\lambda + C_\lambda K \subset M$.  In other words, $f_\lambda (b, B) \in S_K(M, \vol =1 )$.  Notice that this function is continuous on $(b,B)$ and $\lambda$, that it is equal to $(b,B)$ if $\lambda = 0$ and equal to $(a,A)$ if $\lambda = 1$.  Therefore, it is the retract we wanted.
\end{proof}

\begin{corollary}\label{corollary-leray}
	Let $n = \frac{d(d+3)}2-1$.  Then, $\tilde{H}_n(\mathcal{K})=0$.
\end{corollary}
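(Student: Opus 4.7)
The plan is to prove the stronger statement that $\mathcal{K}$ is contractible; this forces every reduced homology group to vanish, and in particular gives $\tilde{H}_n(\mathcal{K})=0$.

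The key observation is that $\mathcal{K}$ is already an instance of the sets studied in Lemma \ref{lemma-contractible1}. Pick any open set $K_0\subset\rr^d$ with $\vol(K_0)=1$ and take $M=\rr^d$; every affine image $a+AK_0$ is trivially contained in $M$, so $S_{K_0}(\rr^d,\vol=1)=\mathcal{K}$. Since $\mathcal{K}$ is non-empty (it contains $(0,I)$), Lemma \ref{lemma-contractible1} yields the contractibility of $\mathcal{K}$ immediately.

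For a self-contained argument I would simply replay the retraction from that lemma. Fix $(0,I)\in\mathcal{K}$ and, for $(b,B)\in\mathcal{K}$ and $\lambda\in[0,1]$, set $C_\lambda=\lambda I+(1-\lambda)B$ and $h_\lambda(b,B)=\bigl((1-\lambda)b,\,(\det C_\lambda)^{-1/d}\,C_\lambda\bigr)$. Since the cone $\mathcal{P}_d$ of symmetric positive-definite matrices is convex, $C_\lambda\in\mathcal{P}_d$, and the normalization $(\det C_\lambda)^{-1/d}$ is well-defined and jointly continuous in $(b,B,\lambda)$; thus $h_\lambda(b,B)\in\mathcal{K}$, $h_0=\mathrm{id}_\mathcal{K}$, and $h_1\equiv(0,I)$. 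This gives a strong deformation retract of $\mathcal{K}$ onto a point.

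There is essentially no obstacle here: once one notices that the corollary is the special case $M=\rr^d$ of Lemma \ref{lemma-contractible1}, the work is already done. Dimensional bookkeeping confirms why $n=d(d+3)/2-1$ is the relevant value: $\mathcal{P}_d$ has dimension $d(d+1)/2$, intersecting with $\{\det=1\}$ drops one dimension, and the $\rr^d$ factor of translations contributes $d$ more, for a total of $d(d+3)/2-1=n$. So $\tilde{H}_n(\mathcal{K})=0$ is the top-dimensional vanishing that will later be fed into Lemma \ref{lemma-leray}.
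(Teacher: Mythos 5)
Your proposal is correct and coincides with the paper's argument: both identify $\mathcal{K}=S_K(\rr^d,\vol=1)$ for any fixed volume-one set $K$ and invoke Lemma \ref{lemma-contractible1} to conclude contractibility, hence vanishing of all reduced homology. The explicit replay of the retraction and the dimension count are fine but not additional content beyond the paper's proof.
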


\begin{proof}
	The space $\mathcal{K}$ is $n$-dimensional.  Take any set $K \subset \rr^d$ of volume one.  Then, $\mathcal{K} = S_K(\rr^d, \vol = 1)$, which is contractible by Lemma \ref{lemma-contractible1}.  Therefore, $\mathcal{K}$ has trivial homology.  In particular, $\tilde{H}_n(\mathcal{K}) = 0$. 
\end{proof}

Now we are ready to prove a slightly stronger version of Theorem \ref{theorem-colorfulhellyellipsoids}.  We denote the space of $d \times d$ positive definite symmetric matrices by $\mathcal{P}_d$.

\begin{theorem}\label{theorem-strong-matroid-affine}
	Let $M$ be a matroid on a set $V$ of vertices with rank function $\rho$, and let $K \subset \rr^d$ be an open set of volume one.  For each $v$ in $V$, we are given an open convex set $F_v$ in $\rr^d$.  We know that for each set $V' \subset V$ that is independent in $M$, there exist $a \in \rr^d$ and $A \in \mathcal{P}_{d}$ such that $\det A = 1$ and $a + AK \subset \cap_{v \in V'} F_v$.  Then, there exists a set $\tau \subset V$ such that $\rho (V \setminus \tau) \le d(d+3)/2-1$ and for which there exist $a \in \rr^d$ and $A \in \mathcal{P}_{d}$ such that $\det A = 1$ and $a+AK \subset \cap_{v \in \tau} F_v$.
\end{theorem}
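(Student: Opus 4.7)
The plan is to reduce to the topological colorful Helly theorem of Kalai and Meshulam, using the parametrization space $\mathcal{K} = \{(a, A) : a \in \rr^d,\ A \in \mathcal{P}_d,\ \det A = 1\}$, which has dimension $n := d(d+3)/2 - 1$. For each $v \in V$, I would define the open set $S_v := S_K(F_v, \vol = 1) \subset \mathcal{K}$ of parameters $(a, A)$ with $a + AK \subset F_v$; openness follows from the openness of $F_v$ together with a mild boundedness assumption on $K$. For any $W \subset V$ one has $\bigcap_{v \in W} S_v = S_K\bigl(\bigcap_{v \in W} F_v,\, \vol = 1\bigr)$, and since $\bigcap_{v \in W} F_v$ is open and convex, Lemma \ref{lemma-contractible1} tells us that this intersection is either empty or contractible. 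The hypothesis on independent sets of $M$ is precisely that $\bigcap_{v \in V'} S_v \neq \emptyset$ whenever $V' \in M$; that is, the matroidal complex $M$ is contained in the nerve $N(\{S_v\}_{v \in V})$.

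Next I would verify that this nerve is $n$-Leray. By the Nerve Lemma, which applies because all non-empty intersections of $S_v$'s are contractible, for any $W \subset V$ the induced subcomplex $N(\{S_v\}_{v \in W})$ is homotopy equivalent to the open subset $\bigcup_{v \in W} S_v \subset \mathcal{K}$. Corollary \ref{corollary-leray} gives $\tilde{H}_n(\mathcal{K}) = 0$, and since $\mathcal{K}$ is $n$-dimensional, Lemma \ref{lemma-leray} implies that every open subset of $\mathcal{K}$ has vanishing reduced homology in degrees $\geq n$. This certifies the $n$-Leray property of the nerve.

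Applying the Kalai--Meshulam theorem to the inclusion $M \subset N(\{S_v\}_{v \in V})$ then produces a simplex $\tau$ of the nerve with $\rho(V \setminus \tau) \leq n = d(d+3)/2 - 1$. By definition of the nerve, $\bigcap_{v \in \tau} S_v \neq \emptyset$, and any $(a, A)$ in this intersection yields an affine image $a + AK \subset \bigcap_{v \in \tau} F_v$ with $A \in \mathcal{P}_d$ and $\det A = 1$, as required.

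The main obstacle I anticipate is the topological bookkeeping: making sure the $S_v$ are honestly open in $\mathcal{K}$ so that both the Nerve Lemma and Lemma \ref{lemma-leray} apply to the relevant unions, and that the contractible-intersection hypothesis of the Nerve Lemma extends uniformly over all induced subcomplexes. Once openness and uniform contractibility are in place, the Kalai--Meshulam machinery handles the combinatorics with no further work. The geometric heart of the argument is really the contractibility statement in Lemma \ref{lemma-contractible1}, which exploits the log-concavity of $\det$ on $\mathcal{P}_d$ to keep a convex combination of two volume-one matrices volume-at-least-one after a determinant rescaling; this is what makes the parametrization well-behaved for the topological Helly theorem.
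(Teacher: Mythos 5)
Your proposal follows the paper's own argument essentially verbatim: defining $G_v = S_K(F_v, \vol = 1) \subset \mathcal{K}$, invoking Lemma \ref{lemma-contractible1} to certify contractibility of all non-empty intersections, applying the Nerve Lemma together with Corollary \ref{corollary-leray} and Lemma \ref{lemma-leray} to establish that the nerve is $\bigl(d(d+3)/2 - 1\bigr)$-Leray, and then concluding with the Kalai--Meshulam theorem. The only addition is your explicit attention to openness of the $S_v$ in $\mathcal{K}$, which the paper leaves implicit but which is indeed why the $F_v$ and $K$ are assumed open in the statement.
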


The theorem above has some additional flexibility.  For example, the determinant can be changed to other $\min$-concave functions $f$ on $\mathcal{P}_d$.  The only additional condition we require is that $f$ is continuous and $f(A) \ge f(\alpha A)$ for $0 < \alpha < 1$, so the shrinking argument works to prove contractibility.  We describe the consequences of using the determinant (which is log-concave) and the trace.  These two functions have a precise geometric meaning.  The determinant of $A$ is proportional to the volume of $a + AK$.  If $K$ is a ball, then the trace of $A$ is proportional to the sum of the lengths of the axes of the ellipsoid $a+AK$.  However, there is a vast number of known concave functions to choose from \cite{Lieb:1973eu, Ando:1979ju, Carlen:2010Ib, Hiai:2013kz}, which may lead to interesting Helly-type theorems.

\begin{proof}
  Let $n = d(d+3)/2-1$.  For each $v \in V$, we consider the set
  	\[
	G_v = S_K(F_v, \vol = 1) \subset \mathcal{K}.
	\]
	
	Let $X = N(\{G_v : v \in V\})$, which can be considered as a simplicial complex on $V$.  If $V' \subset V$ is a face of $X$, it means that for $\mathcal{G}' = \{G_v : v \in V'\}$ we have $\cap \mathcal{G}' \neq \emptyset$.  This allows us to describe the intersection as
	
	\[
	\cap \mathcal{G}' = \bigcap_{v \in V'} G_v = \bigcap_{v \in V'} S_K(F_v, \vol =1) = S_K\left(\left(\bigcap_{v \in V'} F_v\right), \vol = 1\right).
	\]


	Therefore, $\cap \mathcal{G}'$ is contractible.  This means we can apply the nerve lemma, and $N(\mathcal{G})$ is homotopy equivalent to $\cup{\mathcal{G}}$.  Moreover, the complex induced by $V'$ is homotopy equivalent to $\cup \mathcal{G}'$, whose reduced homology groups vanish starting from $\tilde{H}_n(\cdot)$.  Therefore, $N(\mathcal{G})$ is $n$-Leray.  Now we can apply the Kalai-Meshulam theorem to $N(\mathcal{G})$, giving us the desired result.
	
	\end{proof}
	
	The fact that Corollary \ref{corollary-colorfulhellyellipsoids} is optimal also shows that theorems \ref{theorem-colorfulhellyellipsoids} and \ref{theorem-strong-matroid-affine} are optimal.  Let us prove Corollary \ref{corollary-colorfulhellyellipsoids}.
	
	\begin{proof}[Proof of Corollary \ref{corollary-colorfulhellyellipsoids}]
	
	\textbf{Upper bound.}
	Notice that if $B_d$ is a ball of volume $1$, then the set $a + AB_d$ where $a \in \rr^d$, $A \in \mathcal{P}_d$ parametrizes all ellipsoids in $\rr^d$.  The volume of the ellipsoid $a + AB_d$ is precisely $\det (A)$.
	Let $\mathcal{F} = \mathcal{F}_1 \cup \ldots \cup \mathcal{F}_{n}$, where sets are counted with multiplicity.  The partition induced by the $\mathcal{F}_i$ creates a matroid structure on $\mathcal{F}$.  We can apply Theorem \ref{theorem-colorfulhellyellipsoids} and obtain the upper bound of Corollary \ref{corollary-colorfulhellyellipsoids}.

	\textbf{Lower bound.}  We show how to construct a family $\mathcal{F}$ of $\frac{d(d+3)}{2}$ convex sets such that $\cap \mathcal{F}$ does not contain an ellipsoid of volume greater than $1$, but the intersection of any $\frac{d(d+3)}{2}-1$ or fewer sets of $\mathcal{F}$ does contain an ellipsoid of volume strictly greater than $1$.  The construction we made turned out to be the same as Dam\'asdi's \cite{Damasdi:2017ta}, but we include it for completeness.   If we take $\mathcal{F}_1 = \ldots = \mathcal{F}_n = \mathcal{F}$ and scale everything appropriately, we have the desired counter-example.
	
	It is known that for most convex sets, the number of contact points with its John ellipsoid is precisely $\frac{d(d+3)}{2}$ \cite{Gruber:1988gn, Gruber:2011dx}.  Let $K$ be such a convex body.  By applying an appropriate affine transformation, we can assume that the John ellipsoid of $K$ is the unit ball $\tilde{B}_d \subset \rr^d$.  Let $n = d(d+3)/2$ as before and consider $u_1, \ldots, u_n$ the contact points of $K$ with $\tilde{B}_d$.  The classical characterization of sets whose John ellipsoid is the unit ball is that there are non-negative coefficients $\lambda_1, \ldots, \lambda_n$ such that
	\begin{align*}
		\sum_{i=1}^n \lambda_i (u_i \otimes u_i )&= I_{d \times d} \\
		\sum_{i=1}^n \lambda_i u_i & = 0
	\end{align*}
	The set of matrices of the form $(u, 1) \otimes u$, where $\otimes$ denotes the tensor product, lies in an $n$-dimensional affine space of the space of $(d+1) \times d$ matrices.  If we also restrict $u$ to be a unit vector, this makes the trace of $u \otimes u$ to be equal to $1$, so $(u,1) \otimes u$ is in an $(n-1)$-dimensional affine space.  Moreover, the trace shows that $\sum \lambda_i = d$.  Therefore, we can modify the expression above to 
	\begin{align*}
		\sum_{i=1}^n \left( \frac{\lambda_i}{d}\right) (u_i,1) \otimes u_i = \frac1d J.
	\end{align*}
The matrix $J$ is a $(d+1)\times d$ matrix formed by a $d\times d$ identity matrix with an extra row of zeros, and the expression above is a convex combination.  This is consistent with Carath\'eodory's theorem: $n$ elements are expected to be necessary to contain the point $(1/d)J$ in their convex hull if we choose them from an $(n-1)$-dimensional space.  What Gruber's results show is that this often optimal: for most convex sets the $n$-tuple $\{(u_i, 1) \otimes u_i : i =1,\ldots, n\}$ is critical, as none of its proper subsets contains $(1/d)J$ in its convex hull.

Now, given this $n$-tuple of contact points $u_1, \ldots, u_n$, consider $\mathcal{F}$ the family of halfspaces of the form $\{ x : \langle x, u_i\rangle \le 1\}$ for each $i$.  By the characterization of the John ellipsoid, the unit ball is the maximal ellipsoid in $\cap \mathcal{F}$.  However, for any proper subset $\mathcal{F}' \subset \mathcal{F}$, we have that $\cap \mathcal{F}'$ has fewer contact points with the unit ball centered at the origin.  For those contact points $u$, the convex hull of the points $(u,1) \otimes u$ cannot contain $(1/d)J$, so $B_d$ is not the maximal ellipsoid of this set.  Since $B_d \subset \cap \mathcal{F}'$, we have that $\mathcal{F}'$ must contain an ellipsoid of strictly larger volume.
\end{proof}

Now we are ready to prove Theorem \ref{theorem-truecolorful-volumentric}.

\begin{proof}[Proof of Theorem \ref{theorem-truecolorful-volumentric}]
	We are going to use Brazitikos' volumetric Helly theorem \cite{Brazitikos:2017ts} for this proof.  If one goes through the proof of Brazitikos' result, he actually proves that \textit{given a finite family $\mathcal{F}$, if the intersection of every $2d$ of its sets has volume at least one, then $\cap \mathcal{F}$ contains an ellipsoid of volume $d^{-3d/2+o(1)}$}.  We are going to use Brazitikos' ellipsoid to our advantage.
	
	If we take an independent set $V'$, the condition of the theorem implies that the intersection of every $2d$ of the convex sets represented by the vertices in $V'$ has volume greater than or equal to one.  Therefore, $\cap_{v \in V'} F_v$ contains an ellipsoid of volume $d^{-3d/2+o(1)}$.  Now we can apply Theorem \ref{theorem-colorfulhellyellipsoids} and conclude that for some set $\tau$ with $\rho(V \setminus \tau)\le d(d+3)/2-1$, we have that $\cap_{v \in \tau} F_v$ also contains an ellipsoid of volume $d^{-3d/2+o(1)}$.
\end{proof}

\subsection{Further variations and diameter results}\label{secion-more-ellipsoids}

In this subsection, we discuss how we can modify the setting used in the topological proofs to obtain other variations of Helly's theorem.   We only describe standard Helly theorems, although every theorem has a general matroid version.  Most of these require minimal modifications to the arguments presented earlier.  Recall that $\mathcal{P}_d$ stands for the set of symmetric positive definite matrices.  In this subsection, the set $\mathcal{F}$ will always be a finite family of convex sets in $\rr^d$.

\begin{example}[Translates of a convex set]\label{example-translates}
	Instead of pairs $(a, A)$ in $\rr^d \times \mathcal{P}_d$, consider only the pairs of the form $(a,I)$, where $I$ the identity matrix.  The dimension of this space is equal to $d$.  Let $K$ be a fixed set in $\rr^d$.  We consider again
	\begin{align*}
		S_K(M)=\{(a,I): & a \in \rr^d, a + I K \subset M \}.
	\end{align*}
	  The arguments above re-prove the folklore theorem \textit{``If the intersection of every $d+1$ or fewer sets of $\mathcal{F}$ contains a translate of $K$, then $\cap \mathcal{F}$ contains a translate of $K$''}.  Of course, the identity matrix is superfluous in this case, but it shows how our methods relate to this classical result.
\end{example}

The example above has the particular advantage that $S_K(M)$ is easy to visualize.  If $M+T+K$ for some convex set $T$, then $S_K(M) = T$.  

\begin{example}[Axis-parallel ellipsoids]\label{example-diagonal}
	Instead of all pairs $(a, A)$ in $\rr^d \times \mathcal{P}_d$, consider only the pairs where $A$ is a  positive definite diagonal matrix.  The resulting space now has dimension $2d$.  We can apply the topological Helly theorem to the family of sets
	\begin{align*}
		S(M)=\{(a,A): & a \in \rr^d, a + AB_d \subset M, A \mbox{ is a diagonal matrix}, \det(A) = 1 \}.
	\end{align*}   This shows that ``If the intersection of every $2d$ or fewer sets of $\mathcal{F}$ contains an ellipsoid of volume one with axes parallel to the canonical basis, so does $\cap \mathcal{F}$''.
\end{example}

We can modify this example to prove the following diameter Helly theorem version with boxes.


\begin{theorem}[Helly for box diameter]\label{theorem-boxdiameter}
	Let $\mathcal{F}$ be a finite family of convex sets in $\rr^d$ such that the intersection of every $2d$ or contains an axis-parallel box of diameter one.  Then, $\cap \mathcal{F}$ contains an axis-parallel box of diameter $d^{-1/2}$.
\end{theorem}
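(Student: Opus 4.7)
The plan is to parametrize axis-parallel boxes by their center and half-side lengths, replace the Euclidean-diameter hypothesis by a condition on the $\ell^1$-diameter (which is linear in the parameters), and apply the classical Helly theorem in a $(2d-1)$-dimensional affine subspace of parameter space.

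An axis-parallel box with center $a \in \rr^d$ and vector of half-sides $\alpha \in [0,\infty)^d$ has Euclidean diameter $2\|\alpha\|_2$ and $\ell^1$-diameter $2\|\alpha\|_1$. Since $\|\alpha\|_1 \ge \|\alpha\|_2$ whenever $\alpha \ge 0$, the hypothesis implies that the intersection of every $2d$ or fewer sets in $\mathcal F$ contains an axis-parallel box of $\ell^1$-diameter at least $1$. Scaling all half-sides of such a box uniformly by a factor in $(0,1]$ keeps the box inside the intersection and makes its $\ell^1$-diameter exactly $1$.

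For each $F \in \mathcal F$ define
\[
S(F) = \Bigl\{(a,\alpha) \in \rr^d \times [0,\infty)^d : \sum_{i=1}^d \alpha_i = \tfrac{1}{2},\ a + \prod_{i=1}^d[-\alpha_i,\alpha_i] \subset F\Bigr\}.
\]
Because $F$ is convex, the box is contained in $F$ iff each of its $2^d$ corners is; the condition that the corner $(a_1 + \sigma_1\alpha_1,\ldots,a_d + \sigma_d\alpha_d)$ lies in $F$ is the preimage of $F$ under a linear map in $(a,\alpha)$, and hence convex. Together with the affine constraint $\sum_i \alpha_i = \tfrac12$ and the half-spaces $\alpha_i \ge 0$, this makes $S(F)$ a convex subset of the $(2d-1)$-dimensional affine subspace $\{\sum_i \alpha_i = \tfrac12\}$ of $\rr^d \times \rr^d$. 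The shrinking observation above shows that every $2d$ of the sets $\{S(F)\}_{F\in\mathcal F}$ share a common point, so Helly's theorem inside this $(2d-1)$-dimensional subspace gives a common point $(a,\alpha) \in \bigcap_{F\in\mathcal F} S(F)$.

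This point describes an axis-parallel box of $\ell^1$-diameter $1$ contained in $\bigcap \mathcal F$; since $\|\alpha\|_1 = \tfrac12$, the Cauchy--Schwarz inequality yields $\|\alpha\|_2 \ge \|\alpha\|_1/\sqrt d = 1/(2\sqrt d)$, so the Euclidean diameter of the box is at least $d^{-1/2}$, as required. The main conceptual step is the switch from the Euclidean to the $\ell^1$-diameter: the former gives a nonconvex ``diameter $\ge 1$'' constraint on $\alpha$ that is not preserved under Minkowski averages of boxes, whereas the latter becomes a linear constraint that fits cleanly into the Minkowski-parametrization framework of the previous section. The factor $d^{-1/2}$ is precisely the price of this norm comparison, and no individual step poses a serious obstacle.
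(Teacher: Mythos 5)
Your proof is correct and follows essentially the same route as the paper: lift boxes to a $(2d-1)$-dimensional parameter space by imposing a linear constraint (your $\sum_i\alpha_i=\tfrac12$ is the trace constraint the paper uses), apply Helly there, and convert back to Euclidean diameter by an $\ell^1$-$\ell^2$ norm comparison (Cauchy--Schwarz for you, AM--QM in the paper -- the same inequality). The one place you diverge is in the Helly machinery invoked: the paper phrases its proof as a modification of Example~\ref{example-diagonal}, which lives in Section~\ref{section-topological} and formally appeals to the Kalai--Meshulam topological colorful Helly theorem via a contractibility argument, whereas you observe directly that $S(F)$ is \emph{convex} (containment of a box is the conjunction of $2^d$ linear conditions on $(a,\alpha)$, and the trace/$\ell^1$ constraint is affine) and so the classical Helly theorem in $\rr^{2d-1}$ already suffices. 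This is a genuine, if small, simplification: once the determinant is replaced by the trace, the topological apparatus is overkill, and your argument makes that explicit. It also automatically yields the colorful, fractional, and $(p,q)$ variants that come with convex lifts, as discussed in Section~\ref{section-geometric}.
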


\begin{proof}
	In Example \ref{example-diagonal}, replace $B_d$ by an axis-parallel hypercube of side-length $\frac{1}{d}$, and the determinant for the trace.  The intersection of any $2d$ or fewer sets of our family contains an axis-parallel box whose diameter is at least one.  For this box the sum of its side-lengths must also be at least one.  Therefore, by our modification of Example \ref{example-diagonal}, $\cap \mathcal{F}$ must contain an axis-parallel box whose side-lengths add up to one.  A simple application of the arithmetic mean - quadratic mean inequality shows that the diameter of this box is at least $d^{-1/2}$.
\end{proof}

We should note that $O(d^{-1/2})$ is the B\'ar\'any-Katchalski-Pach conjecture for Helly's theorem for the diameter if we know that the intersection of every $2d$ sets has diameter greater than or equal to one \cite{Barany:1982ga}.  Brazitikos has confirmed the conjecture for families of centrally symmetric sets \cite{Brazitikos:2016ja}.  We just confirmed their conjecture for diameters realized by axis-parallel boxes.  We can also confirm it for the ``increasing'' diameter.  Given two vectors $x=(x_1, \ldots, x_d), y=(y_1, \ldots, y_d)$, we say that $x \ge y$ if $x_i \ge y_i$ for all $i\in\{1,2,\ldots, d\}$.  Given a compact set $K \subset \rr^d$, we define its increasing diameter as $\max \{||x-y||: x \ge y, x \in K, y\in K\}$.  The norm used in this definition is the $\ell_2$-norm.

\begin{theorem}[Helly for increasing diameter]\label{theorem-increasingdiameter}
	Let $\mathcal{F}$ be a finite family of compact convex sets in $\rr^d$.  If the intersection of every $2d$ of them has an increasing diameter greater than or equal to one, then the increasing diameter of $\cap \mathcal{F}$ is greater than or equal to $d^{-1/2}$.
\end{theorem}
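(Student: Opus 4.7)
The plan is to adapt the parametrization idea from Theorem \ref{theorem-boxdiameter}: rather than axis-parallel boxes, I would parametrize \emph{increasing segments}, meaning segments whose direction vector lies in $\rr^d_{\ge 0}$. Any such segment is encoded as a pair $(a,v)\in\rr^d\times\rr^d_{\ge 0}$, representing $\{a+tv : t\in[0,1]\}$, giving a $2d$-dimensional parameter space. Since any pair of points $y\le x$ in a convex set $K$ witnesses an increasing segment from $y$ to $x$ contained in $K$, the increasing diameter of $K$ equals the supremum of $\|v\|_2$ over valid parameters $(a,v)$.

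For each $F\in\mathcal{F}$ I would introduce
\[
S(F) = \{(a,v)\in\rr^d\times\rr^d_{\ge 0} \;:\; a\in F,\ a+v\in F,\ v_1+\cdots+v_d=1\}.
\]
A direct check using convexity of $F$ and linearity of the coordinate constraints shows $S(F)$ is a convex subset of the $(2d-1)$-dimensional affine subspace of $\rr^{2d}$ cut out by $\sum v_i=1$. Inside this affine subspace, Helly's theorem demands checking subfamilies of size $2d$.

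To verify the Helly hypothesis, pick any $F_1,\ldots,F_{2d}\in\mathcal{F}$; by assumption there exist $y\le x$ in $\bigcap_i F_i$ with $\|x-y\|_2\ge 1$. Let $v_0:=x-y\in\rr^d_{\ge 0}$. For a nonnegative vector, $\|v_0\|_1 \ge \|v_0\|_2 \ge 1$, so setting $v := v_0/\|v_0\|_1$ yields $\|v\|_1=1$ with $v\ge 0$, and
\[
y+v = y + \frac{1}{\|v_0\|_1}(x-y)
\]
is a convex combination of $y$ and $x$, hence still lies in every $F_i$. Thus $(y,v)\in\bigcap_i S(F_i)$. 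Applying Helly's theorem inside the $(2d-1)$-dimensional affine subspace produces a common point $(a,v)\in\bigcap_{F\in\mathcal{F}} S(F)$; then $a, a+v \in \cap\mathcal{F}$ with $a+v\ge a$, and by Cauchy--Schwarz
\[
\|v\|_2 \ge \frac{\|v\|_1}{\sqrt{d}} = \frac{1}{\sqrt{d}}.
\]

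The main obstacle, as in Theorem \ref{theorem-boxdiameter}, is shaving the Helly number from the naive $2d+1$ down to $2d$, and this is accomplished by imposing the linear normalization $\|v\|_1=1$ rather than the half-space condition $\|v\|_1\ge 1$, trimming one dimension off the parameter space. The cost is the standard $\ell_1$--$\ell_2$ comparison, which produces the final factor of $d^{-1/2}$; no topological machinery is required.
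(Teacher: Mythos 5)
Your proof is correct and follows essentially the same route as the paper: parametrize increasing segments by a point in $\rr^d\times\rr^d_{\ge 0}$, impose the linear normalization $\|v\|_1=1$ to drop one dimension (the paper phrases this as replacing the determinant by the trace in its diagonal-matrix parametrization, applied to a segment in the all-ones direction), apply ordinary Helly in the resulting $(2d-1)$-dimensional affine space, and finish with the $\ell_1$--$\ell_2$ comparison. Your formulation is a bit more self-contained by bypassing the matrix machinery, but the underlying argument is the same.
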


\begin{proof}
	We modify Example \ref{example-diagonal} again, but now we replace $B_d$ by a segment in the direction $(1,\ldots, 1)$ of unit length in the $1$-norm, and the determinant by the trace.  For any $2d$ or fewer sets of $\mathcal{F}$, their intersection contains an increasing interval of $2$-norm equal to $1$.  Since the $1$-norm of this interval must be at least one, we can apply the resulting Helly theorem from the example and obtain an increasing segment in $\cap \mathcal{F}$ of $1$-norm equal to $1$.  Finally, the $2$-norm of this interval is at least $d^{-1/2}$, as we wanted to prove.
\end{proof}

Theorems \ref{theorem-boxdiameter} and \ref{theorem-increasingdiameter} are similar, yet neither seems to directly imply the other.  They do suggest that the issues for the quantitative theorems for the diameter may be due to the norm selected.

Whenever we have a parametrization of a family of convex sets $D : \rr^l \to \mathcal{C}$ such that $D(x + y) = D(x) \oplus D(y)$, we also get a diameter Helly theorem.  Let us give as an example the result for $H$-convex sets.

\begin{theorem}[Diameter Helly for $H$-convex sets]\label{theorem-diameter-xue}
	Let $H \subset S^{d-1}$ be a finite set of directions for which $H$-convex sets are closed under Minkowski sum.  Let $\mathcal{F}$ be a finite family of convex sets in $\rr^d$.  If the intersection of every $|H|$ or fewer sets of $\mathcal{F}$ contains an $H$-convex set of diameter greater than or equal to one, then $\cap \mathcal{F}$ contains an $H$-convex set of diameter greater than or equal to $|H|^{-1/2}$.
\end{theorem}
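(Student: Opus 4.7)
The approach parallels Theorem~\ref{theorem-boxdiameter}: we use the Minkowski parametrization of $H$-convex sets (from the claim preceding Theorem~\ref{theorem-hconvex}) together with a linear surrogate for diameter. Let $D\colon \rr^{|H|} \to \mathcal{C}$ be this parametrization, so that $\lambda_i = h_{v_i}(D(\lambda))$ and $D(\alpha a + \beta b) = \alpha D(a) \oplus \beta D(b)$, and let $L(\lambda) = \sum_{i=1}^{|H|} \lambda_i$; $L$ is the natural analog of the trace in Example~\ref{example-diagonal}, and is linear under Minkowski convex combinations induced by $D$.

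For each $F \in \mathcal{F}$, I would set
\[
S(F) = \bigl\{(\lambda_1, \ldots, \lambda_{|H|-1}) \in \rr^{|H|-1} : \exists\, \lambda_{|H|} \text{ with } D(\lambda) \subset F \text{ and } L(\lambda) \geq 1\bigr\}.
\]
A direct modification of the convexity argument in Theorem~\ref{theorem-zonotopes-strong} shows $S(F)$ is convex: given two representatives, the Minkowski convex combination of the corresponding $H$-convex sets sits inside the convex set $F$, and the linearity of $L$ preserves the constraint. To apply Helly in $\rr^{|H|-1}$ with Helly number $|H|$, one verifies the Helly hypothesis: whenever $|H|$ members $F_1, \ldots, F_{|H|}$ of $\mathcal{F}$ have their intersection containing an $H$-convex set $K$ with $\diam(K) \geq 1$, the corresponding $S(F_i)$ share a common element. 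This reduces to producing inside $K$ an $H$-convex subset $K'$ with $L(K') = 1$, obtained by uniformly shrinking the support values after a suitable translation putting the origin inside $K$; the positive-spanning property of $H$ guarantees that such a shrinkage is possible.

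Helly's theorem then delivers a common parameter $\lambda^*$ and hence an $H$-convex set $K^* = D(\lambda^*) \subset \cap \mathcal{F}$ with $L(\lambda^*) \geq 1$. Translating $K^*$ so that the origin lies at a suitable interior point (making $\lambda_i^* \geq 0$), the diameter bound follows from the chain
\[
1 \leq L(\lambda^*) = \sum_i \lambda_i^* \leq \sqrt{|H|}\, \Bigl(\sum_i (\lambda_i^*)^2\Bigr)^{1/2} \leq \sqrt{|H|}\, \diam(K^*),
\]
where the first inequality is Cauchy--Schwarz applied to $(1, \ldots, 1)$ and $(\lambda_1^*, \ldots, \lambda_{|H|}^*)$, and the second uses the geometric estimate $\diam(K^*)^2 \geq \sum_i h_{v_i}(K^*)^2$ for the chosen origin placement. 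Dividing gives $\diam(K^*) \geq |H|^{-1/2}$, as claimed.

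The principal obstacle is the geometric inequality $\diam(K)^2 \geq \sum_i h_{v_i}(K)^2$ for a suitable placement of the origin within the $H$-convex set $K$; this is the analog of the identity $\diam^2 = \sum s_i^2$ used in the box case of Theorem~\ref{theorem-boxdiameter}, and the matching normalization step---scaling a witness of diameter at least one down to one with $L = 1$ while keeping it inside the given intersection---relies on the same positive-spanning property of $H$, which ensures that the $|H|$ support values $h_{v_i}(K)$ collectively reflect the metric geometry of $K$ finely enough for the argument to go through.
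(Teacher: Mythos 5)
Your approach replaces the diameter constraint by the linear functional $L(\lambda)=\sum_i h_{v_i}(K)$, applies the reduced-Helly argument to the convex sets $\{L\ge 1\}$, and then tries to convert back via a pointwise estimate $\diam(K)^2\ge\sum_i h_{v_i}(K)^2$. Both directions of this conversion fail. For the forward step (``$\diam\ge 1$ gives an $H$-convex subset with $L\ge 1$''): take $d=2$ and $H$ the three vertices of an equilateral triangle on $S^1$. Then $\sum_{v\in H}v=0$, so $L(K)=\sum_i h_{v_i}(K)$ is translation-invariant, and for the $H$-convex triangle of diameter $1$ one computes $L(K)=\sqrt3/2<1$; no translation or shrinkage fixes this, so the Helly hypothesis you need for the sets $S(F)$ is simply false in general. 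For the backward step, the inequality $\diam(K)^2\ge\sum_i h_{v_i}(K)^2$ is also false: for a thin $H$-convex set approximating a segment of length $\ell$ centered at the origin, $\sum_{v\in H}h_v(K)^2\approx (\ell^2/4)\sum_{v\in H}\langle v,u\rangle^2$, which for $|H|$ roughly uniformly spread directions grows like $|H|\ell^2/8$ and exceeds $\diam^2=\ell^2$ once $|H|>8$. You flagged this inequality as the ``principal obstacle''; it is in fact a dead end, not a gap to be filled. Your $S(F)$ is genuinely convex, but the wrong object to intersect.

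The paper's proof avoids any linear surrogate for the diameter. It keeps the diameter constraint in the definition of $S(K)$, concedes explicitly that $S(K)$ need not be convex, and applies Helly to the family $\{\conv(S(K)):K\in\mathcal F\}$. Given a common point $p$, Carath\'eodory writes $p=\sum_{i=1}^{|H|}\alpha_i p_i$ with $p_i\in S(K)$, each $p_i$ giving an $H$-convex $K_i\subset K$ with $\diam(K_i)\ge1$, and the Minkowski convex combination $K^*=\alpha_1K_1\oplus\cdots\oplus\alpha_{|H|}K_{|H|}\subset K$ then has
\[
\diam(K^*)^2\ \ge\ \sum_i\diam(\alpha_iK_i)^2\ \ge\ \sum_i\alpha_i^2\ \ge\ |H|^{-1},
\]
using the superadditivity $\diam(A\oplus B)^2\ge\diam(A)^2+\diam(B)^2$ (which one checks on the realizing segments). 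This is the essential geometric input, and there is no counterpart of it in your argument; if you want to salvage your framework you would have to replace the false pointwise inequality with something of this Minkowski-sum flavor, at which point you are essentially reproducing the paper's proof.
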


\begin{proof}
	Given $K \in \mathcal{F}$, let 
	\begin{align*}
		S(K) = \{ & (\lambda_1, \ldots, \lambda_{|H|-1}) \in \rr^{|H|-1}: \mbox{ there exists $\lambda$ such that $(\lambda_1, \ldots, \lambda_{|H|-1},\lambda)$} \\ &\mbox{ represents an $H$-convex set contained in $K$ of diameter at least one}\}
	\end{align*}
	
	Since the diameter is not a concave function, $S(K)$ may not be convex.  However, consider the family $\mathcal{G} = \{ \conv (S(K)) : K \in \mathcal{F}\}$.  If we apply Helly's theorem to $\mathcal{G}$, we obtain a point $p$ in its intersection.  Let us show that, given a $K \in \mathcal{F}$, there exists a value of $\lambda$ such that $(p, \lambda)$ represents an $H$-convex set of diameter at least $|H|^{-1/2}$ contained in $K$.  The minimum value of $\lambda$ we obtain among all possible $K \in \mathcal{F}$ will give us the representation of the $H$-convex set we are looking for.
	
	Take any set $K \in \mathcal{F}$.  Since $p \in \cap \mathcal{G}$, we know that $p \in \conv(S(K))$.  Therefore, by Carath\'eodory's theorem we have that $p$ is the convex combination of $|H|$ points $p_1, \ldots, p_{|H|}$ of $S(K)$, $p = \sum_{i=1}^{|H|} \alpha_i p_i$.  For each $p_i$ there exists a value $\gamma_i$ such that $(p_i, \gamma_i)$ represents an $H$-convex set $K_i \subset K$ of diameter greater than or equal to one.  Therefore, $ \sum_{i=1}^{|H|} \alpha_i (p_i, \gamma_i)$ represents an $H$-convex set $K^* \subset K$.
	
	Notice that if $A, B$ are two sets in $\rr^d$, then 
	\[
	\diam (A \oplus B)^2 \ge \diam (A)^2 + \diam (B)^2.
	\]
	The inequality above follows easily for parallelograms.  Since the diameter of a set is realized by a segment, this shows that it holds for all sets.  Therefore,
	\begin{align*}
	\diam (K^*)^2 = \diam \left(\alpha_1 K_1 \oplus \ldots \oplus \alpha_{|H|} K_{|H|}\right)^2 \ge \sum_{i=1}^{|H|}\diam( \alpha_i K_i)^2 \ge \sum_{i=1}^{|H|}\alpha_i^2 \ge |H|^{-1}.
	\end{align*}
\end{proof}

The proof above also works with zonotopes, and the guarantee for diameter we obtain in the end is $(k+d)^{-1/2}$.  In the case of axis-parallel boxes, Theorem \ref{theorem-diameter-xue} gives a slightly weaker bound than Theorem \ref{theorem-boxdiameter}.  However, it is only off by a constant factor and applies to a much more general family of sets.

\begin{example}[Central symmetry]
	Suppose every set in $\mathcal{F}$ is centrally symmetric around the origin.  Then, instead of pairs $(a, A)$ in $\rr^d \times \mathcal{P}_d$ we only need to consider pairs of the form $(0, A)$.  This reduces the dimension of the set of pairs $(0,A)$ to $d(d+1)/2$.   Therefore, if we only consider centrally symmetric sets in Theorem \ref{theorem-colorfulhellyellipsoids}, the Helly number is reduced from $d(d+3)/2$ to $d(d+1)/2$.
\end{example}

\begin{example}[Flipping the containment]
	The equation $M \subset a + AB_d$ is equivalent to $-A^{-1}a + A^{-1} M \subset B_d$.  We know that the set of all pairs $(b,B) \in \rr^d \times \mathcal{P}_d$ such that $b +B M \subset B_d$ is convex.  Moreover, if $\det(A) = 1$, then $\det(A^{-1})=1$. Therefore, in the space $\rr^d \times \mathcal{P}_d$ the diffeomorphism $(a,A) \mapsto (-A^{-1}a, A^{-1})$ shows that the set
	\[
	\{(a,A) : M \subset a + A B_d, \det(A) = 1\}
	\]
	is contractible for each bounded set $M \subset \rr^d$.
\end{example}

The construction in the example above can be used as before to prove a version of Theorem \ref{theorem-colorfulhellyellipsoids} and Corollary \ref{corollary-colorfulhellyellipsoids} for enclosing ellipsoids.  The proof is identical to the one in the previous subsection with essentially the same proof.  We state below the flipped version of Corollary \ref{corollary-colorfulhellyellipsoids}.

\begin{theorem}\label{theorem-loewner-ellipsoids}
	Let $n = d(d+3)/2$ and let $\mathcal{F}_1, \ldots, \mathcal{F}_n$ be finite families of bounded sets in $\rr^d$.  Suppose that for every choice $F_1 \in \mathcal{F}_1, \ldots , F_n \in \mathcal{F}_n$ we have that $\cup_{i=1}^n F_i$ is contained in an ellipsoid of volume one.  Then, there exists an index $i$ such that $\cup \mathcal{F}_i$ is contained in an ellipsoid of volume one.
\end{theorem}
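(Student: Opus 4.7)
The plan is to imitate the proof of Theorem \ref{theorem-strong-matroid-affine}, with the direction of containment reversed. The crucial input is the example just before the theorem, which observes that the diffeomorphism $(a,A) \mapsto (-A^{-1}a, A^{-1})$ identifies
\[
\{(a, A) \in \mathcal{K} : K_0 \subset a + A B_d\}
\]
with a set of the form $\{(b, B) \in \mathcal{K} : b + B K_0 \subset B_d\}$. Here $\mathcal{K} = \{(a,A) \in \rr^d \times \mathcal{P}_d : \det A = 1\}$ has dimension $n-1$, and the flipped set is contractible (or empty) by an argument parallel to that of Lemma \ref{lemma-contractible1}.

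Set $V = \biguplus_{i=1}^n \mathcal{F}_i$ and equip $V$ with the partition matroid whose blocks are the $\mathcal{F}_i$, so the rank function $\rho$ counts how many of the $\mathcal{F}_i$ a subset meets. For each $F \in V$, put
\[
T(F) = \{(a, A) \in \mathcal{K} : F \subset a + A B_d\}.
\]
The tautology $\bigcap_{F \in V'} T(F) = T\bigl(\bigcup_{F \in V'} F\bigr)$ holds for every $V' \subset V$, because an ellipsoid contains a union iff it contains each member. Combined with the contractibility observation above, this shows that every non-empty intersection of the $T(F)$'s is contractible.

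From here I would follow the template of Theorem \ref{theorem-strong-matroid-affine}: form the nerve complex $X$ of the family $\{T(F) : F \in V\}$, use the nerve lemma to identify each induced subcomplex $X[V']$ with the union $\bigcup_{F \in V'} T(F)$ inside $\mathcal{K}$, and then apply Lemma \ref{lemma-leray} together with Corollary \ref{corollary-leray} to conclude that every such union has vanishing reduced homology in degree $\ge n-1$. Hence $X$ is $(n-1)$-Leray. The hypothesis of the theorem says that every transversal $\{F_1, \ldots, F_n\}$ with $F_i \in \mathcal{F}_i$ is a face of $X$, so the partition matroid is contained in $X$. The Kalai--Meshulam theorem then produces a face $\tau \in X$ with $\rho(V \setminus \tau) \le n-1$, which forces at least one color class $\mathcal{F}_i$ to lie entirely inside $\tau$. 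Any point of $\bigcap_{F \in \tau} T(F) \subset \bigcap_{F \in \mathcal{F}_i} T(F)$ then yields an ellipsoid of volume one enclosing $\bigcup \mathcal{F}_i$.

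The main obstacle I expect is the contractibility step. Lemma \ref{lemma-contractible1} was stated for an open set $K$ of volume one, whereas our bounded sets $F$ need not be open nor contain the origin. I would handle this by first translating $F$ so that $0 \in F$ (a diffeomorphism of $\mathcal{K}$ under which $T(F)$ only shifts), and then running the Lemma \ref{lemma-contractible1} deformation retract in the flipped variables $(b,B)$; the hypothesis $0 \in F$ is precisely what lets the convex-combination-with-the-origin step go through inside the convex set $B_d$. A secondary technicality is the passage from closed to open sets needed to apply the nerve lemma and Lemma \ref{lemma-leray}, which can be handled by thickening $F$ slightly (or equivalently by replacing $a + A B_d$ by its interior) so that each $T(F)$ is open in $\mathcal{K}$, and then taking a standard compactness limit to produce the final ellipsoid.
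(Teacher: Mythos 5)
Your proposal is correct and follows essentially the same route the paper takes: invoke the ``flipping'' diffeomorphism $(a,A)\mapsto(-A^{-1}a,A^{-1})$ to transfer the enclosing-ellipsoid condition to the inscribed setup, observe that $\bigcap_{F\in V'}T(F)=T\bigl(\bigcup_{F\in V'}F\bigr)$, verify contractibility of such sets via (the flipped analogue of) Lemma \ref{lemma-contractible1}, and then run the nerve-lemma/Kalai--Meshulam machinery from the proof of Theorem \ref{theorem-strong-matroid-affine} with the partition matroid induced by the color classes. The paper itself only gestures at this (``the proof is identical\dots''), so your explicit treatment of the two technical points --- normalizing so the bounded set contains the origin before running the retraction, and the openness needed for the nerve lemma --- is a genuine clarification rather than a deviation. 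One small imprecision worth flagging: you speak of ``translating $F$'', but the set playing the role of $K$ in the flipped Lemma \ref{lemma-contractible1} is the union $M=\bigcup_{F\in V'}F$, and the translation must be chosen per intersection, not once globally (the $F$'s need not share a common point); since contractibility is a topological invariant and each $T(M-v)$ is just a translate of $T(M)$ in $\mathcal{K}$, this per-intersection normalization is harmless.
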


\subsection{Algorithmic consequences}

Most of our Helly-type results can be used to produce efficient algorithms to solve optimization problems.  Helly-type results and their connection to LP-type problems (short for ``linear programming'' type problems) have been used to design algorithms for a vast array of optimization problems.  Sharir and Welzl described a short list of conditions an LP-type problem needs in order to have a randomized algorithm that solves it in expected linear time in terms of the number of constraints \cite{Sharir:1992ih}.  Currently, there is a clear hierarchy of LP-type problems and the algorithms to solve them have been improved \cite{Gartner:2008bp, Amenta:2017ed}.

In addition to applying their framework to solve LP-problems, Sharir and Welzl showed how it could be used to compute the largest volume ellipsoid contained in the intersection of a family of $n$ half-spaces in $\rr^d$, or the smallest volume ellipsoid containing a family of $n$ points in $\rr^d$.  Our methods prove that the same algorithms can be used for a much wider family of functions and witness sets.  For example, consider the following two optimization results.

\begin{theorem}[Largest-volume axis-parallel box in an intersection]
	Suppose that we have an oracle that can determine the largest volume axis-parallel box in the intersection of any $2d$ convex sets in $\rr^d$.  Then, given a family $\mathcal{F}$ of convex sets in $\rr^d$, there is a randomized algorithm that finds the largest volume axis-parallel box in $\cap \mathcal{F}$ in expected $O(|\mathcal{F}|)$ calls to the oracle.
\end{theorem}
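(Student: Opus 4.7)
The plan is to recognize the problem as an LP-type problem in the sense of Sharir and Welzl and then invoke their randomized linear-time algorithm. I will take the constraint set to be $H := \mathcal{F}$ and define the objective function $w(G) := \sup\{\vol(B) : B \text{ is an axis-parallel box contained in } \bigcap G\}$ for every $G \subseteq \mathcal{F}$, breaking ties lexicographically on the parametrization of boxes (for instance, on the tuple consisting of the lower corner and the side lengths), so that $w$ takes values in a totally ordered set and the optimizer, when it exists, is unique.

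Next I would verify the two axioms of an LP-type problem. Monotonicity is immediate: enlarging $G$ shrinks $\bigcap G$, so $w$ can only decrease. Locality---whenever $F \subseteq G$ with $w(F) = w(G)$ one has $w(F \cup \{h\}) = w(G \cup \{h\})$ for any additional constraint $h$---holds because under the tie-breaking the unique optimal box for $F$ coincides with that for $G$, so the effect of adding $h$ is determined by whether this common optimal box is contained in $h$.

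The essential step is to bound the combinatorial dimension by $2d$. The Helly-type statement for axis-parallel boxes obtained from Example \ref{example-diagonal} asserts that if every $2d$ or fewer sets of a finite subfamily admit a common axis-parallel box of volume at least one, then so does the full subfamily; by homogeneity the same holds for any target volume $v > 0$. Applying this contrapositively, if $w(\mathcal{F}) < v$ then some $2d$-element subfamily already satisfies $w < v$, so every basis has size at most $2d$. In particular, for any $G$ with $|G| \le 2d + 1$ one has
\[
w(G) = \min_{\substack{S \subseteq G \\ |S| = 2d}} w(S),
\]
which can be evaluated with $O(d)$ calls to the given oracle (padding by $\rr^d$ whenever $|G| < 2d$). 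This suffices to implement in $O(d)$ oracle calls each of the two primitives required by Sharir and Welzl---the violation test and the basis update---on sets of size at most $2d+1$.

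Finally I would invoke the Sharir--Welzl theorem: an LP-type problem on $n$ constraints of fixed combinatorial dimension $\delta$ is solved in expected $O(n)$ primitive operations by the standard randomized incremental algorithm. With $\delta = 2d$ and $d$ treated as a constant, this yields $O(|\mathcal{F}|)$ oracle calls in expectation. The main obstacle I expect is verifying locality rigorously in the presence of non-unique maximizers (several axis-parallel boxes of the same largest volume inscribed in $\bigcap G$); I would address this via a symbolic lexicographic perturbation on the $2d$-dimensional parametrization of boxes, a standard trick that forces uniqueness of the optimizer without altering the combinatorial dimension or the optimal value.
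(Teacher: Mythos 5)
Your proposal matches the paper's approach: cast the optimization as an LP-type problem, bound the combinatorial dimension by $2d$ via the exact Helly theorem for axis-parallel boxes of fixed volume, and invoke Sharir--Welzl. Two minor slips worth fixing: the Helly statement you need is the one for axis-parallel boxes (a consequence of Theorem~\ref{theorem-zonotopes-strong} with $k=d$ and $v_i = e_i$, or of Theorem~\ref{theorem-hconvex} with $H = \{\pm e_i\}$), not Example~\ref{example-diagonal}, which concerns axis-parallel ellipsoids; and your stated locality axiom $w(F\cup\{h\}) = w(G\cup\{h\})$ is stronger than what is true or needed---the LP-type framework only requires that $h$ violates $G$ if and only if it violates $F$, which is exactly what your common-optimizer argument establishes, whereas the two values after adding a violating $h$ can genuinely differ.
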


\begin{theorem}[Best simultaneous approximation to a family of convex sets]\label{theorem-algorithm-approx}
	Suppose $\mathcal{C}$ is a family of convex sets in $\rr^d$ with a Minkowski parametrization in $\rr^l$.  Let $\mathcal{F}$ be a finite family of convex sets in $\rr^d$.  Suppose there is an oracle that can compute, for any $l+d+1$ convex sets, the smallest $\varepsilon>0$ such that there exists an element $M$ of $\mathcal{C}$ with a translation that is a simultaneous $\varepsilon$-approximation for the $l+d+1$ sets.  Then, there is a randomized algorithm that computes the smallest $\varepsilon >0$ such that there exists an element $M$ of $\mathcal{C}$ with a translation that is a simultaneous $\varepsilon$-approximation for $\mathcal{F}$ in expected $O(|\mathcal{F}|)$ calls to the oracle.
\end{theorem}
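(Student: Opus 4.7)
The plan is to recognize the optimization as an LP-type problem in the sense of Sharir and Welzl, and then to implement its two basic primitives using the given oracle. Let the constraint set be $H = \mathcal{F}$, and for each $\mathcal{G} \subseteq \mathcal{F}$ set
\[
w(\mathcal{G}) = \inf\{\varepsilon \geq 0 : \text{some translate of some } M \in \mathcal{C} \text{ is a simultaneous } \varepsilon\text{-approximation of } \mathcal{G}\}.
\]
Via the Minkowski parametrization $D : \rr^l \to \mathcal{C}$, the set
\[
T_\varepsilon(K) = \{(a,x) \in \rr^d \times \rr^l : a + D(x) \subseteq K \subseteq a + (1+\varepsilon)D(x)\}
\]
is convex in $(a,x)$ for fixed $\varepsilon$, so $T_\varepsilon(\mathcal{G}) := \bigcap_{K \in \mathcal{G}} T_\varepsilon(K)$ is convex and $w(\mathcal{G})$ is the smallest $\varepsilon$ with $T_\varepsilon(\mathcal{G}) \neq \emptyset$. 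Monotonicity $w(\mathcal{G}_1) \leq w(\mathcal{G}_2)$ for $\mathcal{G}_1 \subseteq \mathcal{G}_2$ is then immediate.

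Next I would bound the combinatorial dimension $\delta$ using Theorem \ref{theorem-helly-aproximation}. That theorem gives $T_\varepsilon(\mathcal{G}) \neq \emptyset$ as soon as $T_\varepsilon(\mathcal{G}') \neq \emptyset$ for every $\mathcal{G}' \subseteq \mathcal{G}$ with $|\mathcal{G}'| \leq l+d+1$, and taking infima yields
\[
w(\mathcal{G}) = \max_{\mathcal{G}' \subseteq \mathcal{G},\ |\mathcal{G}'| \leq l+d+1} w(\mathcal{G}').
\]
Hence every inclusion-minimal subfamily achieving $w(\mathcal{G})$ has at most $l+d+1$ elements, and $\delta \leq l+d+1$.

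The Sharir--Welzl framework requires two primitives: a \emph{violation test} (given a basis $B$ and $h \in H$, decide whether $w(B \cup \{h\}) > w(B)$) and a \emph{basis computation} (given such $B$ and $h$, return a basis of $B \cup \{h\}$). Each acts on at most $l+d+2$ sets. I would implement them using the oracle as follows: for any $\mathcal{G}$ with $|\mathcal{G}| \leq l+d+1$, one oracle call returns $w(\mathcal{G})$; for $|\mathcal{G}| = l+d+2$, the identity $w(\mathcal{G}) = \max_{G \in \mathcal{G}} w(\mathcal{G} \setminus \{G\})$ (a consequence of the Helly identity above) reduces $w(\mathcal{G})$ to $l+d+2$ oracle calls, and an inclusion-minimal achiever is extracted in $O(1)$ further calls. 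With $\delta = O(l+d)$ treated as a constant, the Sharir--Welzl analysis then delivers an expected total of $O(|\mathcal{F}|)$ oracle calls.

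The main obstacle will be verifying the locality axiom: if $\mathcal{G}_1 \subseteq \mathcal{G}_2$ with $w(\mathcal{G}_1) = w(\mathcal{G}_2)$, then for every $h \in H$, $w(\mathcal{G}_1 \cup \{h\}) > w(\mathcal{G}_1) \iff w(\mathcal{G}_2 \cup \{h\}) > w(\mathcal{G}_2)$. The $(\Leftarrow)$ direction is immediate from monotonicity, but $(\Rightarrow)$ is more subtle because the optimal pair $(a,x) \in T_{w(\mathcal{G})}(\mathcal{G})$ need not be unique. The standard remedy, which I would apply here, is to refine $w$ lexicographically by appending the minimal $(a,x)$ with respect to some fixed total order on $\rr^d \times \rr^l$; uniqueness then becomes automatic and locality follows by the classical argument used for the smallest enclosing ball problem. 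This refinement keeps the combinatorial dimension at $O(l+d)$, so the algorithmic conclusion is unchanged.
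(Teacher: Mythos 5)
Your proposal is correct and follows the same approach as the paper: recognize the problem as LP-type in the Sharir--Welzl framework, use Theorem \ref{theorem-helly-aproximation} to bound the combinatorial dimension by $l+d+1$, and invoke their randomized algorithm. The paper treats the verification of the LP-type axioms (monotonicity, locality, and the implementation of the primitives via the oracle) as implicit, so your explicit treatment of these points, including the lexicographic refinement needed to guarantee locality when optimal pairs $(a,x)$ are not unique, supplies details the paper elides rather than departing from its method.
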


The randomized dual-simplex algorithm of Sharir and Welzl is sufficient to prove the two theorems above \cite{Sharir:1992ih}. The algorithm for Theorem \ref{theorem-largest-box-algorithm} would look as follows:\\[0.5pt]

$\bullet$ \texttt{Order the sets in $\mathcal{F}$ randomly as $K_1, \ldots, K_{|\mathcal{F}|}$.}

$\bullet$ \texttt{Find the largest volume axis-parallel box $B$ in the intersection of $K_1, \ldots, K_{2d}$.}

$\bullet$ \texttt{Check one by one each $K_i$ to see if $B \subset K_i$.  If this hold for all, we are done and $B$ is the box we wanted.}

$\bullet$  \texttt{Otherwise, let $m$ be the smallest index for which $B \not\subset K_m$.  Find the largest volume axis-parallel box $B'$ in the intersection of $K_1, \ldots, K_{2d}, K_m$ by checking every $2d$-tuple.  Let $\mathcal{B}$ be the $2d$-tuple whose maximal volume axis-parallel box is $B'$.}

$\bullet$ \texttt{Run the algorithm recursively on $K_1, \ldots, K_m$ to find the largest volume axis-parallel box $B''$ in their intersection, but fixing the first $2d$ elements to be $\mathcal{B}$ in the initial random ordering.  This will also output the $2d$-tuple $\mathcal{B}'$ whose maximal volume axis-parallel box is $B''$.}

$\bullet$ \texttt{Continue with $K_{m+1}, \ldots, K_{|\mathcal{F}|}$.}

\vspace{5pt}

The validity of our Helly theorems implies the expected running time of the algorithms.  Our Helly theorems bound the combinatorial dimension of the associated optimization problems ($2d$ and $l+d+1$, respectively).  Of course, most of our Helly-type theorems yield an analogous linear-time algorithm for their associated optimization problem.  We stress that we have only described the dependence of the algorithms in terms of $|\mathcal{F}|$.   For more recent algorithms applicable to LP-type problems and their dependence on the combinatorial dimension of the problem, we recommend the discussion of the subject by G\"artner et al. and by Amenta et al. \cite{Amenta:2017ed, Gartner:2008bp}.

\section{Quantitative Tverberg results}\label{section-tverberg}

\subsection{Results using Tverberg's theorem}\label{section-affinetverberg}

In order to obtain a general Tverberg theorem, similar in spirit to Theorem \ref{theorem-general-helly}, we require slightly different conditions.  Suppose that $\mathcal{C}$ is a family of sets in $\rr^d$ parametrized by points in $\rr^l$, by a some function $D: \rr^l \to \mathcal{C}$.  We need that for every $F \subset \rr^l$, 
\[
D(\conv{F}) \subset \conv D(F).
\]

If $D$ is a Minkowski parametrization, the condition above comes for free.  To observe this, notice that for any two sets $A, B$ in $\rr^d$ we have that
\[
\bigcup_{\lambda \in [0,1]} (\lambda A \oplus (1-\lambda) B) \subset \conv(A \cup B).
\]

\begin{theorem}\label{theorem-general-Tverberg}
	Let $\mathcal{C}$ be a family of convex sets in $\rr^d$, and $D: \rr^l \to \mathcal{C}$ be a surjective function.  Suppose that for every set $\mathcal{F} \subset \rr^l$ we have $D(\conv{F}) \subset (\conv D(F))$.  Let $f: \mathcal{C} \to \rr$ be such that $f \circ D: \rr^l \to \rr$ is $\min$-concave.  Then, the following statement is true.
	 
	Given a family $\mathcal{F} \subset \mathcal{C}$ of cardinality $(r-1)(l+1)+1$ such that $f(K) \ge 1$ for all $K \in \mathcal{F}$, there exists a partition of $\mathcal{F}$ into $r$ sets $\mathcal{A}_1, \ldots, \mathcal{A}_r$ such that
	\[
	f\left(\bigcap_{j=1}^r \conv(\cup \mathcal{A}_j)  \right) \ge 1.
	\]
\end{theorem}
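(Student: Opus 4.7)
The plan is to lift the problem from $\rr^d$ to $\rr^l$ through the parametrization $D$ and then invoke the classical Tverberg theorem in $\rr^l$. Since $D$ is surjective, for each $K \in \mathcal{F}$ I would choose a preimage $x_K \in D^{-1}(K)$, producing a multiset $X = \{x_K : K \in \mathcal{F}\} \subset \rr^l$ of exactly $(r-1)(l+1)+1$ points; this is precisely the Tverberg number in dimension $l$. Applying Tverberg's theorem to $X$ yields a partition $X = X_1 \sqcup \cdots \sqcup X_r$ whose convex hulls share a common point $p$, and pulling this partition back to $\mathcal{F}$ produces the desired blocks $\mathcal{A}_1, \ldots, \mathcal{A}_r$ with $X_j = \{x_K : K \in \mathcal{A}_j\}$.

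The natural candidate witness for the quantitative conclusion is the set $D(p) \in \mathcal{C}$. Two things have to be checked. First, $D(p) \subset \conv(\cup \mathcal{A}_j)$ for each $j$: fixing $j$, the containment $p \in \conv X_j$ together with the standing hypothesis $D(\conv X_j) \subset \conv D(X_j)$ gives $D(p) \subset \conv D(X_j)$, and since $D(X_j) = \{K : K \in \mathcal{A}_j\}$, this convex hull is exactly $\conv(\cup \mathcal{A}_j)$. Second, $f(D(p)) \geq 1$: writing $p = \sum_{K \in \mathcal{A}_j} \alpha_K x_K$ as a convex combination and iterating the min-concavity of $f \circ D$, we obtain $f(D(p)) \geq \min_{K \in \mathcal{A}_j} f(D(x_K)) = \min_{K \in \mathcal{A}_j} f(K) \geq 1$.

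The main subtlety I foresee is interpreting the conclusion $f\bigl(\bigcap_{j=1}^r \conv(\cup \mathcal{A}_j)\bigr) \geq 1$, since $f$ is formally defined on $\mathcal{C}$ while the intersection need not lie in $\mathcal{C}$. In the intended applications $f$ is monotone under inclusion---typically a log-concave measure as in Example \ref{example-concave-measures}---so the containment $D(p) \subset \bigcap_{j=1}^r \conv(\cup \mathcal{A}_j)$ combined with $f(D(p)) \geq 1$ immediately gives the stated bound. More generally, the theorem should be read in the spirit of Theorem \ref{theorem-general-helly}, as producing a distinguished set $D(p) \in \mathcal{C}$ that lies inside the intersection of the convex hulls of the blocks and satisfies $f(D(p)) \geq 1$.
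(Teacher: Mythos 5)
Your proof is correct and takes essentially the same route as the paper: lift $\mathcal{F}$ to $\rr^l$ by choosing preimages under $D$, apply Tverberg's theorem in $\rr^l$, and use the pushforward hypothesis $D(\conv F) \subset \conv D(F)$ together with min-concavity of $f \circ D$ to transfer the conclusion back down. The paper compresses all of this into a single sentence; you have filled in the verifications correctly, including the sensible reading that the conclusion is really asserting the existence of a witness $D(p) \in \mathcal{C}$ contained in $\bigcap_{j} \conv(\cup \mathcal{A}_j)$ with $f(D(p)) \geq 1$.
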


\begin{proof}
	For each $K \in \mathcal{F}$, consider a point $k \in \rr^l$ such that $D(k) = K$.  Then, applying Tverberg's theorem to the family of points in $\rr^l$ generated this way gives us the desired result. 
\end{proof}

We can apply almost any variation of Tverberg's theorem to Theorem \ref{theorem-general-Tverberg}, including the colorful Tverberg theorem or the versions with tolerance \cite{Barany:2018fy, DeLoera:2019jb}.  Let us describe some examples of families of convex sets for which Theorem \ref{theorem-general-Tverberg} applies. If we apply Theorem \ref{theorem-general-Tverberg} to the parametrization of ellipsoids described in Section \ref{example-matrices}, and $f( \cdot )$ is the volume, we obtain the following theorem, which directly implies Theorem \ref{theorem-tverbergexample}.

\begin{theorem}[Tverberg for ellipsoids of volume one]
	Let $\mathcal{F}$ be a family of $(r-1)\left(\frac{d(d+3)}{2}+1\right) + 1$ ellipsoids of volume one in $\rr^d$.  Then, there exists a partition of $\mathcal{F}$ into $r$ parts $\mathcal{A}_1, \ldots, \mathcal{A}_r$ such that $\bigcap_{j=1}^r \conv \left( \cup \mathcal{A}_j\right)$ contains an ellipsoid of volume one.
\end{theorem}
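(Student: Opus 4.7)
The plan is to apply Theorem \ref{theorem-general-Tverberg} to the ellipsoid parametrization of Section \ref{example-matrices}, with the volume as the target function. Let $l = \frac{d(d+3)}{2}$ and identify the family $\mathcal{C}$ of ellipsoids in $\rr^d$ with the convex open cone $\rr^d \times \mathcal{P}_d \subset \rr^l$ via the map
\[
D(a, A) = a + A B_d,
\]
where $B_d$ is the unit Euclidean ball in $\rr^d$. The polar decomposition argument already used in Section \ref{example-matrices} shows that every ellipsoid is of this form for some $(a, A) \in \rr^d \times \mathcal{P}_d$, so $D$ is surjective onto $\mathcal{C}$; although the natural domain is an open cone rather than all of $\rr^l$, its convexity will be all that is needed for the Tverberg argument below.

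Next I would verify the two hypotheses required by Theorem \ref{theorem-general-Tverberg}. For the inclusion $D(\conv F) \subset \conv D(F)$, the one-line computation from Section \ref{example-matrices} gives, for any $(a,A), (b,B) \in \rr^d \times \mathcal{P}_d$ and $\lambda \in [0,1]$,
\[
D(\lambda(a,A) + (1-\lambda)(b,B)) = [\lambda a + (1-\lambda)b] + [\lambda A + (1-\lambda)B] B_d \subset \lambda(a+AB_d) \oplus (1-\lambda)(b+BB_d),
\]
and the Minkowski convex combination on the right sits inside $\conv(D(a,A) \cup D(b,B))$; iterating this yields the general statement. For the min-concavity of $\vol \circ D$, observe that $\vol(D(a,A)) = \det(A)\,\vol(B_d)$ and that $\det$ is log-concave on $\mathcal{P}_d$ by the Minkowski determinant inequality, so $\vol \circ D$ is in fact log-concave on the parameter space, hence min-concave.

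With these ingredients I would represent each ellipsoid in $\mathcal{F}$ by its parameter in $\rr^d \times \mathcal{P}_d$, obtaining $(r-1)(l+1)+1$ points in this convex subset of $\rr^l$, and invoke Tverberg's theorem inside $\rr^l$. The Tverberg point $p$ is a convex combination of these parameters, so it lies in the convex cone $\rr^d \times \mathcal{P}_d$ and represents a genuine ellipsoid $E = D(p)$. By the inclusion property applied to each Tverberg part, $E \subset \conv(\cup \mathcal{A}_j)$ for every $j$, and by min-concavity of the volume, $\vol(E) \ge \min_{K \in \mathcal{F}} \vol(K) = 1$, which is the desired conclusion.

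The only real subtlety is the one just flagged: the parametrization space $\rr^d \times \mathcal{P}_d$ is a convex subset of $\rr^l$ rather than all of $\rr^l$, so one must be sure that the Tverberg point does not escape $\mathcal{P}_d$ and thereby fail to represent an honest ellipsoid. Convexity of $\mathcal{P}_d$ under ordinary linear combinations resolves this immediately. Everything else is a direct transcription of the Helly-type argument from Section \ref{example-matrices} into the Tverberg framework of Theorem \ref{theorem-general-Tverberg}.
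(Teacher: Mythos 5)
Your proof is correct and is essentially the same argument the paper gives: the paper obtains this theorem by applying Theorem \ref{theorem-general-Tverberg} to the ellipsoid parametrization of Section \ref{example-matrices} with $f = \vol$, exactly as you do. Your explicit check that the Tverberg point stays in the convex cone $\rr^d \times \mathcal{P}_d$ is a useful clarification that the paper leaves implicit.
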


  The dimension of the space in Theorem \ref{theorem-general-Tverberg} can be reduced for zonotopes and $H$-convex sets exactly as in the proofs of Theorem \ref{theorem-zonotopes-strong} and Theorem \ref{theorem-hconvex}, by hiding one coordinate.  For zonotopes, we obtain the following result.

\begin{theorem}[Tverberg for zonotopes]
	Let $k \ge d$ be positive integers and ${v}_1, \ldots, v_k$ be directions in $\rr^d$.  Given a family $\mathcal{F}$ of $(r-1)(k+d)+1$ zonotopes with directions $v_1, \ldots, v_k$, each of volume at least one, there exists a partition of $\mathcal{F}$ into $r$ parts $\mathcal{A}_1, \ldots, \mathcal{A}_r$ such that $\cap_{j=1}^r (\conv(\cup \mathcal{A}_j))$ contains a zonotopes with directions $v_1, \ldots, v_k$ of volume at least one.
\end{theorem}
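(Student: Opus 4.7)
The plan is to merge the reduction to Tverberg's theorem used in Theorem \ref{theorem-general-Tverberg} with the coordinate-hiding trick from the proof of Theorem \ref{theorem-zonotopes-strong}. Parametrize each $Z_i \in \mathcal{F}$ by the point $z_i = (p_i, \alpha_{i,1}, \ldots, \alpha_{i,k}) \in \rr^{k+d}$ in the usual Minkowski parametrization of zonotopes with directions $v_1, \ldots, v_k$, and let $\bar{z}_i \in \rr^{k+d-1}$ denote its projection obtained by deleting the last coordinate $\alpha_{i,k}$. Apply Tverberg's theorem in $\rr^{k+d-1}$ to the $|\mathcal{F}| = (r-1)((k+d-1)+1)+1$ points $\bar{z}_i$ to obtain a partition $\mathcal{A}_1, \ldots, \mathcal{A}_r$ of $\mathcal{F}$ and a common point $\bar{q} = (p^{\ast}, \alpha_1^{\ast}, \ldots, \alpha_{k-1}^{\ast})$ contained in every convex hull $\conv\{\bar{z}_i : Z_i \in \mathcal{A}_j\}$.

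For each part $j$, fix nonnegative weights $\lambda_{i,j}$ with $\sum_{i} \lambda_{i,j} = 1$ realizing $\bar{q} = \sum_{i} \lambda_{i,j} \bar{z}_i$, and set $\beta_j = \sum_{i} \lambda_{i,j}\, \alpha_{i,k} \ge 0$, both sums taken over $i$ with $Z_i \in \mathcal{A}_j$. The lifted point $(\bar{q}, \beta_j) = \sum_i \lambda_{i,j} z_i$ lies in $\conv\{z_i : Z_i \in \mathcal{A}_j\}$; because the parametrization $D$ is a Minkowski parametrization, the corresponding zonotope satisfies $D(\bar{q}, \beta_j) = \bigoplus_i \lambda_{i,j} Z_i \subset \conv(\cup \mathcal{A}_j)$, and by the Brunn-Minkowski inequality its volume is at least one.

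To extract a single zonotope that lies in every $\conv(\cup \mathcal{A}_j)$, set $\beta = \min_j \beta_j$ and $Z^{\ast} = D(\bar{q}, \beta)$. For the index $j^{\ast}$ attaining the minimum, $Z^{\ast}$ coincides with the zonotope built in the previous paragraph and so has volume at least one. For any other $j$, the identity
\[
D(\bar{q}, \beta_j) \;=\; Z^{\ast} \;\oplus\; \bigl[0,\,(\beta_j-\beta)\, v_k\bigr]
\]
together with the fact that the segment on the right contains the origin gives $Z^{\ast} \subset D(\bar{q}, \beta_j) \subset \conv(\cup \mathcal{A}_j)$. The main obstacle is precisely this final reconciliation: the hidden coordinates $\beta_j$ need not agree across parts, so a naive lift of $\bar{q}$ to $\rr^{k+d}$ can fail to sit in every $\conv(\cup \mathcal{A}_j)$ simultaneously. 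Taking $\beta$ to be the minimum preserves containment on one side by monotonicity of Minkowski addition by a segment through the origin, and preserves the unit-volume guarantee on the other side because the minimum is attained by some particular part $j^{\ast}$.
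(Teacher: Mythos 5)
Your proposal is correct and follows essentially the route the paper indicates: reduce to Tverberg's theorem in $\rr^{k+d-1}$ by hiding the last coordinate of the Minkowski parametrization, exactly as Theorem \ref{theorem-zonotopes-strong} hides a coordinate for the Helly side. The reconciliation step you spell out — taking $\beta = \min_j \beta_j$ and using that Minkowski addition of a segment through the origin only enlarges the zonotope — is precisely the detail the paper leaves implicit when it says the reduction is done "by hiding one coordinate."
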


For the case $k=d$, in which our zonotopes are axis-parallel boxes, the results above requires $2(r-1)d+1$ boxes, which is linear in both $d$ and $r$.  The previous techniques used to obtain quantitative Tverberg theorems \cite{Soberon:2016co, Rolnick:2017cm, DeLoera:2017bl} give much larger dependence on the dimension.

In some cases, such as for ellipsoids centered at the origin, we can reduce the number of sets that Theorem \ref{theorem-general-Tverberg} requires.

\begin{theorem}[Tverberg for ellipsoids centered at the origin]\label{theorem-tverberg-centrallysymmetric}
	Let $r,d$ be positive integers.  Given a set $\mathcal{F}$ of $(r-1)\frac{d(d+1)}{2}+1$ ellipsoids of volume one in $\rr^d$ centered at the origin, there exists a partition of $\mathcal{F}$ into $r$ sets $A_1, \ldots, A_r$ such that
	\[
	\bigcap_{j=1}^r \conv \left( \cup A_j \right)
	\]
	contains an ellipsoid of volume one centered at the origin.
\end{theorem}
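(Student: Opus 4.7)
My plan is to apply the classical Tverberg theorem in a subspace of dimension $\frac{d(d+1)}{2}-1$ after parametrizing centered ellipsoids by symmetric positive-definite matrices and projecting along the direction of the identity $I$. Fix $B_d$ to be the Euclidean ball of volume $1$ in $\rr^d$. Each centered ellipsoid $E_i$ of volume $1$ is uniquely of the form $E_i = M_i B_d$ for some $M_i \in \mathcal{P}_d$ with $\det M_i = 1$. These shape matrices live in the space of symmetric matrices, which has dimension $\frac{d(d+1)}{2}$. Consider the linear projection $\pi(M) = M - \frac{\operatorname{tr} M}{d} I$, whose image is the hyperplane of traceless symmetric matrices (of dimension $\frac{d(d+1)}{2}-1$) and whose kernel is $\rr \cdot I$.

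The count $n = (r-1)\frac{d(d+1)}{2}+1$ is exactly the Tverberg number for $\rr^{\frac{d(d+1)}{2}-1}$. Applying Tverberg's theorem in the image of $\pi$ to $\pi(M_1), \ldots, \pi(M_n)$ yields a partition of $\{1,\ldots,n\}$ into $\mathcal{A}_1, \ldots, \mathcal{A}_r$ and coefficients $\alpha_i^{(j)} \geq 0$ with $\sum_{i \in \mathcal{A}_j}\alpha_i^{(j)} = 1$ such that $x^* := \sum_{i \in \mathcal{A}_j}\alpha_i^{(j)}\pi(M_i)$ is independent of $j$. Define $\tilde M_j = \sum_{i \in \mathcal{A}_j} \alpha_i^{(j)} M_i$, which is positive-definite since it is a convex combination of positive-definite matrices.

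Two facts about the $\tilde M_j$ drive the argument. Minkowski's determinant inequality (equivalently, concavity of $\det^{1/d}$ on $\mathcal{P}_d$) gives
\[
\det(\tilde M_j)^{1/d} \;\geq\; \sum_{i \in \mathcal{A}_j}\alpha_i^{(j)}\det(M_i)^{1/d} \;=\; 1,
\]
so $\det \tilde M_j \geq 1$. Linearity of $\pi$ gives $\pi(\tilde M_j) = x^*$ for every $j$, and because $\ker \pi = \rr \cdot I$, we can write $\tilde M_j = x^* + c_j I$ for some scalar $c_j$. Moreover, for any $y \in B_d$,
\[
\tilde M_j y \;=\; \sum_{i \in \mathcal{A}_j}\alpha_i^{(j)}M_i y \;\in\; \conv\bigl(\{M_i y : i \in \mathcal{A}_j\}\bigr) \;\subset\; \conv\Bigl(\bigcup_{i \in \mathcal{A}_j} E_i\Bigr),
\]
so $\tilde M_j B_d \subset \conv\bigl(\bigcup_{i \in \mathcal{A}_j} E_i\bigr)$ for each $j$.

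To conclude, pick $j^*$ minimizing $c_j$ and set $M^* = \tilde M_{j^*}$. Each difference $\tilde M_j - M^* = (c_j - c_{j^*})I$ is a nonnegative multiple of $I$, which commutes with $M^*$; simultaneous diagonalization of $M^*$ and $\tilde M_j$ then gives $M^* B_d \subset \tilde M_j B_d$ for every $j$. Hence $M^* B_d$ lies inside $\bigcap_{j}\conv\bigl(\bigcup_{i \in \mathcal{A}_j} E_i\bigr)$, and since $\det M^* \geq 1$, the uniformly scaled ellipsoid $(\det M^*)^{-1/d} M^* B_d$ is centered at the origin, has volume exactly $1$, and remains contained in that intersection. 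The main obstacle I anticipate is ensuring that the reconstructions from the various parts $\mathcal{A}_j$ are comparable: a generic codimension-one projection would produce $\tilde M_j$'s which are incomparable in the Loewner order. Projecting specifically along $I$ forces all of them onto the single line $x^* + \rr\cdot I$, where the Loewner order is total, and this is what allows the final step to go through.
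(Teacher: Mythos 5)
Your proof is correct, but it takes a genuinely different route from the paper's. The paper parametrizes centered ellipsoids by the space $\mathcal{S}_d$ of symmetric positive definite matrices whose \emph{sum of entries} equals one, applies Tverberg there, and reduces the task to showing that $S^*(M) = \{A \in \mathcal{S}_d : (\det A)^{-1/d} A B_d \subset M\}$ is convex; the convexity proof works by rewriting $\lambda A + (1-\lambda)B$ as $sA' + tB'$ where $A', B'$ are the determinant-one rescalings and then invoking log-concavity of the determinant to compare scaling factors. You instead keep the unit-determinant normalization, project the shape matrices along $\rr\cdot I$ (i.e.\ onto traceless symmetric matrices), apply Tverberg in that $\bigl(\frac{d(d+1)}{2}-1\bigr)$-dimensional space, and then reconstruct the $\tilde M_j$'s on the fiber $x^* + \rr\cdot I$. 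Your crucial extra step --- picking the smallest $c_j$ and using that the fiber is totally ordered in the Loewner order, so that $M^* B_d \subset \tilde M_j B_d$ for all $j$ --- replaces the paper's convexity-of-$S^*(M)$ lemma; both arguments ultimately rest on the Minkowski determinant inequality (log-concavity of $\det$ on $\mathcal{P}_d$). Your version is arguably more hands-on and makes the role of the $I$-direction explicit, whereas the paper's version fits more uniformly into its general parametrize-and-show-convexity framework and makes the reduction to Tverberg in $\mathcal{S}_d$ immediate. One small remark: your claim that $\tilde M_j \succeq M^*$ with commuting matrices implies $M^*B_d \subset \tilde M_j B_d$ is correct, but it is worth noting that commutativity (or simultaneous diagonalization) really is needed here --- Loewner comparability alone does not give containment of the image ellipsoids, which is exactly why you need the $I$-fiber structure and not an arbitrary codimension-one projection, as you point out.
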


\begin{proof}
	We use a slightly different parametrization of ellipsoids than the one used in the proof of Theorem \ref{theorem-strong-matroid-affine}.  Let $\mathcal{S}_d$ be the space of symmetric positive definite matrices whose sum of entries is equal to one.  Notice that $S_d$ is an affine subspace of the cone $\mathcal{P}_d$ of symmetric positive definite matrices.  The set $S_d$ has dimension $\frac{d(d+1)}{2}-1$, so we can consider it as a real vector space.
	
	Given a convex set $M \subset \rr^d$, let us consider
	\[
	S^{*} (M) = \left\{A \in \mathcal{S}_d : \left( \frac{1}{\det(A)^{1/d}}A\right) B_d \subset M\right\}.
	\]
	Let us show that $S^{*}(M)$ is convex.  Notice that since the parametrization was modified b, so does the meaning of convex combinations.  Let $\lambda \in [0,1]$ and $A, B \in S^*(M)$.  Consider
	\begin{align*}
		A'  & = \frac{1}{\det(A)^{1/d}}A, \\
		B'  & = \frac{1}{\det(B)^{1/d}}B, \\ 
		s & = \det(A)^{1/d} \lambda, \qquad \mbox{and} \\
		t & = \det(B)^{1/d} (1-\lambda).
	\end{align*}
	
	We want to show that $ \lambda A + (1-\lambda)B \in S^*(M)$.  We can rewrite the matrix by noticing that $\lambda A + (1-\lambda)B = s A' + t B'$.  Therefore, we want to show that 
	\[
	\frac{1}{\det(sA'+tB')^{1/d}}(sA' + tB') B_d \subset M.
	\]
	
	We know that $A'B_d \subset M$ and $B'B_d \subset M$, so $\frac{1}{s+t}(sA'+tB')B_d \subset M$.  Now, it remains to use the log-concavity of the determinant to show the following inequality.
	\begin{align*}
	\frac{1}{\det(sA'+tB')^{1/d}} & = \frac{1}{(s+t)\det((\frac{s}{s+t})A'+(\frac{t}{s+t})B')^{1/d}} \\ & \le \frac{1}{(s+t)\det(A')^{s/d(s+t)}\det(B')^{t/d(s+t)}} = \frac{1}{s+t}.	
	\end{align*}
	Therefore, $\lambda A + (1-\lambda)B \in S^*(M)$, and the problem reduces to Tverberg's theorem on $\mathcal{S}_d$.
\end{proof}

If the ellipsoids are no longer centered at the origin, the argument above fails to show that $S^*(M)$ is convex.  In the next subsection we use the topological version of Tverberg's theorem to get around this problem.  The theorem above does, however, imply the following alternative Tverberg theorem for the volume.

\begin{theorem}[Tverberg for the volume of centrally symmetric sets]
	Let $r, d$ be positive integers.  If we are given a family $\mathcal{F}$ of $(r-1)\frac{d(d+1)}{2}+1$ centrally symmetric convex sets of volume one, each centered at the origin, there exists a partition of $\mathcal{F}$ into $r$ parts $\mathcal{A}_1, \ldots, \mathcal{A}_r$ such that the volume of $\bigcap_{j=1}^r \conv( \cup \mathcal{A}_j)$ is at least $d^{-d/2}$.
\end{theorem}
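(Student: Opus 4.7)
The natural strategy is to reduce to Theorem~\ref{theorem-tverberg-centrallysymmetric} by replacing each centrally symmetric set of volume one with an inscribed ellipsoid of comparable volume, and then pay the cost of this replacement at the end. The key classical input is John's theorem for centrally symmetric bodies: if $K \subset \rr^d$ is centrally symmetric around the origin, then its John ellipsoid $\mathcal{E}_K$ is also centered at the origin and satisfies $\mathcal{E}_K \subset K \subset \sqrt{d}\,\mathcal{E}_K$. In particular, when $\vol(K) = 1$ one has $\vol(\mathcal{E}_K) \ge d^{-d/2}$.

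Concretely, for each $K \in \mathcal{F}$ let $\mathcal{E}_K$ be its John ellipsoid and set $\alpha_K = \vol(\mathcal{E}_K)^{-1/d}$, so that $\mathcal{E}_K' := \alpha_K \mathcal{E}_K$ is an ellipsoid of volume one centered at the origin. Since $\alpha_K \le \sqrt{d}$ and $K$ is star-shaped about the origin, we get the inclusion $\mathcal{E}_K' \subset \alpha_K K \subset \sqrt{d}\, K$. The family $\{\mathcal{E}_K' : K \in \mathcal{F}\}$ has exactly $(r-1)\frac{d(d+1)}{2}+1$ elements, so Theorem~\ref{theorem-tverberg-centrallysymmetric} produces a partition $\mathcal{A}_1, \ldots, \mathcal{A}_r$ of $\mathcal{F}$ such that $\bigcap_{j=1}^r \conv\big(\bigcup_{K \in \mathcal{A}_j} \mathcal{E}_K'\big)$ contains an ellipsoid of volume one, and hence has volume at least one.

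Pulling the information back to the original sets finishes the proof. From $\mathcal{E}_K' \subset \sqrt{d}\, K$ applied termwise inside each part we obtain
\[
\bigcap_{j=1}^r \conv\Big(\bigcup_{K \in \mathcal{A}_j} \mathcal{E}_K'\Big) \;\subset\; \bigcap_{j=1}^r \sqrt{d}\,\conv(\cup \mathcal{A}_j) \;=\; \sqrt{d}\,\bigcap_{j=1}^r \conv(\cup \mathcal{A}_j),
\]
so taking volumes yields $d^{d/2}\,\vol\big(\bigcap_{j=1}^r \conv(\cup \mathcal{A}_j)\big) \ge 1$, which is exactly the desired bound $d^{-d/2}$.

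There is no serious obstacle beyond bookkeeping: the main point to verify carefully is that the per-set rescalings $\alpha_K$, which vary with $K$, nonetheless fit under the uniform factor $\sqrt{d}$ in the containment $\mathcal{E}_K' \subset \sqrt{d}\,K$. This relies on $K$ being star-shaped about the origin, which is exactly the hypothesis that each set is centrally symmetric around $0$. Uniqueness of the John ellipsoid together with the symmetry $K = -K$ also guarantees that $\mathcal{E}_K$ is centered at the origin, so that the rescaled ellipsoid $\mathcal{E}_K'$ is admissible as input for Theorem~\ref{theorem-tverberg-centrallysymmetric}.
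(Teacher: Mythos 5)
Your proof is correct and follows the same approach the paper sketches: replace each set by a rescaled John ellipsoid of volume one (using John's theorem for symmetric bodies to keep the rescaling factor at most $\sqrt{d}$), apply Theorem~\ref{theorem-tverberg-centrallysymmetric}, and pull the guarantee back by paying a factor of $d^{d/2}$ in volume. Your write-up just makes explicit the bookkeeping that the paper leaves implicit.
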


\begin{proof}
In order to prove the Tverberg theorem above, the only additional fact we need is that for a centrally symmetric convex set $K$ with John Ellipsoid $\mathcal{E}$, we have
\[
\mathcal{E} \subset K \subset \sqrt{d} \ \mathcal{E}.
\]

Once combined with Theorem \ref{theorem-tverberg-centrallysymmetric} we obtain the desired conclusion.
\end{proof}

An application of the same parametrization as in Theorem \ref{theorem-increasingdiameter} gives the following Tverberg-type result.  We say a segment is increasing if its endpoints $x,y$ satisfy $x \ge y$ or $y \ge x$, with the partial order considered in Section \ref{secion-more-ellipsoids}.

\begin{theorem}[Tverberg for increasing segments]
	Given a family of $2(r-1)d+1$ increasing segments in $\rr^d$, each with $\ell_1$-norm equal to one, there exists a partition of the family into $r$ parts  $\mathcal{A}_1, \ldots, \mathcal{A}_r$ such that $\bigcap_{j=1}^r \conv (\cup \mathcal{A}_j)$ contains an increasing segment with $\ell_1$-norm equal to one.
\end{theorem}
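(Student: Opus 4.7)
The plan is to instantiate the framework of Theorem \ref{theorem-general-Tverberg} with a $(2d-1)$-dimensional parametrization tailored to increasing segments of $\ell_1$-norm one, and then invoke the classical Tverberg theorem in that parameter space. Every increasing segment in $\rr^d$ can be written uniquely as $\{a + t\alpha : t \in [0,1]\}$ with $a \in \rr^d$ and $\alpha \in \rr^d$ whose coordinates are all non-negative; its $\ell_1$-norm is $\alpha_1 + \cdots + \alpha_d$. Restricting to norm one, the parameter set becomes $\mathcal{S} = \{(a,\alpha) \in \rr^d \times \rr^d : \alpha_i \ge 0,\ \alpha_1 + \cdots + \alpha_d = 1\}$, which sits inside an affine subspace of $\rr^{2d}$ of dimension $l = 2d-1$.

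The first thing I would verify is the key containment $D(\conv F) \subset \conv D(F)$ required by Theorem \ref{theorem-general-Tverberg}, where $D$ denotes the parametrization $(a,\alpha) \mapsto \{a + t\alpha : t \in [0,1]\}$. For parameters $(a_i,\alpha_i) \in \mathcal{S}$ and coefficients $\lambda_i \ge 0$ with $\sum_i \lambda_i = 1$, set $(\bar a, \bar \alpha) = \sum_i \lambda_i (a_i,\alpha_i)$. Every point of $D(\bar a, \bar \alpha)$ has the form $\bar a + t\bar\alpha = \sum_i \lambda_i (a_i + t\alpha_i)$, a convex combination of points lying in $D(a_i,\alpha_i)$, so $D(\bar a, \bar \alpha) \subset \conv(\cup_i D(a_i,\alpha_i))$, as needed.

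Next I would record the parameters of the $2(r-1)d + 1 = (r-1)(l+1) + 1$ input segments as points of $\mathcal{S}$ and apply the classical Tverberg theorem inside the $(2d-1)$-dimensional affine subspace containing $\mathcal{S}$. This produces a partition of the parameters into $r$ blocks $\mathcal{A}_1,\ldots,\mathcal{A}_r$ together with a common point $p^\star$ in the intersection of the convex hulls of the parameter blocks. Because the defining constraints of $\mathcal{S}$ (the inequalities $\alpha_i \ge 0$ and the linear equation $\alpha_1 + \cdots + \alpha_d = 1$) are preserved under convex combinations, $p^\star$ lies in $\mathcal{S}$, so $D(p^\star)$ is a genuine increasing segment of $\ell_1$-norm one; applying the containment verified above blockwise places this segment inside $\conv(\cup \mathcal{A}_j)$ for every $j$.

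The only point that needs watching is that $\mathcal{S}$ is a convex subset rather than a full affine space, so a priori the Tverberg output could escape the valid region; but since all of $\mathcal{S}$'s defining constraints are linear or half-space in nature, this is automatic. This mirrors the dimension-reduction step that powers the Helly results for zonotopes (Theorem \ref{theorem-zonotopes-strong}) and for $H$-convex sets (Theorem \ref{theorem-hconvex}), transported to the Tverberg setting.
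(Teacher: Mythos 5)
Your proof is correct and follows essentially the same route as the paper: the paper parametrizes increasing segments of $\ell_1$-norm one by pairs $(a,A)$ with $A$ a diagonal positive definite matrix satisfying a linear trace constraint (the parametrization from Theorem \ref{theorem-increasingdiameter}), which is affinely equivalent to your parametrization by an endpoint $a$ and a nonnegative direction vector $\alpha$ with $\sum_i\alpha_i = 1$, and both reduce to an application of Tverberg's theorem in a $(2d-1)$-dimensional affine space.
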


Since there are $2^{d-1}$ different diagonals in a hypercube, the result above gives an exact Tverberg theorem for $\ell_1$ norm.

\begin{corollary}[Exact Tverberg for $\ell_1$ diameter]
	Let $\mathcal{F}$ be a family of $(r-1)2^d d + 1$ segments in $\rr^d$, each with $\ell_1$-norm equal to one.  Then, there exists a partition of the family into $r$ parts $\mathcal{A}_1, \ldots, \mathcal{A}_r$ such that $\bigcap_{j=1}^r \conv (\cup \mathcal{A}_j)$ contains a segment of $\ell_1$-norm equal to one.
\end{corollary}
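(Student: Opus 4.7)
The plan is to reduce the corollary to the preceding Tverberg theorem for increasing segments by a pigeonhole argument over the $2^{d-1}$ ``diagonal classes'' of directions. For each $\epsilon \in \{-1,+1\}^d$, the coordinate reflection $R_\epsilon \colon x \mapsto (\epsilon_1 x_1, \ldots, \epsilon_d x_d)$ is a linear $\ell_1$-isometry, and a segment with direction $v$ becomes increasing after applying any $R_\epsilon$ with $\epsilon_i v_i \ge 0$ for every $i$. Because an increasing segment remains increasing upon reversal, the two reflections $R_\epsilon$ and $R_{-\epsilon}$ perform the same straightening task, so there are exactly $2^{d-1}$ relevant reflection classes, one per diagonal of the coordinate hypercube.

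First I would assign each segment $s \in \mathcal{F}$ with direction $v_s$ to an equivalence class $\{\epsilon,-\epsilon\}$ satisfying $\epsilon_i v_{s,i} \ge 0$ for every $i$, resolving the non-unique case (when $v_{s,i}=0$ for some $i$) arbitrarily. Since $|\mathcal{F}| = (r-1)2^d d + 1 = 2(r-1)d \cdot 2^{d-1} + 1$, the pigeonhole principle yields a class whose associated subfamily $\mathcal{G} \subset \mathcal{F}$ has $|\mathcal{G}| \ge 2(r-1)d + 1$. Let $R$ be a representative reflection of that class: then every segment of $R(\mathcal{G})$ is increasing with $\ell_1$-norm one, so the preceding Tverberg theorem for increasing segments, applied to any $2(r-1)d+1$ elements of $R(\mathcal{G})$, produces a partition of those elements (pulled back through $R$) into $r$ parts $\mathcal{A}'_1, \ldots, \mathcal{A}'_r \subset \mathcal{G}$ together with an increasing segment $t^*$ of $\ell_1$-norm one lying in $\bigcap_{j=1}^r \conv\bigl(\cup R(\mathcal{A}'_j)\bigr)$. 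Since $R$ commutes with convex hulls and intersections and preserves the $\ell_1$-norm, $R^{-1}(t^*)$ is a segment of $\ell_1$-norm one inside $\bigcap_{j=1}^r \conv(\cup \mathcal{A}'_j)$.

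Finally, I would extend the partial partition $\{\mathcal{A}'_j\}$ to a full partition $\{\mathcal{A}_j\}$ of $\mathcal{F}$ by distributing the leftover segments (any extras from $\mathcal{G}$ together with all of $\mathcal{F}\setminus \mathcal{G}$) arbitrarily among the $r$ parts. Enlarging each part only enlarges the convex hull of its union, so the segment $R^{-1}(t^*)$ still lies in $\bigcap_{j=1}^r \conv(\cup \mathcal{A}_j)$, as required. The only step requiring any care is the bookkeeping for segments whose direction has vanishing coordinates in the pigeonhole; the freedom to place such segments in any compatible class is what makes the tight count $(r-1)2^d d + 1$ match the hypothesis of the previous theorem exactly.
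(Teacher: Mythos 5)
Your proposal is correct and follows the same idea the paper (very briefly) gestures at: partition the $2^{d-1}$ diagonal direction classes of the hypercube, pigeonhole to find a class containing at least $2(r-1)d+1$ of the given segments, straighten that class by a coordinate reflection, apply the Tverberg theorem for increasing segments, and pull back. Your write-up is a careful fleshing-out of the paper's one-sentence justification, with the reflection bookkeeping and the extension of the partial partition spelled out.
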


We do not know if the dependence of $d$ in the corollary above must be exponential in the dimension.  It does imply a version for the standard Euclidean norm, where the conclusion guarantees a segment of length $d^{-1/2}$.

\subsection{Results using the topological Tverberg theorem}

The topological Tverberg theorem states the following. 

\begin{theoremp}[Topological Tverberg \cite{Barany:1981vh, Oza87, Volovikov:1996up}]
Let $r$ be a prime power, $d$ be a positive integer, $N=(r-1)(d+1)$, and $\Delta_N$ be an $N$-dimensional simplex.  Then, for every continuous function $f:\Delta_N \to \rr^d$, there exist $r$ points $x_1, \ldots, x_r$ in pairwise vertex-disjoint faces of $\Delta_N$ such that $f(x_1) = \ldots = f(x_r)$.	
\end{theoremp}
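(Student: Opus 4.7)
The plan is to use the configuration space / test map paradigm, which reduces the problem to an equivariant non-extension statement about spheres. First I would consider the $r$-fold $2$-wise deleted join
\[
\Delta_N^{*r}_{\Delta(2)} = \{(\lambda_1 x_1, \ldots, \lambda_r x_r) : x_i \in \sigma_i,\ \sigma_i \text{ vertex-disjoint faces of } \Delta_N,\ \lambda_i \ge 0,\ \textstyle\sum \lambda_i = 1\}.
\]
This is a simplicial complex carrying a free action of the cyclic group $\Z/r$ (permuting the $r$ coordinates), and a standard shelling / connectivity computation shows it is $(N-1)$-connected. Geometrically, a point of $\Delta_N^{*r}_{\Delta(2)}$ encodes a choice of $r$ pairwise disjoint faces with a point in each, weighted by a convex combination.

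Next I would promote the given map $f : \Delta_N \to \rr^d$ to a $\Z/r$-equivariant map
\[
F : \Delta_N^{*r}_{\Delta(2)} \longrightarrow (\rr^d)^{*r},\qquad (\lambda_1 x_1, \ldots, \lambda_r x_r) \longmapsto (\lambda_1 f(x_1), \ldots, \lambda_r f(x_r)),
\]
into the $r$-fold topological join, where $\Z/r$ again permutes coordinates. The thin diagonal $D = \{(\tfrac{1}{r} y, \ldots, \tfrac{1}{r} y)\}$ is precisely the fixed-point set, and a Tverberg partition corresponds exactly to a point of $F^{-1}(D)$. Assuming, for contradiction, that no such partition exists, $F$ avoids $D$, so radial projection away from $D$ yields a $\Z/r$-equivariant map
\[
\widetilde{F} : \Delta_N^{*r}_{\Delta(2)} \longrightarrow \bigl((\rr^d)^{*r} \setminus D\bigr) \simeq_{\Z/r} S^{(r-1)(d+1)-1} = S^{N-1},
\]
where the sphere carries the standard free $\Z/r$-representation given by the orthogonal complement of the diagonal in $(\rr^d)^r$.

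The key step is then to show that no such $\Z/r$-equivariant map exists from an $(N-1)$-connected free $\Z/r$-space to $S^{N-1}$. For $r$ prime this is a textbook application of equivariant obstruction theory: the primary obstruction lives in $H^N_{\Z/r}(\Delta_N^{*r}_{\Delta(2)}; \pi_{N-1}(S^{N-1}))$, and the generator of $H^N(B\Z/r; \Z)$ pulls back nontrivially because of the $(N-1)$-connectivity and the freeness of the action, contradicting the existence of $\widetilde{F}$. Equivalently, one invokes the Dold–type Borsuk–Ulam theorem for free $\Z/r$-actions. For $r = p^k$ a prime power, I would instead apply Volovikov's lemma, which says that any equivariant map from a sufficiently highly connected space with a free $(\Z/p)^k$-action (or just satisfying the relevant cohomological condition) to a sphere of the critical dimension must have nontrivial behavior; the constraint $N = (r-1)(d+1)$ matches exactly the dimension where the obstruction is nonzero.

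The main obstacle is the prime-power case: for $r$ prime the argument is the clean Borsuk–Ulam-type calculation with $\Z/r$-cohomology, but for $r = p^k$ with $k \ge 2$ the action of $\Z/r$ on the deleted join is free, yet one cannot directly reduce to a $\Z/p$-argument on the quotient. Volovikov's lemma bridges this gap using the Serre spectral sequence of the Borel construction and a careful connectivity input; verifying its hypotheses for $\Delta_N^{*r}_{\Delta(2)}$ and the target sphere is the delicate point, and this is precisely why the hypothesis ``$r$ is a prime power'' appears in the statement rather than ``$r$ arbitrary'' (indeed, for general $r$ the conclusion is now known to fail by the counterexamples of Frick and of Blagojević–Frick–Ziegler).
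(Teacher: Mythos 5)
The paper does not prove the topological Tverberg theorem; it states it with citations to \cite{Barany:1981vh} for $r$ prime and to \cite{Oza87, Volovikov:1996up} for $r$ a prime power, and then uses the theorem as a black box in the proof of the quantitative Tverberg theorem for ellipsoids. Your sketch is an accurate reconstruction of the standard configuration-space/test-map proof from those references: the identification $(\Delta_N)^{*r}_{\Delta(2)} \cong [r]^{*(N+1)}$ and its $(N-1)$-connectivity, the equivariant test map into the join $(\rr^d)^{*r}$, and the dimension count $N-1$ for the target sphere are all correct.

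One imprecision is worth flagging. You describe the sphere $\bigl((\rr^d)^{*r}\setminus D\bigr)\simeq S^{N-1}$ as carrying ``the standard free $\Z/r$-representation.'' That action is free on the sphere only when $r$ is prime. For $r=p^k$ with $k\ge 2$, decompose the regular representation $\rr^r$ of $\Z/p^k$ over $\mathbb{C}$ as $\bigoplus_{j=0}^{p^k-1}\chi_j$; the character $\chi_p$ lies in the orthogonal complement of the diagonal $\chi_0$, yet it is fixed pointwise by the order-$p$ subgroup generated by $g^{p^{k-1}}$, and the same non-free summand survives after tensoring with $\rr^d$. So the sphere is not a free $\Z/p^k$-space, which is exactly why a naive Dold-type argument breaks down for $k\ge 2$ and one must pass to the elementary abelian group $(\Z/p)^k$ acting on $[r]$ by translation and invoke {\"O}zaydin's or Volovikov's cohomological index argument. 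Your final paragraph recovers from this, so there is no logical gap in the plan, but the intermediate claim of freeness should not stand as written.
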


Tverberg's theorem is the case for affine functions $f$, which does not need the conditions on $r$.  The requirement that $r$ is a prime power necessary \cite{Mabillard:2015vx, Frick:2015wp, Blagojevic:2017bl}.  As is usual with applications of the topological Tverberg theorem, it is not clear if the counterexamples when $r$ is not a prime power can arise from the functions we construct below.

\begin{theorem}\label{theorem-tverberg-topological-ellipsoids}
	Let $r$ be a prime power, and $d$ be a positive integer.  Then, given a family $\mathcal{F}$ of $(r-1)\frac{d(d+3)}{2}+1$ ellipsoids of volume one, there exists a partition of $\mathcal{F}$ into $r$ parts $\mathcal{A}_1, \ldots, \mathcal{A}_r$ such that $\bigcap_{j=1}^r \conv(\cup \mathcal{A}_j)$ contains an ellipsoid of volume one. 
\end{theorem}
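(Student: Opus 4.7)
My plan is to mimic the geometric proof of the Tverberg theorem for ellipsoids from the previous subsection, but replace Tverberg's theorem by its topological version.  The key difficulty there was that the parametrization $(a,A) \mapsto a + AB_d$ of ellipsoids is not affine but a $\min$-concave ``Minkowski + rescaling'' operation, which forced us to normalize by $(\det C)^{1/d}$.  For the affine Tverberg theorem this obstruction was fatal when the center $a$ is not fixed, since $S^*(M)$ may fail to be convex.  The topological Tverberg theorem does not require an affine map, only a continuous one, which is exactly what we can build.

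First I would fix $B_d$ to be a Euclidean ball of volume one and set $n = \frac{d(d+3)}{2}-1$, so that ellipsoids of volume one are in bijection with the space $\mathcal{K} = \{(a,A) \in \rr^d \times \mathcal{P}_d : \det A = 1\}$ via $(a,A) \mapsto a + AB_d$ (using polar decomposition, as in Section \ref{example-matrices}).  I would then verify that $\mathcal{K}$ is homeomorphic to $\rr^n$: the multiplicative splitting $A \mapsto \left((\det A)^{1/d},\, A/(\det A)^{1/d}\right)$ identifies $\mathcal{P}_d \cong \rr_{>0} \times \{\det = 1\}$, so $\{\det = 1\}$ is homeomorphic to $\rr^{d(d+1)/2 - 1}$, and $\mathcal{K} \cong \rr^d \times \rr^{d(d+1)/2 - 1} = \rr^n$.

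Let $N = (r-1)(n+1) = (r-1)\frac{d(d+3)}{2}$, and label the vertices $v_1,\dots,v_{N+1}$ of $\Delta_N$ by the $N+1$ given ellipsoids, writing $E_i = a_i + A_i B_d$ with $(a_i,A_i) \in \mathcal{K}$.  I would define $f \colon \Delta_N \to \mathcal{K}$ on a point $x = \sum_i \lambda_i v_i$ by setting
\[
c(x) = \sum_i \lambda_i a_i, \qquad C(x) = \sum_i \lambda_i A_i, \qquad f(x) = \left(c(x),\; \frac{C(x)}{(\det C(x))^{1/d}}\right).
\]
Since each $A_i$ is positive definite and the weights $\lambda_i$ are nonnegative with sum one, $C(x) \in \mathcal{P}_d$ everywhere, so $f$ is continuous.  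Applying the topological Tverberg theorem (valid because $r$ is a prime power and $\mathcal{K} \cong \rr^n$), I obtain $r$ points $x_1,\dots,x_r$ in pairwise vertex-disjoint faces with a common image $(c^*,A^*) \in \mathcal{K}$.  Let $\mathcal{A}_j$ be the set of ellipsoids indexed by the vertices of the face containing $x_j$ (assigning leftover vertices arbitrarily among the $\mathcal{A}_j$'s at the end does no harm).

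Finally, I would check that $E^* := c^* + A^* B_d$ lies in $\conv(\cup \mathcal{A}_j)$ for every $j$ and has volume one.  Writing $x_j = \sum_{i \in I_j} \lambda_i v_i$, the identity
\[
c(x_j) + C(x_j) B_d = \sum_{i \in I_j} \lambda_i (a_i + A_i B_d) \subset \conv\!\left(\bigcup_{i \in I_j} E_i\right) = \conv(\cup \mathcal{A}_j)
\]
holds since Minkowski convex combinations of convex sets lie in the convex hull of their union.  Log-concavity of the determinant on $\mathcal{P}_d$ gives $\det C(x_j) \ge \prod_i (\det A_i)^{\lambda_i} = 1$, so rescaling $C(x_j)$ by $(\det C(x_j))^{-1/d} \le 1$ produces a (concentric) smaller ellipsoid; hence $E^* \subset c(x_j) + C(x_j) B_d \subset \conv(\cup \mathcal{A}_j)$, and $\vol(E^*) = \det A^* = 1$.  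The main obstacle I anticipate is merely the careful verification that $\mathcal{K}$ is homeomorphic to $\rr^n$ (so that topological Tverberg genuinely applies); once that is established, the construction of $f$ and the extraction of $E^*$ are direct analogues of the proofs of Theorem \ref{theorem-strong-matroid-affine} and Theorem \ref{theorem-tverberg-centrallysymmetric}.
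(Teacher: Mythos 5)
Your proposal is correct and follows essentially the same approach as the paper: parametrize ellipsoids of volume one by $\mathcal{K}=\{(a,A):\det A=1\}\cong\rr^{d(d+3)/2-1}$, extend the vertex assignment to a continuous map on the simplex by Minkowski-averaging and renormalizing by $(\det C)^{1/d}$, and invoke the topological Tverberg theorem together with the log-concavity of the determinant. The only superficial difference is that you exhibit the homeomorphism $\mathcal{K}\cong\rr^n$ directly via the multiplicative splitting, whereas the paper identifies $\mathcal{Q}_d$ with $\mathcal{S}_d$ first.
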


\begin{proof}
	Notice that the set $\mathcal{Q}_d$ of symmetric positive definite matrices with determinant one and the set $\mathcal{S}_d$ of symmetric positive definite matrices with sum of entries equal to one are homeomorphic.  It suffices to consider
	\begin{align*}
		\mathcal{S}_d & \to \mathcal{Q}_d \qquad & \mathcal{Q}_d  & \to \mathcal{S}_d \\
		A & \mapsto \frac{1}{\det (A)^{1/d}} A \qquad & B & \mapsto \frac{1}{\operatorname{sum}(B)}B.
	\end{align*}
	
	In the expression above, $\operatorname{sum}(B)$ denotes the sum of the entries of $B$.  Since $\mathcal{S}_d$ can be identified with $\rr^{d(d+1)/2 - 1}$, so can $\mathcal{Q}_d$.
	
	Now, given $k$ points $(a_1, A_1), \ldots, (a_k, A_k)$ in $\rr^d \times \mathcal{Q}_d$ and $\lambda_1, \ldots, \lambda_k$ the coefficients of a convex combination, we can construct
	\begin{align*}
		c & = \sum_{i=1}^k \lambda_i a_i \\
		C^* & = \sum_{i=1}^k \lambda_i A_i \\
		C & = \frac{1}{\det(C^*)^{1/d}}C^*
	\end{align*}

If $M$ is a convex set and $a_i + A_i B_d \subset M$ for all $i$, then $c + C^* B_d \subset M$.  However, since the determinant is log-concave in the space of positive definite matrices, we have that
\[
\det (C^*) = \det \left(\sum_{i=1}^k \lambda_i A_i\right) \ge \prod_{i=1}^k \det (A_i)^{\lambda_i} = 1.
\]

Therefore, $c + C B_d \subset M$.  We consider the pair $(c,C)$ to represent the convex combination of the points $(a_i, A_i)$ with coefficients $\lambda_i$, for $i=1, \ldots, k$.

Now, given our family of ellipsoids $\mathcal{F}$, we can associate to each of them a point in $\rr^d \times \mathcal{Q}_d \cong \rr^{d(d+3)/2 - 1}$.  For every point $q \in \rr^d \times \mathcal{Q}_d$, let $D(q)$ be the ellispoid in $\rr^d$ represented by this pair.  Let $N = (r-1)\frac{d(d+3)}{2}$.  The points generated on $\rr^d \times \mathcal{Q}_d$ can be considered as a function from the vertices of $\Delta_N$ to $\rr^d \times \mathcal{Q}_d$.  We can extend this to a function $f: \Delta_N \to \rr^d \times \mathcal{Q}_d$ as described above in the construction of $(c,C)$.  Notice that this function is continuous.  Moreover, given vertices $v_1, \ldots, v_k$ of $\Delta_N$ and coefficients $\lambda_1, \ldots, \lambda_N$ of a convex combination, the considerations above show that
\[
D\left(f\left(\sum_{i=1}^k \lambda_i v_i\right)\right) \subset \conv \left(\bigcup_{i=1}^k D(f(v_i))\right).
\]
Therefore, if we apply the topological Tverberg theorem to the points generated in $\rr^d \times \mathcal{Q}_d$, the partition induced on our ellipsoids satisfies the conditions we wanted.
\end{proof}

\section{Final remarks and open problems}\label{section-remarks}

We have mentioned that our geometric Helly-type results have $(p,q)$-type analogues.  We present one for ellipsoids to illustrate the meaning of these generalizations clearly.

\begin{theorem}
	Let $p, q, d$ be positive integers such that $p \ge q > d(d+3)/2$.  Then, there exists a value $c = c(p,q,d)$ such that the following holds.  Let $\mathcal{F}$ be a finite family of convex sets in $\rr^d$, each of which contains an ellipsoid of volume one.  Suppose that for every choice of $p$ sets of $\mathcal{F}$, there exists $q$ of them whose intersection contains an ellipsoid of volume one.  Then, there exists $c$ ellipsoids of volume one such that every set in $\mathcal{F}$ contains at least one of them.
\end{theorem}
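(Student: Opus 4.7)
The plan is to mimic the geometric Helly arguments from Section \ref{section-geometric}, replacing Helly's theorem by the classical Alon--Kleitman $(p,q)$-theorem \cite{Alon:1992gb} in the $\frac{d(d+3)}{2}$-dimensional parameter space of ellipsoids. Fix a reference ball $B \subset \rr^d$ of volume one centered at the origin, so that every ellipsoid in $\rr^d$ can be written uniquely as $a + AB$ with $(a,A) \in \rr^d \times \mathcal{P}_d$, and its volume is $\det(A)$. This parameter space has dimension exactly $d(d+3)/2$, which is the source of the threshold $q > d(d+3)/2$ in the hypothesis.

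For each $F \in \mathcal{F}$, I would consider
\[
S(F) = \bigl\{ (a,A) \in \rr^d \times \mathcal{P}_d \, : \, a + AB \subset F, \ \det(A) \ge 1 \bigr\}.
\]
The set $S(F)$ is convex and non-empty: the containment condition is convex by the one-line argument of Section \ref{example-matrices}, the condition $\det(A) \ge 1$ is convex on $\mathcal{P}_d$ because $\det$ is log-concave there, and non-emptiness is precisely the standing hypothesis that each $F$ contains an ellipsoid of volume one. The $(p,q)$-hypothesis on $\mathcal{F}$ translates verbatim to the family $\{S(F) : F \in \mathcal{F}\}$ in $\rr^{d(d+3)/2}$: if $q$ sets $F_{i_1}, \ldots, F_{i_q}$ share an ellipsoid $a + AB$ of volume one, then $(a,A)$ lies in every $S(F_{i_j})$.

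Since $q > d(d+3)/2$, the Alon--Kleitman theorem applies to the finite family $\{S(F) : F \in \mathcal{F}\}$ of convex sets in $\rr^{d(d+3)/2}$ (the hypothesis $q \ge \dim + 1$ of that theorem matches ours exactly) and yields a constant $c = c(p,q,d)$ together with piercing points $(a_1,A_1),\ldots,(a_c,A_c)$. Each piercing point gives an ellipsoid $a_i + A_i B$ of volume $\det(A_i) \ge 1$ contained in every $F$ with $(a_i, A_i) \in S(F)$. To obtain volume exactly one, replace $A_i$ by $\det(A_i)^{-1/d} A_i$; the associated concentric sub-ellipsoid has volume one and remains inside the same sets $F$. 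The resulting $c$ unit-volume ellipsoids pierce $\mathcal{F}$. I do not expect a serious obstacle in this argument; the one design choice worth noting is using $\det(A) \ge 1$ rather than $\det(A) = 1$, which keeps $S(F)$ full-dimensional in $\rr^{d(d+3)/2}$ and aligns the Alon--Kleitman dimensional threshold with the stated hypothesis. The same reduction immediately yields colorful, fractional, and matroid $(p,q)$-versions whenever such variants are known for convex sets in Euclidean space.
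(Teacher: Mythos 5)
The proposal is correct and takes essentially the approach the paper has in mind: lift ellipsoids to the $d(d+3)/2$-dimensional space $\rr^d\times\mathcal{P}_d$, observe that the sets $S(F)$ are convex and nonempty (convexity of the containment constraint from Section \ref{example-matrices}, log-concavity of $\det$ for the volume constraint), and apply Alon--Kleitman there. The small cleanup steps you flag---using $\det(A)\ge 1$ so the hypothesis $q>d(d+3)/2$ exactly matches the Alon--Kleitman threshold, and then rescaling each piercing point to a concentric unit-volume ellipsoid---are exactly what is needed and are carried out correctly.
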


\subsection{Computational aspects of the Banach-Mazur distance}
Let $K, M$ be two convex sets and $\varepsilon$ be a positive real number.  Consider
\[
BM(K,M,\varepsilon) = \{(a,A) \in \rr^d \times \mathcal{P}_d : a +AK \subset M \subset a + (1+\varepsilon)AK \}
\]

This is closely related to the Banach-Mazur distance, but includes the additional point $a$.

\begin{problem}
Let $K, M \subset \rr^d$ be convex sets and $\varepsilon > 0$.  Determine if the set $BM(K,M,\varepsilon)$ is necessarily contractible.
\end{problem}

If we only require the condition $a+AK \subset M$, the set $BM(K,M,\varepsilon)$ would be convex.  If we only require the condition $M \subset a+(1+\varepsilon)AK$, the set $BM(K,M,\varepsilon)$ would be contractible.  We don't know if the intersection of these two sets is always contractible.  A positive answer would imply analogues of Theorem \ref{theorem-algorithm-approx} for the Banach-Mazur metric and simultaneous approximation of a family of sets by a single ellipsoid.

\subsection{Fractional Helly theorems}

Recently, Holmsen proved that fractional Helly theorems, which generalize Katchalski and Liu's classical result \cite{Katchalski:1979vt}, are a purely combinatorial consequence of the colorful version of Helly's theorem \cite{Holmsen:2019tw}.  Therefore, by applying his results directly to our colorful theorems we obtain results such as the following:

\begin{theorem}[Fractional Helly for ellipsoids of volume one]
	For every positive real number $\alpha$, there exists a $\beta >0$ that depends only on $\alpha$ and $d$ such that the following holds.  Let $n = d(d+3)/2$, and $\mathcal{F}$ be a finite family of convex sets in $\rr^d$.  If there are at least $\alpha {{|\mathcal{F}|}\choose{n}}$ subfamilies $\mathcal{G} \subset \mathcal{F}$ of size $n$ whose intersection contains an ellipsoid of volume one, then there exists a subfamily $\mathcal{F}' \subset \mathcal{F}$ such that $|\mathcal{F}'| \ge \beta |\mathcal{F}|$ and whose intersection contains an ellipsoid of volume one.
\end{theorem}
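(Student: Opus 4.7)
The plan is to apply Holmsen's combinatorial transfer principle \cite{Holmsen:2019tw}, which shows that a colorful Helly theorem automatically yields a fractional Helly theorem with the same parameter. Given a finite family $\mathcal{F}$ of convex sets in $\rr^d$, I would associate to $\mathcal{F}$ the abstract simplicial complex $X(\mathcal{F})$ on vertex set $\mathcal{F}$ whose faces are exactly the subfamilies $\mathcal{G} \subseteq \mathcal{F}$ such that $\bigcap \mathcal{G}$ contains an ellipsoid of volume one. Because enlarging a subfamily can only shrink its intersection, this property is preserved under passing to subfamilies, so $X(\mathcal{F})$ is genuinely downward closed and is indeed a simplicial complex.

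Next I would verify the hypothesis of Holmsen's theorem, namely that $X(\mathcal{F})$ satisfies a colorful Helly property with parameter $n = d(d+3)/2$. This is exactly the content of Corollary \ref{corollary-colorfulhellyellipsoids}: given $n$ color classes $\mathcal{F}_1, \ldots, \mathcal{F}_n \subseteq \mathcal{F}$ such that every transversal $\{F_1, \ldots, F_n\}$ (with $F_i \in \mathcal{F}_i$) is a face of $X(\mathcal{F})$, some $\mathcal{F}_i$ is itself a face. No additional geometric argument is needed here, since Corollary \ref{corollary-colorfulhellyellipsoids} has already done the work.

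Holmsen's theorem then produces a constant $\beta = \beta(\alpha, n) > 0$, depending only on $\alpha$ and $n$ and hence only on $\alpha$ and $d$, such that whenever $X(\mathcal{F})$ contains at least $\alpha \binom{|\mathcal{F}|}{n}$ faces of size $n$, it contains a face of size at least $\beta |\mathcal{F}|$. Translating back, this face is a subfamily $\mathcal{F}' \subseteq \mathcal{F}$ with $|\mathcal{F}'| \geq \beta |\mathcal{F}|$ whose intersection contains an ellipsoid of volume one, which is the desired conclusion.

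The main obstacle is purely dictionary-translation: one has to carefully match the combinatorial formulation in Holmsen's paper to our geometric setup and confirm that the colorful hypothesis he requires is precisely what Corollary \ref{corollary-colorfulhellyellipsoids} supplies. Once that identification is in place, there is no additional geometric, analytic, or topological content to verify; every step beyond it is formal. The same template would in fact apply verbatim to produce fractional Helly analogues of every other colorful Helly theorem established earlier in the paper (for zonotopes, $H$-convex sets, enclosing ellipsoids, etc.), the parameter $n$ simply being replaced in each case by the corresponding colorful Helly number.
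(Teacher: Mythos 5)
Your argument is exactly the one the paper uses: in Section~\ref{section-remarks} the authors state that Holmsen's transfer principle \cite{Holmsen:2019tw} converts a colorful Helly theorem into a fractional Helly theorem, and they apply it directly to Corollary~\ref{corollary-colorfulhellyellipsoids} with $n = d(d+3)/2$. Your identification of the simplicial complex, its downward closure, and the colorful hypothesis matches the intended reduction precisely.
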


The result above implies a fractional Helly for the volume

\begin{theorem}[Fractional Helly for the volume]\label{theorem-fractinal-volume}
	For every positive real number $\alpha$, there exists a $\beta >0$ that depends only on $\alpha$ and $d$ such that the following holds.  Let $n = d(d+3)/2$, and $\mathcal{F}$ be a finite family of convex sets in $\rr^d$.  If there are at least $\alpha {{|\mathcal{F}|}\choose{n}}$ subfamilies $\mathcal{G} \subset \mathcal{F}$ of size $n$ whose intersection has volume greater than or equal to one, then there exists a subfamily $\mathcal{F}' \subset \mathcal{F}$ such that $|\mathcal{F}'| \ge \beta |\mathcal{F}|$ and whose intersection has volume greater than or equal to $d^{-d}$.
\end{theorem}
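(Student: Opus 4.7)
The plan is to bootstrap from the preceding fractional Helly theorem for ellipsoids of volume one, using John's theorem as the bridge. Recall that for any convex body $K \subset \rr^d$ of positive volume, the John ellipsoid $\mathcal{E}$ of $K$ satisfies $\mathcal{E} \subset K \subset d\mathcal{E}$ (after centering), so $\vol(\mathcal{E}) \ge d^{-d}\vol(K)$. In particular, any convex set in $\rr^d$ of volume at least one contains an ellipsoid of volume at least $d^{-d}$. This is exactly the same inequality invoked in the proof of Theorem~\ref{theorem-truecolorful-volumentric}, and it will absorb all of the quantitative loss in the final bound.

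First I would translate the volumetric hypothesis into an ellipsoidal one. Let $n = d(d+3)/2$, and suppose that at least $\alpha\binom{|\mathcal{F}|}{n}$ subfamilies $\mathcal{G} \subset \mathcal{F}$ of size $n$ satisfy $\vol(\cap \mathcal{G}) \ge 1$. By John's theorem applied to each such intersection, the same $n$-tuples $\mathcal{G}$ also satisfy the ellipsoidal condition that $\cap \mathcal{G}$ contains an ellipsoid of volume at least $d^{-d}$. Thus the ellipsoidal hypothesis holds with the same density parameter $\alpha$.

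Next I would invoke the fractional Helly theorem for ellipsoids of fixed volume. The statement proved immediately before the target theorem concerns ellipsoids of volume one, but by a global rescaling of $\rr^d$ by the factor $1/d$ (under which $d$-dimensional volumes scale by $d^{-d}$), this is equivalent to the analogous statement for ellipsoids of volume $d^{-d}$: both the family $\mathcal{F}$ and the witness ellipsoids transform consistently, and no property used in the proof is scale-sensitive. Applying this rescaled version with parameter $\alpha$ yields $\beta = \beta(\alpha,d) > 0$ and a subfamily $\mathcal{F}' \subset \mathcal{F}$ with $|\mathcal{F}'| \ge \beta|\mathcal{F}|$ such that $\cap \mathcal{F}'$ contains an ellipsoid of volume $d^{-d}$. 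In particular $\vol(\cap \mathcal{F}') \ge d^{-d}$, which is the required conclusion.

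The argument is a one-step bootstrap, so there is no genuine obstacle. The only quantitative cost is the $d^{-d}$ factor from John's theorem, which is absorbed by the weaker volumetric conclusion of the target statement; and the only mild subtlety is checking that rescaling preserves the fractional Helly conclusion, which is routine since the hypothesis counts $n$-tuples combinatorially and the Helly number $n = d(d+3)/2$ depends only on $d$.
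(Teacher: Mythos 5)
Your proposal is correct and takes essentially the same approach the paper intends: the paper offers no explicit proof beyond the remark that the fractional Helly theorem for ellipsoids of volume one implies the volumetric version, and your bootstrap via John's theorem together with the global rescaling by $1/d$ is exactly the intended argument, filling in the routine details.
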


\begin{problem}
Does Theorem \ref{theorem-fractinal-volume} hold with $n=2d$?  The volume in the conclusion may need to be relaxed to $d^{-cd}$ for some constant $c>0$.
\end{problem}

It is not even clear to us that the fractional Helly theorems for the volume must have a loss factor.  In other words, for $n$ large enough, yet only dependent on $d$, it could be the case that in Theorem \ref{theorem-fractinal-volume} we can conclude that $\vol (\cap \mathcal{F}') \ge 1$.

\subsection{Further restrictions on the sets}

Classical variations of Helly's theorem on different sets mean that we replace the convexity condition on the family.  For example, Helly's theorem for boxes in $\rr^d$ means that we seek to guarantee that a family of parallel boxes intersects.  For this case, it is sufficient to check that every pair of boxes intersects: the Helly number is two.  The possible intersection structure of families of boxes is well understood \cite{Eckhoff:1988eo, Eckhoff:1991ue}.

In contrast, our results for boxes have Helly number $2d$, which is optimal.  The key difference is that we seek to contain boxes of certain volume, instead of restricting the sets themselves.  This can also be noted in the difference of our results for $H$-convex sets and the Helly theorems by Boltyanski and Martini \cite{Boltyanski:2003ir}.

However, it is natural to ask if the exact quantitative Helly theorems can be improved if we restrict the family $\mathcal{F}$ further.  For example, if $\mathcal{F}$ is a family of parallel boxes, if the intersection of every $2d$ or fewer sets of $\mathcal{F}$ of them contains an ellipsoid of volume one, then so does the intersection of all of them.  This can be seen because the volume ratio between a box and its John ellipsoid is constant, so the theorem boils down to our quantitative theorem for volume of boxes.

  The arguments above show that our results can be improved by imposing conditions on the sets.  It does not seem evident which properties of the sets of $\mathcal{F}$ and those of the witness sets can be combined to give improved Helly numbers.
  
  \subsection{Optimality of quantitative Helly and Tverberg theorems}
  
  For Tverberg theorems, it is not clear if the results we obtain are optimal.  For example, determining the optimal dependence of the dimension on our volumetric and diameter Tverberg theorems is of particular interest.
  
  In our volumetric Helly theorems, such as Corollary \ref{corollary-simpleellipsoidcoloring}, we do not know the optimal number of color classes.  It is possible that it is $2d$, which would give an honest colorful version of the optimal bounds of the B\'ar\'any-Katchalski-Pach theorem.
  
Our methods lift our families of sets in $\rr^d$ to a convex set in a higher dimension, $\rr^l$.  However, not every convex set in $\rr^l$ can be obtained by this construction.  For instance, consider Example \ref{example-translates}.  The set of possible translating vectors $a$ in the construction of $S_K(M)$ is the Minkowski difference of $M$ and $K$.  If the sets of convex sets we can obtain as $S_K(M)$ is restricted, it is likely that their intersection patterns will satisfy additional properties and improve some of our results.

\section{Acknowledgments}

The authors would like to thank Emo Welzl for his helpful comments and for pointing out the algorithmic applications of our results in the framework of LP-type problems.


\newcommand{\etalchar}[1]{$^{#1}$}
\providecommand{\bysame}{\leavevmode\hbox to3em{\hrulefill}\thinspace}
\providecommand{\MR}{\relax\ifhmode\unskip\space\fi MR }
\providecommand{\MRhref}[2]{%
  \href{http://www.ams.org/mathscinet-getitem?mr=#1}{#2}
}
\providecommand{\href}[2]{#2}

\end{document}